\theoremstyle{plain}
\newtheorem{Thm}{Theorem}[section]
\newtheorem{Cor}[Thm]{Corollary}
\newtheorem{Prop}[Thm]{Proposition}
\newtheorem{Lem}[Thm]{Lemma}
\newtheorem{Cl}[Thm]{Claim}
\newtheorem{Thm'}{Theorem}[section]
\newtheorem{Cor'}[Thm']{Corollary}
\newtheorem{Prop'}[Thm']{Proposition}
\newtheorem{Lem'}[Thm']{Lemma}
\newtheorem{Cl'}[Thm']{Claim}
\theoremstyle{definition}
\newtheorem{Def}[Thm]{Definition}
\newtheorem{Emp}[Thm]{}
\newtheorem{Not}[Thm]{Notation}
\newtheorem{Def'}[Thm']{Definition}
\newtheorem{Rem'}[Thm']{Remark}
\newtheorem{Rem1'}[Thm']{Remarks}
\newtheorem{Emp'}[Thm']{}
\newtheorem{Ex'}[Thm']{Example}
\newtheorem{Exs'}[Thm']{Examples}
\newtheorem{Con'}[Thm']{Construction}
\newtheorem{Not'}[Thm']{Notation}
\newtheorem{Q'}[Thm']{Question}
\numberwithin{equation}{section}
\newcommand{\ql}{\B{Q}_l}
\newcommand{\qlbar}{\overline{\B{Q}_l}}
\newcommand{\om}{\omega}
\newcommand{\ov}{\overline}
\newcommand{\un}{\underline}
\newcommand{\fq}{\B{F}_q}
\newcommand{\B}[1]{\mathbb#1}
\newcommand{\cal}[1]{\mathcal{#1}}
\newcommand{\form}[1]{(\ref{Eq:#1})}
\newcommand{\C}[1]{\cal#1}
\newcommand{\isom}{\overset {\thicksim}{\to}}
\newcommand{\lra}{\longrightarrow}
\newcommand{\hra}{\hookrightarrow}
\newcommand{\wt}{\widetilde}
\newcommand{\Gm}{\Gamma}
\newcommand{\p}{\partial}
\newcommand{\la}{\lambda}
\newcommand{\rl}[1]{Lemma \ref{L:#1}}
\newcommand{\rcl}[1]{Claim \ref{C:#1}}
\newcommand{\rp}[1]{Proposition \ref{P:#1}}
\newcommand{\re}[1]{\ref{E:#1}}
\newcommand{\rco}[1]{Corollary \ref{C:#1}}
\newcommand{\rt}[1] {Theorem \ref{T:#1}}
\newcommand{\rd}[1]{Definition \ref{D:#1}}
\newcommand{\sm}{\smallsetminus}
\newcommand{\Mat}{\operatorname{Mat}}
\newcommand{\pr}{\operatorname{pr}}
\newcommand{\Spec}{\operatorname{Spec}}
\newcommand{\Tr}{\operatorname{Tr}}
\newcommand{\cl}{\operatorname{cl}}
\newcommand{\Bl}{\operatorname{Bl}}
\newcommand{\Irr}{\operatorname{Irr}}
\newcommand{\ram}{\operatorname{ram}}
\newcommand{\res}{\operatorname{res}}
\newcommand{\Id}{\operatorname{Id}}
\newcommand{\End}{\operatorname{End}}
\begin{document}

\title[Intersection of a correspondence with a graph of Frobenius]%
{Intersection of a correspondence with a graph of Frobenius}

\author{Yakov Varshavsky}
\address{Institute of Mathematics\\
Hebrew University\\
Givat-Ram, Jerusalem,  91904\\
Israel}
\email{vyakov@math.huji.ac.il }

\thanks{The work was supported by
THE ISRAEL SCIENCE FOUNDATION (Grant No. 1017/13)}
\date{\today}

\begin{abstract}
The goal of this note is to give a short geometric proof of a theorem
of Hrushovski \cite{Hr} asserting that an intersection of a correspondence
with a graph of a sufficiently large power of Frobenius is non-empty.
\end{abstract}
\maketitle

\tableofcontents

\section*{Introduction}

The goal of this work is to give a short geometric proof of the 
following theorem of Hrushovski \cite[Cor 1.2]{Hr}, which has
applications, for example, to algebraic dynamics \cite{Fa}, group theory \cite{BS} and
algebraic geometry \cite{EM}.

Let $\fq$ be a finite field, $\B{F}$ an algebraic closure of $\fq$, and
$X^0$ a scheme of finite type over $\B{F}$, defined over $\fq$. We denote by
$\phi_q=\phi_{X^0,q}:X^0\to X^0$ the geometric Frobenius morphism over $\fq$,
and by $\Gm^0_{q^n}\subset X^0\times X^0$ the graph of
$\phi_{q^n}=(\phi_q)^n$, where the product here and later is taken over
$\B{F}$. Explicitly, $\Gm^0_{q^n}$ is the image of
the morphism $(\Id,\phi_{q^n}):X^0\to X^0\times X^0$. 

\begin{Thm} \label{T:main}
Let $c^0=(c^0_1,c^0_2):C^0\to X^0\times X^0$ be a morphism of
schemes of finite type over $\B{F}$ such that $X^0$ and $C^0$
be irreducible, both $c^0_1$ and $c^0_2$ are dominant, and $X^0$
is defined over $\fq$. 

Then for every sufficiently large $n$,  the preimage
$(c^0)^{-1}(\Gm^0_{q^n})$ is non-empty.
\end{Thm}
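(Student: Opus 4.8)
The plan is to realize $(c^0)^{-1}(\Gamma^0_{q^n})$ as the fixed‑point scheme of a correspondence and to evaluate the associated Lefschetz--Verdier trace, showing that for $n\gg 0$ this trace is dominated by the contribution of the top cohomology group --- where the geometric Frobenius scales by $q^{nd}$, $d:=\dim X^0$ --- and is therefore nonzero, which already forces the fixed locus to be nonempty.

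First I would carry out some reductions. Set $Z_n:=(c^0)^{-1}(\Gamma^0_{q^n})=\{y\in C^0:c^0_2(y)=\phi_{q^n}(c^0_1(y))\}$. If we replace $C^0$ by a dense open subset, or replace $X^0$ by an $\fq$‑rational dense open $V$ and $C^0$ by $(c^0_1)^{-1}(V)\cap(c^0_2)^{-1}(V)$, then $Z_n$ only shrinks and both legs stay dominant (the second replacement works because restricting a dominant morphism out of an irreducible scheme to a dense open preserves dominance); so it suffices to prove the theorem after such replacements. A Bertini‑type argument --- repeatedly cut $C^0$ by a general hyperplane section, checking via an incidence‑variety dimension count that $c^0_1$ and $c^0_2$ remain dominant as long as their generic fibres are positive‑dimensional, and using Bertini irreducibility --- reduces us to $\dim C^0=\dim X^0=d$. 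Passing to smooth loci, we may further assume $X^0$ (still over $\fq$) and $C^0$ are smooth of dimension $d$, with $c^0_1,c^0_2$ dominant, hence generically finite.

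Since $X^0$ and $C^0$ are now smooth of the same dimension, $(c^0_i)^!\overline{\mathbb{Q}}_\ell\cong\overline{\mathbb{Q}}_\ell$ canonically, so $c^0$ carries a tautological cohomological correspondence on the constant sheaf and thereby acts on $\coh^*_c(X^0,\overline{\mathbb{Q}}_\ell)$; call this ($n$‑independent) operator $u_0$. Next compactify: choose an open immersion $j\colon X^0\hookrightarrow\bar X$ with $\bar X$ proper over $\fq$, and extend $c^0$ to a proper correspondence $\bar c\colon\bar C\to\bar X\times\bar X$ with $\bar C$ a compactification of $C^0$. Composing $\bar c$ with the graph of $\phi_{q^n}$ (in the direction for which the fixed‑point scheme equals $Z_n$ and the induced operator is $\phi_{q^n}^*\circ u_0$, where $\phi_{q^n}^*=(\phi_q^*)^n$ is the geometric Frobenius) and applying the Lefschetz--Verdier trace formula with the sheaf $\mathcal F:=j_!\overline{\mathbb{Q}}_\ell$ (so $\coh^*(\bar X,\mathcal F)=\coh^*_c(X^0,\overline{\mathbb{Q}}_\ell)$) yields
\[
\sum_i(-1)^i\operatorname{Tr}\!\bigl(\phi_{q^n}^*\circ u_0\mid\coh^i_c(X^0,\overline{\mathbb{Q}}_\ell)\bigr)=\sum_{\beta}\operatorname{LT}_\beta ,
\]
the sum being over the connected components $\beta$ of the fixed‑point scheme of the composed correspondence, which contains $Z_n$ together with possible extra components supported over $\bar C\smallsetminus C^0$. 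The left‑hand side is easy to estimate: on $\coh^{2d}_c(X^0,\overline{\mathbb{Q}}_\ell)\cong\overline{\mathbb{Q}}_\ell(-d)$ the Frobenius acts by $q^{nd}$ and $u_0$ by $\deg(c^0_1)\ge 1$, while on $\coh^i_c$ with $i<2d$ Deligne's weight bound forces the eigenvalues of $\phi_{q^n}^*$ to have absolute value $\le q^{ni/2}\le q^{n(2d-1)/2}$, so (up to a polynomial‑in‑$n$ factor from Jordan blocks) those terms are $O(q^{n(d-1/2)})=o(q^{nd})$. Hence the left‑hand side equals $\deg(c^0_1)\,q^{nd}+o(q^{nd})$, nonzero for all $n\gg 0$.

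The main obstacle --- and the technical heart --- is to conclude that $Z_n\neq\emptyset$ rather than merely that the full fixed‑point scheme is nonempty, i.e. to show the boundary components $\beta$ lying over $\bar C\smallsetminus C^0$ contribute negligibly. Here one uses the special nature of Frobenius: since $\phi_{q^n}$ has vanishing differential, the composed correspondence is \emph{contracting} near each of its fixed points, so by the theory of local terms of contracting correspondences every $\operatorname{LT}_\beta$ equals its explicit naive local term; a naive local term vanishes at a fixed point where the stalk of $\mathcal F$ is zero (so fixed points lying over $\bar X\smallsetminus X^0$ contribute nothing), and the remaining boundary fixed points lie over the at most $(d-1)$‑dimensional scheme $\bar C\smallsetminus C^0$, so their total is bounded by the schematic intersection of $\Gamma_{\phi_{q^n}}$ with a fixed cycle of dimension $<d$, hence is $O(q^{n(d-1)})=o(q^{nd})$ for $n\gg 0$. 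Combining, $\sum_{\beta\subseteq Z_n}\operatorname{LT}_\beta=\deg(c^0_1)\,q^{nd}+o(q^{nd})\neq 0$ for $n\gg 0$, so $Z_n\neq\emptyset$. I expect the step requiring most care to be exactly this boundary/local‑term analysis (and, behind it, the set‑up of the contracting‑correspondence machinery), the cohomological estimate itself being routine.
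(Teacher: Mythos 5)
Your proposal follows the ``Deligne‑conjecture'' route --- Lefschetz--Verdier plus the theory of local terms of contracting correspondences (Fujiwara, Varshavsky) --- and is genuinely different from the route the paper takes. The paper deliberately avoids local terms altogether: it reduces to $c^0$ a closed embedding, makes the boundary $\p X$ locally invariant by a blow‑up (Section~2), replaces $c$ by $c^{(m)}$ so that $c$ becomes contracting near $\p X$, passes to Pink's blow‑up $\wt Y=\Bl_{\cup(X_i\times X_i)}(X\times X)$, and proves a \emph{global} intersection‑number identity $[\wt C]\cdot[\wt\Gamma_{q^n}]=\sum_{J}(-1)^{|J|}\Tr((\phi^*_q)^n\circ H^*(c_J))$ whose asymptotics are read off from Deligne's purity. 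The contracting condition serves there only to keep $\wt C\cap\wt\Gamma_{q^n}$ inside $\pi^{-1}(X^0\times X^0)$, so nonvanishing of a single intersection number finishes the proof. Your first two paragraphs are compatible with this and your cohomological estimate of the ``interior'' contribution is the same as the paper's (the top term is $\deg(c_1^0)\,q^{nd}$, lower terms are $O(n^{?}q^{n(d-1/2)})$ by purity and a Jordan‑block bound).

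The gap is in the boundary analysis, and it is a genuine one. First, ``$\phi_{q^n}$ has vanishing differential, hence the composed correspondence is contracting near each of its fixed points, hence every local term is the naive one'' is not a formal observation: the contracting condition needed for the local term to equal the naive local term is a \emph{schematic} condition on ideal sheaves near the boundary (this is Definition~1.10 of the paper, following \cite{Va}), and one needs $\p X$ to already be locally invariant and $n$ large compared with $\ram(c_2,\p X)$ (Lemma~1.14). Ensuring these hypotheses is exactly what §§1--2 of the paper are for; one cannot just invoke vanishing of $d\phi_{q^n}$. Moreover the fixed‑point scheme of the compactified correspondence can have positive‑dimensional components supported in the boundary, where the naive‑local‑term formalism needs extra care. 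Second, your bound ``the remaining boundary contributions are $O(q^{n(d-1)})$ because they are supported over the $(d-1)$‑dimensional $\bar C\smallsetminus C^0$'' is unjustified: the points you are trying to count are precisely an intersection of a $(<d)$‑dimensional correspondence with $\Gamma_{q^n}$, i.e.\ a problem of exactly the same shape as the theorem you are proving, and there is no Lang--Weil estimate that directly applies (the $y$ with $c_2(y)=\phi_{q^n}(c_1(y))$ are not $\B F_{q^n}$‑rational points of $\bar C\smallsetminus C^0$). The cleanest fix is to first reduce (as the paper does in Claim~5.2, Steps~1--3) to the case where $c^0$ is a \emph{closed embedding} and $\bar C$ is the closure of $C^0$ inside $\bar X\times\bar X$: then any boundary fixed point necessarily maps into $(\p X)\times(\p X)$ (since $X^0$ is $\phi_q$‑stable), so the stalk of $j_!\qlbar$ there is zero and --- granting the local‑term theorem --- the boundary contribution is identically zero, with no asymptotic estimate needed. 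As written, however, your proposal neither performs this reduction nor supplies a correct bound, so the boundary step does not go through.
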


\rt{main} has the following corollary.

\begin{Cor} \label{C:sapir}
In the assumptions of \rt{main}, the union $\cup_n
(c^0)^{-1}(\Gm^0_{q^n})$ is Zariski dense in $C^0$.
\end{Cor}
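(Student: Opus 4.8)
The plan is to deduce Corollary~\ref{C:sapir} directly from Theorem~\ref{T:main} by a standard ``spreading out over open subsets'' argument, so that essentially no new geometric input is needed. Let $Z\subseteq C^0$ denote the Zariski closure of $\bigcup_n (c^0)^{-1}(\Gm^0_{q^n})$, equipped with the reduced induced structure, and assume for contradiction that $Z\neq C^0$. Since $C^0$ is irreducible, $U:=C^0\sm Z$ is then a non-empty open subscheme of $C^0$.

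The first step is to check that the restricted correspondence $c^0|_U=(c^0_1|_U,c^0_2|_U)\colon U\to X^0\times X^0$ again satisfies all the hypotheses of Theorem~\ref{T:main}. The scheme $U$ is of finite type over $\B{F}$, being an open subscheme of such; it is irreducible, being a non-empty open subscheme of the irreducible scheme $C^0$; the scheme $X^0$ is unchanged, hence still irreducible and defined over $\fq$. The only point requiring a word is that each leg $c^0_i|_U$ is still dominant, and this is immediate from the continuity of $c^0_i$ together with the density of $U$ in $C^0$: one has $\overline{c^0_i(U)}\supseteq\overline{c^0_i(\overline{U})}=\overline{c^0_i(C^0)}=X^0$.

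The second step is to invoke Theorem~\ref{T:main} for $c^0|_U$: for all sufficiently large $n$ the preimage $(c^0|_U)^{-1}(\Gm^0_{q^n})$ is non-empty. But $(c^0|_U)^{-1}(\Gm^0_{q^n})=U\cap(c^0)^{-1}(\Gm^0_{q^n})$, and by the very definition of $Z$ this set is contained in $U\cap Z=\emptyset$. This contradiction forces $Z=C^0$, which is exactly the assertion that $\bigcup_n (c^0)^{-1}(\Gm^0_{q^n})$ is Zariski dense in $C^0$.

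As for difficulty, I do not expect any real obstacle: the corollary is a formal consequence of the theorem, and the only thing to be careful about is confirming that dominance of the two legs survives restriction to a dense open subset, which is the elementary topological computation above. If desired, one can record the slightly stronger statement that falls out of the same argument: every non-empty open $U\subseteq C^0$ meets $(c^0)^{-1}(\Gm^0_{q^n})$ for \emph{all} sufficiently large $n$, not merely for one value of $n$.
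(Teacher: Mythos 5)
Your argument is the same as the paper's: take the complement $U = C^0\sm Z$, verify it again satisfies the hypotheses of Theorem~\ref{T:main} (irreducibility and dominance of the legs persist on a dense open), and apply the theorem to get a contradiction. The only difference is that you spell out the elementary check of dominance, which the paper leaves implicit.
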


\begin{proof}[Proof of \rco{sapir}]
Let $Z\subset C^0$ be the Zariski closure of $\cup_n
(c^0)^{-1}(\Gm^0_{q^n})\subset C^0$. If $Z\neq C^0$, then $C':=C^0\sm
Z$ is Zariski dense in $C^0$, thus $c^0|_{C'}:C'\to X^0\times X^0$
satisfies all the assumptions of \rt{main}. Hence for every
sufficiently large $n$, we have $(c^0)^{-1}(\Gm^0_{q^n})\cap
C'\neq\emptyset$, contradicting our choice of $Z$.
\end{proof}

Let $f:X^0\to X^0$ be a morphism.
Following Borisov and Sapir \cite{BS}, we call a point $x\in X^0(\B{F})$ to be
{\em $f$-quasi-fixed}, if $f(x)=(\phi_q)^n(x)$ for some $n\in\B{N}$. The following result
follows immediately from \rco{sapir}.

\begin{Cor} \label{C:qfixed}
Let $X^0$ be an irreducible scheme of finite type over $\B{F}$, defined over $\fq$, and let
$f:X^0\to X^0$ be a dominant morphism. Then the set of $f$-quasi-fixed points is Zariski dense.
\end{Cor}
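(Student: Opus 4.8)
The plan is to derive \rco{qfixed} directly from \rco{sapir} by taking the correspondence to be the graph of $f$. Concretely, I would set $C^0 := X^0$ and let $c^0 = (c^0_1, c^0_2) := (\Id, f)\colon X^0 \to X^0 \times X^0$ (the product over $\B{F}$), so that $C^0$ is realized as the graph of $f$ inside $X^0 \times X^0$.

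First I would check that this $c^0$ satisfies the hypotheses of \rt{main}, hence of \rco{sapir}: $X^0$ is irreducible and defined over $\fq$ by assumption; $C^0 = X^0$ is irreducible; $c^0_1 = \Id$ is dominant, being an isomorphism; and $c^0_2 = f$ is dominant by hypothesis. Note that \rt{main} imposes no rationality condition on $C^0$ or on the maps $c^0_i$, so it does not matter whether $f$ itself is defined over $\fq$.

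Next I would unwind the preimage $(c^0)^{-1}(\Gm^0_{q^n})$. Since $\Gm^0_{q^n}$ is the image of $(\Id, \phi_{q^n})$, a point $x \in X^0(\B{F})$ lies in $(c^0)^{-1}(\Gm^0_{q^n})$ exactly when $(x, f(x)) = (x, \phi_{q^n}(x))$, i.e.\ when $f(x) = (\phi_q)^n(x)$. Thus for every $n$ the scheme $(c^0)^{-1}(\Gm^0_{q^n})$ has only $f$-quasi-fixed points among its $\B{F}$-points, so $\bigcup_n (c^0)^{-1}(\Gm^0_{q^n})$ is contained in the locus of $f$-quasi-fixed points. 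By \rco{sapir} this union is Zariski dense in $C^0 = X^0$; since everything is of finite type over the algebraically closed field $\B{F}$, passing to $\B{F}$-points still yields a dense subset, and therefore the (a priori larger) set of $f$-quasi-fixed points is Zariski dense in $X^0$.

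Because all the content is already packaged in \rt{main} and \rco{sapir}, this derivation is essentially formal; the only step that needs any care is the bookkeeping above — correctly identifying $(c^0)^{-1}(\Gm^0_{q^n})$ with the quasi-fixed locus for the exponent $n$, and confirming that the graph of a dominant morphism is a legitimate correspondence in the sense of \rt{main}. A minor point worth noting is the convention for $\B{N}$: whether or not $0 \in \B{N}$ only changes whether honest fixed points of $f$ are counted as quasi-fixed, and has no effect on the density conclusion.
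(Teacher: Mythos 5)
Your proposal is correct and is exactly the intended derivation: the paper asserts \rco{qfixed} follows immediately from \rco{sapir}, and the only natural way to do this is to apply \rco{sapir} to the graph correspondence $c^0=(\Id,f)$, observing that $(c^0)^{-1}(\Gm^0_{q^n})$ is precisely the locus where $f(x)=(\phi_q)^n(x)$. Your checks of the hypotheses (in particular that no $\fq$-rationality of $f$ is needed) and the passage from density of the subschemes to density of the set of $\B{F}$-points are all in order.
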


Let $X^0$ be a scheme of finite type over $\B{F}$ and let $f:X^0\to X^0$ be a morphism. We say that $x\in X(\B{F})$ is {\em $f$-periodic}, if $f^m(x)=x$ for some $m\in\B{N}$.
\rco{qfixed} implies the following result (see \cite[Prop 5.5]{Fa}).

\begin{Cor} \label{C:periodic}
Let $X^0$ be a scheme of finite type over $\B{F}$, and let
$f:X^0\to X^0$ be a dominant morphism. Then the set of $f$-periodic points is
Zariski dense.
\end{Cor}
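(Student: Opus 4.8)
The plan is to deduce \rco{periodic} from \rco{qfixed}: first I would reduce to the situation covered by \rco{qfixed} — irreducible base, defined over a finite field — and then observe that, in that situation, every quasi-fixed point is automatically periodic, because a self-morphism defined over a finite field commutes with the corresponding power of Frobenius.

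\emph{Reduction.} Since $f$ is dominant, $\bigcup_i\ov{f(X_i)}=X^0$ for the irreducible components $X_1,\dots,X_s$ of $X^0$, and since each $X_i$ is irreducible this forces $f$ to permute the components, say by a permutation $\si$, with $f|_{X_j}\colon X_j\to X_{\si(j)}$ dominant. Replacing $f$ by $f^r$, where $r$ is the order of $\si$ — note that a point periodic for $f^r$ is periodic for $f$ — we may assume $f$ carries each $X_j$ into itself, and then $g_j:=f|_{X_j}\colon X_j\to X_j$ is dominant, being a composition of dominant maps of irreducible varieties. As the set of $g_j$-periodic points is contained in the set of $f$-periodic points and $\bigcup_j X_j=X^0$, it suffices to show, for each $j$, that the $g_j$-periodic points are dense in $X_j$. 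Finally, $X_j$ and $g_j$ are of finite type over $\B{F}=\ov{\fq}$, hence both are defined over $\fqk$ for a suitable $k$; since $X_j$ is still irreducible over $\ov{\fqk}$, the pair $(X_j,g_j)$ satisfies the hypotheses of \rco{qfixed} with base field $\fqk$.

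\emph{Quasi-fixed implies periodic.} By \rco{qfixed} applied to $(X_j,g_j)$ over $\fqk$, the $g_j$-quasi-fixed points are Zariski dense in $X_j$, so it is enough to show every $g_j$-quasi-fixed point $y\in X_j(\B{F})$ is $g_j$-periodic. Write $g_j(y)=\psi(y)$ with $\psi=(\phi_{q^k})^n=\phi_{q^{kn}}$ for some $n\geq1$. The key point is that $g_j$, being defined over $\fqk$ and hence over $\B{F}_{q^{kn}}$, commutes with $\phi_{q^{kn}}$; consequently, by induction on $m$ one gets $g_j^{\,m}(y)=\psi^m(y)$ for all $m\geq1$, the inductive step being $g_j^{\,m+1}(y)=g_j(\psi^m(y))=\psi^m(g_j(y))=\psi^m(\psi(y))=\psi^{m+1}(y)$. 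Since every $\B{F}$-point of $X_j$ lies in $X_j(\B{F}_{q^{kN}})$ for all large $N$, we have $\psi^m(y)=\phi_{q^{knm}}(y)=y$ for suitable $m$, hence $g_j^{\,m}(y)=y$ and $y$ is $g_j$-periodic. This proves the corollary.

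I expect the only substantive ingredient, beyond citing \rco{qfixed}, to be the commutation observation in the last paragraph, which turns the single relation $g_j(y)=\phi_{q^{kn}}(y)$ into $g_j^{\,m}(y)=\phi_{q^{knm}}(y)$ for every $m$ and then uses that every $\B{F}$-point is fixed by some power of Frobenius. The rest is routine, but the reduction does require genuine care: \rco{qfixed} presupposes both irreducibility and a field of definition, whereas \rco{periodic} assumes neither, so one must pass to the irreducible components and descend to a finite field before \rco{qfixed} can be invoked.
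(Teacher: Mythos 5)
Your proof is correct and follows essentially the same route as the paper's: reduce to an irreducible $X^0$ by passing to a power of $f$ that stabilizes the components, descend to a finite field, and then observe that a quasi-fixed point is automatically periodic because $f$ commutes with Frobenius and every $\B{F}$-point is fixed by some power of Frobenius. The paper phrases the last step slightly more compactly (choosing $m$ so that $x\in X^0(\B{F}_{q^m})$ and noting $f^m(x)=(\phi_q)^{mn}(x)=x$), but the content is identical.
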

\begin{proof}[Proof of \rco{periodic}]
Since some power $f^m$ stabilizes all irreducible components of $X^0$, we can assume that
$X^0$ is irreducible. Replacing $\fq$ by its finite extension, we can assume that both $X^0$ and $f$ are defined over $\fq$. Then every $f$-quasi-fixed point is $f$-periodic. Indeed,
if $x\in X^0(\B{F}_{q^m})$ satisfies $f(x)=(\phi_q)^n(x)$, then  $f^m(x)=(\phi_q)^{mn}(x)=x$.
Thus the assertion follows from \rco{qfixed}.
\end{proof}

Our proof of \rt{main} goes as follows. First we reduce to the case when
$X^0$ is quasiprojective, $\dim C^0=\dim X^0$, and $c^0$ is a
closed embedding.

Then we choose a compactification $X$ of $X^0$ defined over $\fq$, and
a closed embedding $c=(c_1,c_2):C\to X\times X$, whose restriction
to $X^0\times X^0$ is $c^0$.  We say that $\p X:=X\sm X^0$ is
{\em locally $c$-invariant}, if every point $x\in X$ has an open neighborhood
$U\subset X$ such that $c_2^{-1}(\p X\cap U)\cap
c_1^{-1}(U)\subset c_1^{-1}(\p X\cap U)$.

For every $m\in\B{N}$, we denote by
$c^{(m)}:C\to X\times X$ the map $((\phi_q)^m\circ c_1,c_2)$.
The main step of our argument is to reduce to the situation where
$\p X:=X\sm X^0$ is locally $c^{(m)}$-invariant for all $m$.
Namely, we show that this happens after we replace $X^0$ by its
open subscheme and $X$ by a certain blow-up.

Next, using de Jong theorem on alterations, we can further assume
that $X$ is smooth, and the boundary $\p X$ is a union of smooth
divisors $X_i,i\in I$ with  normal crossings, defined over $\fq$.

Following Pink (\cite{Pi}), we consider the blowup $\wt{Y}:=\Bl_{\cup_I(X_i\times
X_i)}(X\times X)$, and denote by $\pi:\wt{Y}\to X\times X$ the projection map. Let $\Gm_{q^n}\subset X\times X$
be the graph of $\phi_{X,q^n}$, and denote by $\wt{C}\subset\wt{Y}$ and $\wt{\Gm}_{q^n}\subset \wt{Y}$
the strict preimages of $C$ and $\Gm_{q^n}$, respectively.

Replacing $c$ by $c^{(m)}$ for a sufficiently large $m$, we can get to the situation where $\wt{C}\cap\wt{\Gm}_{q^n}\subset\pi^{-1}(X^0\times X^0)$. Then
$\wt{C}\cap\wt{\Gm}_{q^n}=(c^0)^{-1}(\Gm_{q^n}^0)$, so it remains to show that
$\wt{C}\cap\wt{\Gm}_{q^n}\neq\emptyset$. Note that $\wt{Y}$ is smooth, so it
suffices to show that the intersection number
$[\wt{C}]\cdot[\wt{\Gm}_{q^n}]$ is non-zero.

For every subset $J\subset I$, we denote by $X_J$ the intersection
$\cap_{i\in J}X_i$. In particular, $X_{\emptyset}=X$.
For every $i$, the correspondence $c$ induces an endomorphism
$H^i(c)=(c_2)_*\circ (c_1)^*:H^i(X,\ql)\to H^i(X,\ql)$. In particular,
$H^{2d}(c)$ is the
multiplication by $\deg(c_1)\neq 0$. More generally,
for every $J\subset I$ and $i$, the correspondence $c$
induces an endomorphism $H^i(c_J):H^i(X_J,\ql)\to H^i(X_J,\ql)$
(compare \cite[Ch IV]{Laf}).

Then for every $n\in\B{N}$ we have the equality
\begin{equation} \label{Eq:altsum}
[\wt{C}]\cdot[\wt{\Gm}_{q^n}]=\sum_{J\subset
I}(-1)^{|J|}\sum_{i=0}^{2(d-|J|)}(-1)^i\Tr((\phi^*_q)^n\circ H^i(c_J)).
\end{equation}

Choose an embedding $\iota:\ql\hra\B{C}$. By a theorem of
Deligne, every eigenvalue $\la$ of
$\phi^*_q\in\End H^i(X_J,\ql)$ satisfies
$|\iota(\la)|=q^{i/2}$. Therefore the right hand side of
\form{altsum} grows asymptoticically as $\deg(c_1)q^{dn}$, when $n$ is large. In
particular, $[\wt{C}]\cdot[\wt{\Gm}_{q^n}]\neq 0$, when $n$ is sufficiently large.


The paper is organized as follows. In the first section we introduce locally invariant subsets 
and show their simple properties. In the second section we show that every correspondence can be made
locally invariant ``near the boundary" after a blowup. In the third section we recall a beautiful geometric construction of Pink, and study its properties. In the fourth section we review basic 
facts about intersection theory and \'etale cohomology and prove a formula for the intersection number. Finally, in the last section we carry out the proof of \rt{main}.

Our proof is essentially self-contained and uses nothing beyond a theorem of de Jong on alterations, standard facts from intersection theory, the Grothendieck-Lefschetz trace formula and purity. Our argument was strongly motivated by the trace formula of Lafforgue \cite[Prop IV.6]{Laf}, which in its turn is based on the construction of Pink.

We thank H\'el\`ene Esnault for her interest and remarks on the first draft of this note.
We also thank Ehud Hrushovski, Mark Sapir, and Luc Illusie for their interest and stimulating conversations.

\section{Locally invariant subsets}

\begin{Not} \label{N:cor} 
Let $k$ be a field. In this work we only will be interested in the case when $k$ is either algebraically closed or finite.

(a) By a {\em correspondence}, we mean a morphism $c=(c_1,c_2):C\to  X\times X$ between schemes of finite type
over $k$. 

(b) For a correspondence $c:C\to X\times X$ and open subsets
$U\subset X$ and $W\subset C$, we denote by $c|_W:W\to X\times X$
and $c|_U:c_1^{-1}(U)\cap c_2^{-1}(U)\to U\times U$ the
restrictions of $c$.

(c) Let $c:C\to X\times X$ and $\wt{c}:\wt{C}\to\wt{X}\times\wt{X}$ be
two correspondences. By a {\em morphism} from $\wt{c}$ to $c$,
we mean a pair of morphisms $[f]=(f,f_C)$, making
the following diagram commutative
\begin{equation} \label{Eq:funct}
\CD
        \wt{X}  @<{\wt{c}_1}<<   \wt{C}       @>{\wt{c}_2}>>         \wt{X}\\
        @V{f}VV                        @V{f_C}VV                       @VV{f}V\\
        X @<{c_1}<<                   C    @>{c_2}>>           X.
\endCD
\end{equation}

(d) Suppose that we are given correspondences $\wt{c}$ and $c$ as in (c) and a morphism $f:\wt{X}\to X$.
We say that $\wt{c}$ {\em lifts} $c$, if there exists a morphism $f_C:\wt{C}\to C$ such that
$[f]=(f,f_C)$ is a morphism from $\wt{c}$ to $c$.
\end{Not}


\begin{Def} \label{D:inv}
Let $c:Y\to X\times X$ be a correspondence, and let $Z\subset X$ be a
closed subset.

(a) We say that $Z$ is {\em $c$-invariant}, if $c_1(c_2^{-1}(Z))$
is set-theoretically contained in $Z$.


(b) We say that $Z$ is {\em locally $c$-invariant}, if for every point
$x\in Z$ there exists an open neighborhood $U\subset X$ of $x$
such that $Z\cap U\subset U$ is
$c|_U$-invariant.
\end{Def}



\begin{Lem} \label{L:locinv}
Let $[f]=(f,f_C)$ be a morphism from a correspondence  $\wt{c}:\wt{C}\to\wt{X}\times\wt{X}$ to
$c:C\to X\times X$. If $Z\subset X$ is a locally $c$-invariant
closed subset, then $f^{-1}(Z)\subset\wt{X}$ is locally $\wt{c}$-invariant.


\end{Lem}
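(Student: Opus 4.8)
The statement is purely set-theoretic, so I would argue directly by chasing points through the commutative diagram \form{funct}. Fix a point $\wt{x}\in f^{-1}(Z)$; I must produce an open neighborhood $\wt{U}\subset\wt{X}$ of $\wt{x}$ such that $f^{-1}(Z)\cap\wt{U}$ is $\wt{c}|_{\wt{U}}$-invariant, i.e. $\wt{c}_1\bigl(\wt{c}_2^{-1}(f^{-1}(Z)\cap\wt{U})\cap\wt{c}_1^{-1}(\wt{U})\bigr)\subset f^{-1}(Z)$. Set $x:=f(\wt{x})\in Z$. By local $c$-invariance of $Z$, choose an open $U\subset X$ with $x\in U$ such that $Z\cap U$ is $c|_U$-invariant, meaning $c_1\bigl(c_2^{-1}(Z\cap U)\cap c_1^{-1}(U)\bigr)\subset Z$. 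The natural candidate is $\wt{U}:=f^{-1}(U)$, which is open and contains $\wt{x}$.

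Now the point-chase: take any $\wt{y}\in\wt{c}_1\bigl(\wt{c}_2^{-1}(f^{-1}(Z)\cap\wt{U})\cap\wt{c}_1^{-1}(\wt{U})\bigr)$, say $\wt{y}=\wt{c}_1(\wt{\gm})$ with $\wt{c}_2(\wt{\gm})\in f^{-1}(Z)\cap\wt{U}$ and $\wt{c}_1(\wt{\gm})\in\wt{U}$. Put $\gm:=f_C(\wt{\gm})\in C$. Commutativity of \form{funct} gives $c_1(\gm)=f(\wt{c}_1(\wt{\gm}))=f(\wt{y})$ and $c_2(\gm)=f(\wt{c}_2(\wt{\gm}))$. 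Since $\wt{c}_2(\wt{\gm})\in f^{-1}(Z)$, we get $c_2(\gm)\in Z$; and $\wt{c}_2(\wt{\gm}),\wt{c}_1(\wt{\gm})\in\wt{U}=f^{-1}(U)$ give $c_2(\gm),c_1(\gm)\in U$. Hence $\gm\in c_2^{-1}(Z\cap U)\cap c_1^{-1}(U)$, so $c_1(\gm)\in Z$ by the $c|_U$-invariance of $Z\cap U$. Therefore $f(\wt{y})=c_1(\gm)\in Z$, i.e. $\wt{y}\in f^{-1}(Z)$, which is exactly what we needed.

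**Where the subtlety lies.** There is essentially no "hard part" here — the lemma is a formal consequence of the definitions — but the one place to be careful is the passage from $\wt{c}_2(\wt{\gm})\in f^{-1}(Z)\cap\wt{U}$ to $c_2(\gm)\in Z\cap U$: this uses both halves of the square (the left square for the $c_1$/$f$ relation and the right square for the $c_2$/$f$ relation) together with $\wt{U}=f^{-1}(U)$, and it relies on the invariance hypothesis being stated set-theoretically (as in \rd{inv}), so no scheme-theoretic fibre-product issues arise. One should also note $\wt{x}$ was arbitrary in $f^{-1}(Z)$, so the neighborhood $\wt{U}=f^{-1}(U)$ produced does depend on $\wt{x}$ (via $x=f(\wt{x})$ and the choice of $U$), which is exactly what the definition of "locally $\wt{c}$-invariant" permits. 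This completes the proof.
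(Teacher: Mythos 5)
Your proof is correct and is essentially the paper's own argument: both reduce to the $c$-invariant case on the neighborhood $\wt{U}=f^{-1}(U)$ and then use commutativity of the diagram, the paper phrasing the point-chase as the set identity $\wt{c}_2^{-1}(f^{-1}(Z))=f_C^{-1}(c_2^{-1}(Z))\subset f_C^{-1}(c_1^{-1}(Z))=\wt{c}_1^{-1}(f^{-1}(Z))$. No further comments.
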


\begin{proof}


The assertion is local, therefore we can replace $X$
by an open subset, thus assuming that $Z$ is $c$-invariant. In
this case,  $\wt{c}_2^{-1}(f^{-1}(Z))=f_C^{-1}(c_2^{-1}(Z))$ is set-theoretically contained in
$f_C^{-1}(c_1^{-1}(Z))=\wt{c}_1^{-1}(f^{-1}(Z))$, thus $f^{-1}(Z)$ is $\wt{c}$-invariant.
\end{proof}

\begin{Not} \label{N:fg}
Let $c:Y\to X\times X$ be a correspondence, and $Z\subset X$ a
closed subset. We set $F(c,Z):=c_2^{-1}(Z)\cap c_1^{-1}(X\sm
Z)$, and let $G(c,Z)$ be the union $\cup_{S\in
\Irr(F(c,Z))}[\ov{c_1(S)}\cap \ov{c_2(S)}]\subset X$, where
$\Irr(F(c,Z))$ denotes the set of irreducible components of $F(c,Z)$, and
$\ov{c_i(S)}\subset X$ is the closure of $c_i(S)$.
\end{Not}

\begin{Emp} \label{E:dim}
{\bf Remarks.} (a) Note that $Z$ is $c$-invariant of and only if $F(c,Z)=\emptyset$.
More generally, if $U\subset X$ is an open subset, then  $Z\cap U\subset U$ is $c|_U$-invariant
if and only if $F(c,Z)\cap  c_1^{-1}(U)\cap c_2^{-1}(U)=\emptyset$.

(b) For every $S\in \Irr(F(c,Z))$, we have we have $c_2(S)\subset
Z$, hence $\ov{c_2(S)}\subset Z$. Therefore $G(c,Z)$ is contained
in $Z$.

(c) Note that if $Z_1,Z_2\subset X$ are two closed locally $c$-invariant subsets, then the union
$Z_1\cup Z_2$ is also locally $c$-invariant.

(d) Note that if $Z\subset X$ is locally $c$-invariant, then $Z\cap U\subset U$ is locally $c|_U$-invariant
for every open $U\subset X$.
\end{Emp}

\begin{Lem} \label{L:locinv2}
Let $c:Y\to X\times X$ be a correspondence, and $Z\subset X$ a
closed subset. Then $X\sm G(c,Z)\subset X$ is the largest open
subset $U\subset X$ such that $Z\cap U\subset U$ is locally
$c|_U$-invariant.
\end{Lem}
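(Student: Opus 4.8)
The plan is to show two things: first, that $U_0:=X\sm G(c,Z)$ has the property that $Z\cap U_0$ is locally $c|_{U_0}$-invariant; and second, that any open $U\subset X$ with $Z\cap U$ locally $c|_U$-invariant is contained in $U_0$. By Remark \ref{E:dim}(d), the second statement reduces to the first: if $Z\cap U$ is locally $c|_U$-invariant and $x\in U$, then applying the first statement's "largeness" we want $x\notin G(c,Z)$. So really the heart is to characterize, for a point $x\in X$, when $x$ admits an open neighborhood $U$ with $Z\cap U$ being $c|_U$-invariant, and to see this happens precisely when $x\notin G(c,Z)$.

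First I would unwind the definitions using Remark \ref{E:dim}(a): for an open $U\subset X$, the set $Z\cap U$ is $c|_U$-invariant if and only if $F(c,Z)\cap c_1^{-1}(U)\cap c_2^{-1}(U)=\emptyset$, i.e. no point of $F(c,Z)$ maps into $U$ under both $c_1$ and $c_2$. Now fix $x\in X$. Suppose $x\notin G(c,Z)$; I claim $x$ has a neighborhood $U$ with $Z\cap U$ locally $c|_U$-invariant. For each irreducible component $S$ of $F(c,Z)$, since $x\notin \ov{c_1(S)}\cap\ov{c_2(S)}$, at least one of $\ov{c_1(S)}$, $\ov{c_2(S)}$ misses $x$; let $V_S$ be the complement of that closed set, an open neighborhood of $x$. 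If $F(c,Z)$ has finitely many components — which holds since $F(c,Z)$ is a locally closed subscheme of a scheme of finite type over $k$, hence noetherian — then $U:=\bigcap_S V_S$ is an open neighborhood of $x$. For any $s\in F(c,Z)$, $s$ lies in some component $S$, and then either $c_1(s)\notin U$ or $c_2(s)\notin U$, because on $V_S$ we have either $\ov{c_1(S)}\cap V_S=\emptyset$ or $\ov{c_2(S)}\cap V_S=\emptyset$ and $c_i(s)\in\ov{c_i(S)}$. Hence $F(c,Z)\cap c_1^{-1}(U)\cap c_2^{-1}(U)=\emptyset$, so $Z\cap U$ is $c|_U$-invariant by Remark \ref{E:dim}(a), and in particular $x\notin G(c,Z)$ implies $x$ is in the interior of the desired locus.

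Conversely, suppose $x\in G(c,Z)$, so $x\in\ov{c_1(S)}\cap\ov{c_2(S)}$ for some component $S$ of $F(c,Z)$, and let $U$ be any open neighborhood of $x$; I must show $Z\cap U$ is not $c|_U$-invariant. Since $x\in\ov{c_1(S)}$ and $U$ is open, $c_1(S)\cap U\neq\emptyset$; likewise $c_2(S)\cap U\neq\emptyset$. The subtlety is that I need a single point $s\in S$ with both $c_1(s)\in U$ and $c_2(s)\in U$. For this, note $c_1^{-1}(U)\cap S$ and $c_2^{-1}(U)\cap S$ are both nonempty open subsets of the irreducible set $S$, hence their intersection is a nonempty open subset of $S$; any point $s$ in it lies in $F(c,Z)\cap c_1^{-1}(U)\cap c_2^{-1}(U)$, which is therefore nonempty, so $Z\cap U$ fails to be $c|_U$-invariant by Remark \ref{E:dim}(a). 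Since this holds for every neighborhood $U$ of $x$, the point $x$ is not in the locus where $Z$ is locally $c$-invariant relative to $U$. Combining the two directions: $X\sm G(c,Z)$ is exactly the set of points admitting such a neighborhood, hence (being open, as just shown) it is the largest open $U$ with $Z\cap U$ locally $c|_U$-invariant. The main obstacle is the irreducibility argument in the converse direction — ensuring the two open conditions on $S$ are simultaneously satisfiable — which is precisely why $G(c,Z)$ is built from the components $S$ rather than from $F(c,Z)$ as a whole.

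\begin{proof}
Set $U_0:=X\sm G(c,Z)$. We first show that $Z\cap U_0\subset U_0$ is locally $c|_{U_0}$-invariant, and in fact that every $x\in U_0$ has an open neighborhood $U\subset X$ with $Z\cap U$ being $c|_U$-invariant. Since $F(c,Z)$ is a locally closed subscheme of $X$, which is of finite type over $k$, the set $\Irr(F(c,Z))$ is finite. For $x\in U_0$ and each $S\in\Irr(F(c,Z))$, we have $x\notin\ov{c_1(S)}\cap\ov{c_2(S)}$, so $x$ lies in $V_S:=X\sm\ov{c_1(S)}$ or in $V_S':=X\sm\ov{c_2(S)}$; choose $W_S$ to be whichever of $V_S$, $V_S'$ contains $x$. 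Put $U:=\bigcap_{S}W_S$, an open neighborhood of $x$. If $s\in F(c,Z)$, then $s\in S$ for some $S\in\Irr(F(c,Z))$, and by construction either $c_1(S)\cap W_S=\emptyset$ or $c_2(S)\cap W_S=\emptyset$; since $U\subset W_S$, in either case $s\notin c_1^{-1}(U)\cap c_2^{-1}(U)$. Thus $F(c,Z)\cap c_1^{-1}(U)\cap c_2^{-1}(U)=\emptyset$, so by \ref{E:dim}(a) the subset $Z\cap U$ is $c|_U$-invariant. In particular $Z\cap U_0$ is locally $c|_{U_0}$-invariant, and $U_0$ is the interior of $\{x\in X: x \text{ has such a neighborhood}\}$.

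It remains to show that if $U\subset X$ is open and $Z\cap U\subset U$ is locally $c|_U$-invariant, then $U\subset U_0$. Suppose not, and pick $x\in U\cap G(c,Z)$. Then $x\in\ov{c_1(S)}\cap\ov{c_2(S)}$ for some $S\in\Irr(F(c,Z))$. By \ref{E:dim}(d), $Z\cap U$ locally $c|_U$-invariant implies that $x$ has an open neighborhood $U'\subset U$ with $Z\cap U'$ being $c|_{U'}$-invariant, so by \ref{E:dim}(a) we have $F(c,Z)\cap c_1^{-1}(U')\cap c_2^{-1}(U')=\emptyset$. On the other hand, $x\in\ov{c_1(S)}$ and $U'$ open force $c_1(S)\cap U'\neq\emptyset$, i.e. $c_1^{-1}(U')\cap S\neq\emptyset$; similarly $c_2^{-1}(U')\cap S\neq\emptyset$. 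Since $S$ is irreducible, the two nonempty open subsets $c_1^{-1}(U')\cap S$ and $c_2^{-1}(U')\cap S$ of $S$ meet, giving a point $s\in S\subset F(c,Z)$ with $c_1(s)\in U'$ and $c_2(s)\in U'$. This contradicts $F(c,Z)\cap c_1^{-1}(U')\cap c_2^{-1}(U')=\emptyset$. Hence $U\subset U_0$, and $U_0=X\sm G(c,Z)$ is the largest open $U\subset X$ for which $Z\cap U\subset U$ is locally $c|_U$-invariant.
\end{proof}
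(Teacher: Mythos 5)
Your proof is correct and follows essentially the same route as the paper: both reduce via Remark \re{dim}(a) to the vanishing of $F(c,Z)\cap c_1^{-1}(U)\cap c_2^{-1}(U)$, use irreducibility of each $S\in\Irr(F(c,Z))$ to show that this forces one of $S\cap c_i^{-1}(U)$ to be empty, and then pass to the closures $\ov{c_i(S)}$ and intersect over the finitely many components. The only cosmetic point is that your appeal to $x\in Z$ (needed to invoke the definition of local invariance at $x$) really rests on Remark \re{dim}(b) (that $G(c,Z)\subset Z$) rather than \re{dim}(d), but this does not affect the argument.
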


\begin{proof}
Let $U\subset X$ be an open subset. By \re{dim}(a), $Z\cap U\subset U$ is $c|_U$-invariant
if and only if $S\cap c_1^{-1}(U)\cap c_2^{-1}(U)=\emptyset$ for every $S\in \Irr(F(c,Z))$.
Since $S$ is irreducible, this happens if and only if either $S\cap  c_1^{-1}(U)=\emptyset$
or $S\cap c_2^{-1}(U)=\emptyset$. But the condition $S\cap
c_i^{-1}(U)=\emptyset$ is equivalent to $c_i(S)\cap  U=\emptyset$
and hence to $U\subset X\sm \ov{c_i(S)}$.

By the proven above, a point $x\in X$ has
an open neighbourhood $U\subset X$ such that $Z\cap U\subset U$ is $c|_U$-invariant
if and only if for every  $S\in \Irr(F(c,Z))$, we have
$x\notin\ov{c_1(S)}$ or $x\notin\ov{c_2(S)}$. Therefore this happens if and only if
$x$ does not belong to $\cup_{S\in \Irr(F(c,Z))}[\ov{c_1(S)}\cap
\ov{c_2(S)}]=G(c,Z)$.
\end{proof}

\begin{Cor} \label{C:locinv}
Let $c:Y\to X\times X$ be a correspondence.

(a) A closed subset $Z\subset X$ is locally $c$-invariant if and only if $G(c,Z)=\emptyset$.

(b) For two closed subsets $Z_1,Z_2\subset X$, we have $G(c,
Z_1\cup Z_2)\subset G(c, Z_1)\cup G(c, Z_2)$.

(c) If $c_2$ is quasi-finite, then $\dim G(c,Z)\leq\dim Z-1$, where we set
$\dim\emptyset:=-\infty$.
\end{Cor}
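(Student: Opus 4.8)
\textbf{Proof plan for Corollary \ref{C:locinv}.}

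The plan is to read off all three parts from Lemma \ref{L:locinv2}, using the structure of $G(c,Z)$ as a union indexed by irreducible components of $F(c,Z)$. For part (a), Lemma \ref{L:locinv2} says that $X\sm G(c,Z)$ is the largest open $U$ with $Z\cap U$ locally $c|_U$-invariant; taking $U=X$ we see that $Z$ is locally $c$-invariant if and only if $X\subset X\sm G(c,Z)$, i.e. $G(c,Z)=\emptyset$. (One should double-check that ``$Z$ is $c|_X$-invariant in the local sense'' is literally Definition \ref{D:inv}(b), which it is.)

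For part (b), the point is that $F(c,Z_1\cup Z_2)\subset F(c,Z_1)\cup F(c,Z_2)$: if $c_2(y)\in Z_1\cup Z_2$ and $c_1(y)\notin Z_1\cup Z_2$, then $c_1(y)$ lies in neither $Z_j$, while $c_2(y)$ lies in some $Z_j$, so $y\in F(c,Z_j)$ for that $j$. Hence every irreducible component of $F(c,Z_1\cup Z_2)$ is contained in a component of $F(c,Z_1)$ or of $F(c,Z_2)$; since $S\subset S'$ forces $\ov{c_1(S)}\cap\ov{c_2(S)}\subset \ov{c_1(S')}\cap\ov{c_2(S')}$, the desired inclusion $G(c,Z_1\cup Z_2)\subset G(c,Z_1)\cup G(c,Z_2)$ follows by comparing the two unions term by term. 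Alternatively, one can deduce (b) from (a) together with Remark \re{dim}(c) and Lemma \ref{L:locinv}, but the direct argument via $F$ is cleaner and I would present that.

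For part (c), I want to bound $\dim\bigl(\ov{c_1(S)}\cap\ov{c_2(S)}\bigr)$ for each $S\in\Irr(F(c,Z))$. Since $F(c,Z)=c_2^{-1}(Z)\cap c_1^{-1}(X\sm Z)\subset c_2^{-1}(Z)$ and $c_2$ is quasi-finite, we get $\dim S\le \dim c_2^{-1}(Z)\le\dim Z$; moreover $c_2(S)\subset Z$ has $\ov{c_2(S)}\subset Z$ (Remark \re{dim}(b)). If $\dim Z=-\infty$, i.e. $Z=\emptyset$, then $G(c,Z)=\emptyset$ and the bound is vacuous, so assume $Z\neq\emptyset$. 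The key observation is that $c_1(S)$ is \emph{disjoint from $Z$} by definition of $F(c,Z)$, whereas $\ov{c_2(S)}\subset Z$; therefore $\ov{c_1(S)}\cap\ov{c_2(S)}\subset \ov{c_1(S)}\setminus c_1(S)$, a proper closed subset of the irreducible set $\ov{c_1(S)}$, hence of dimension $\le\dim\ov{c_1(S)}-1\le\dim S-1\le\dim Z-1$ (using quasi-finiteness of $c_2$ again for $\dim\ov{c_1(S)}\le\dim S$, since $c_1$ need not be quasi-finite I instead note $\dim c_1(S)\le\dim S$ always). Taking the union over the finitely many components $S$ gives $\dim G(c,Z)\le\dim Z-1$.

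The main thing to be careful about is part (c): one must make sure the inequality $\dim\ov{c_1(S)}\le\dim S$ is justified (it holds since $c_1(S)$ is a constructible image of $S$, so $\dim c_1(S)\le\dim S$ and $\dim\ov{c_1(S)}=\dim c_1(S)$), and that the strict drop really comes from $\ov{c_1(S)}\cap\ov{c_2(S)}$ being a \emph{proper} closed subset of the irreducible variety $\ov{c_1(S)}$ — which in turn uses $c_1(S)\cap Z=\emptyset$ together with $\ov{c_2(S)}\subset Z$, so that $c_1(S)$ itself misses $\ov{c_1(S)}\cap\ov{c_2(S)}$ entirely. The other two parts are essentially bookkeeping on top of Lemma \ref{L:locinv2}.
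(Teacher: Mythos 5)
Your proof is correct. Parts (a) and (c) track the paper's arguments closely; in (c) you drop the paper's initial reduction to $Z$ irreducible (done there via (b)), and that reduction was in fact unnecessary: the chain $\dim(\ov{c_1(S)}\cap\ov{c_2(S)})<\dim\ov{c_1(S)}\le\dim S\le\dim Z$ never uses irreducibility of $Z$, only irreducibility of $S$ and quasi-finiteness of $c_2$. For (b) you take a genuinely different route. The paper sets $U_i:=X\sm G(c,Z_i)$ and $U:=U_1\cap U_2$, invokes Lemma~\ref{L:locinv2} together with Remark~\re{dim}(c),(d) to show $(Z_1\cup Z_2)\cap U$ is locally $c|_U$-invariant, and then reads off $G(c,Z_1\cup Z_2)\subset X\sm U$ from the lemma again; you instead argue directly from the set inclusion $F(c,Z_1\cup Z_2)\subset F(c,Z_1)\cup F(c,Z_2)$ and compare the defining unions. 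Your version is more concrete but glides over one small point: to pass from the set inclusion to ``every $S\in\Irr(F(c,Z_1\cup Z_2))$ lies inside some component of $F(c,Z_1)$ or of $F(c,Z_2)$'' you should observe that $F(c,Z_j)\cap F(c,Z_1\cup Z_2)=c_2^{-1}(Z_j)\cap F(c,Z_1\cup Z_2)$ is \emph{closed} in $F(c,Z_1\cup Z_2)$, so that an irreducible component $S$, being covered by these two closed subsets, lies entirely in one of them and hence in some component $S'$ of the corresponding $F(c,Z_j)$. With that supplied, the term-by-term comparison $\ov{c_1(S)}\cap\ov{c_2(S)}\subset\ov{c_1(S')}\cap\ov{c_2(S')}$ gives the inclusion. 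Both approaches to (b) are fine: the paper's reuses exactly the machinery of Lemma~\ref{L:locinv2}, while yours makes the containment transparent at the level of the $F$-sets.
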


\begin{proof}
(a) follows immediately from the lemma.

(b) Set $U_i:=X\sm G(c,Z_i)$ and $U:=U_1\cap U_2$. Then every
$Z_i\cap U_i\subset U_i$ is locally $c|_{U_i}$-invariant by the lemma, hence  $(Z_1\cup
Z_2)\cap U\subset U$ is locally $c|_U$-invariant  by remarks \re{dim}(c), (d).
Hence $G(c, Z_1\cup Z_2)\subset X\sm U=  G(c, Z_1)\cup G(c,
Z_2)$ by the lemma.

(c) We may assume that $Z$ is non-empty and irreducible (using (b)). We want to show that
for every $S\in \Irr(F(c,Z))$, we have $\dim(\ov{c_1(S)}\cap\ov{c_2(S)})<\dim Z$.
Since $S\subset c_2^{-1}(Z)$ (see \re{dim}(b)) and $c_2$ is quasi-finite,
we have $\dim S\leq\dim Z$. Since $c_1(S)\cap \ov{c_2(S)}\subset
(X\sm Z)\cap Z=\emptyset$ (see \re{dim}(b)), we conclude that
$\ov{c_1(S)}\cap\ov{c_2(S)}\subset\ov{c_1(S)}\sm c_1(S)$, thus
$\dim(\ov{c_1(S)}\cap \ov{c_2(S)})<\dim S\leq\dim Z$.
\end{proof}


\begin{Def} \label{D:contr}
Let $c:C\to X\times X$ be a correspondence, and let $Z\subset X$ be a
closed subset, or, what is the same, a closed reduced subscheme.

(a) Denote by $\C{I}_Z\subset\C{O}_X$ the sheaf of ideals of $Z$, and 
let $c_i^{\cdot}(\C{I}_Z)=c_i^{-1}(\C{I}_Z)\subset \C{O}_C$ be its inverse image (as a sheaf of sets).

(b) Following \cite{Va}, we say that $c$ is {\em contracting near
$Z$}, if $c_1^{\cdot}(\C{I}_{Z})\subset c_2^{\cdot}(\C{I}_{Z})\cdot\C{O}_C$, and there exists $n>0$
such that $c_1^{\cdot}(\C{I}_{Z})^n\subset
c_2^{\cdot}(\C{I}_{Z})^{n+1}\cdot\C{O}_{C}$.

(c) We say that $c$ is {\em locally contracting near $Z$}, if for
every $x\in X$ there exists an open neighborhood $U\subset X$ of
$x$ such that $c|_U$ is contracting near $Z\cap U$.
\end{Def}

\begin{Emp} \label{E:finfld}
{\bf Correspondences over finite fields.} Let $c:C\to X\times X$ be a correspondence over $\B{F}$ such that $X$ is defined over $\fq$, and let $Z\subset X$ be a closed subset. 

(a) For $n\in\B{N}$, we denote by $c^{(n)}:C\to X\times X$ the correspondence $(\phi_{q^n}\circ c_1,c_2)$.

(b) We say that $Z$ is {\em locally $c$-invariant over $\fq$},  
if the open neighborhood $U$ of $x$ from \rd{inv}(b) can be chosen to be defined over $\fq$.  

(c) We say that $c$ is {\em locally contracting near $Z$ over $\fq$}, 
if the open neighborhood $U$ from \rd{contr}(c) can be chosen to be defined over $\fq$.  

\end{Emp}

\begin{Emp} \label{E:remfinfld}
{\bf Remark.} Note that if $Z$ is defined over $\fq$ and locally $c$-invariant over $\fq$ (see \re{finfld}(b)), then $Z$ is locally $c^{(n)}$-invariant over $\fq$ for every $n\in\B{N}$. Indeed, we immediately reduce to the case when $Z$ is $c$-invariant.  Then
$\phi_{q^n}(Z)\subset Z$, thus $Z$ is $c^{(n)}$-invariant. 

\end{Emp}

\begin{Emp} \label{E:degree}
{\bf The ramification degree.} Let $f:Y\to X$ be a morphism of Noetherian schemes, and let $Z$ be a closed subset of $X$.
Let $f^{-1}(Z)\subset Y$ be the schematic preimage of $Z$, and let $\ram(f,Z)$ be the smallest positive integer $m$ such that
$\sqrt{\C{I}_{f^{-1}(Z)}}^m\subset\C{I}_{f^{-1}(Z)}$.
\end{Emp}

The following lemma and its proof are basically copied from \cite[Lem 2.2.3]{Va}.

\begin{Lem} \label{L:contr}
In the situation of \re{finfld}, let $n\in\B{N}$ be such that
$q^n>\ram(c_2,Z)$ and $Z$ is $c^{(n)}$-invariant. Then the correspondence $c^{(n)}$ is contracting near $Z$.
\end{Lem}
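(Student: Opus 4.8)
The plan is to unwind the definitions and reduce the contracting condition to an inclusion of ideal sheaves that follows from the chosen bound on $q^n$. Write $Z$ for the reduced closed subscheme, $\C{I}=\C{I}_Z\subset\C{O}_X$, and abbreviate $c'=c^{(n)}=(\phi_{q^n}\circ c_1,c_2)$, with structure maps $c'_1=\phi_{q^n}\circ c_1$ and $c'_2=c_2$. First I would record the key point about Frobenius: since $X$ is defined over $\fq$, the morphism $\phi_{q^n}=\phi_q^n$ raises local sections of $\C{O}_X$ to the $q^n$-th power on the subring coming from $\fq$; more precisely, for the ideal sheaf $\C{I}$ (which we may take to be generated by sections defined over $\fq$, or argue after passing to a defining structure over $\fq$) one has $\phi_{q^n}^{-1}(\C{I})\cdot\C{O}_X\subseteq (\C{I})^{q^n}$ — indeed $\phi_{q^n}$ sends a section $g$ to $g^{q^n}$ up to the $\fq$-linear structure, so the inverse image of $\C{I}$ lands in the $q^n$-th symbolic-like power, and in any case in $\C{I}^{q^n}$ after taking radicals into account via $\ram$. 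This is exactly where the hypothesis $q^n>\ram(c_2,Z)$ will be consumed.

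Next I would translate both clauses of Definition~\ref{D:contr}(b) into statements about $\C{I}$ on $C$. Set $J_2:=c_2^{\cdot}(\C{I})\cdot\C{O}_C$, the ideal generated by the pullback under $c_2$; this is the ideal sheaf of the schematic preimage $c_2^{-1}(Z)$ up to radical, and by definition of $m:=\ram(c_2,Z)$ we have $\sqrt{J_2}^{\,m}\subseteq J_2$, hence also $(\sqrt{J_2})^{q^n}\subseteq J_2^{\,2}$ once $q^n\geq 2m$ — but more usefully $(\sqrt{J_2})^{q^n}\subseteq J_2^{\lceil q^n/m\rceil}$, and since $q^n>m$ this gives $(\sqrt{J_2})^{q^n}\subseteq J_2^{2}\subseteq J_2$ provided $q^n\ge 2m$; I would choose the exponent bookkeeping so that $q^n>m$ already yields $(\sqrt{J_2})^{q^n}\subseteq J_2^{\,2}$, which is what is needed for the $n+1$ clause. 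Let $J_1:=c_1^{\cdot}(\C{I})\cdot\C{O}_C$. The hypothesis that $Z$ is $c^{(n)}$-invariant means $c'_1((c'_2)^{-1}(Z))\subseteq Z$ set-theoretically, i.e. $(\phi_{q^n}\circ c_1)^{-1}(\C{I})$ vanishes on $c_2^{-1}(Z)$, which says precisely $\sqrt{\,(c'_1)^{\cdot}(\C{I})\cdot\C{O}_C\,}\subseteq\sqrt{J_2}$.

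Now I assemble: $(c'_1)^{\cdot}(\C{I})\cdot\C{O}_C=c_1^{\cdot}(\phi_{q^n}^{-1}\C{I})\cdot\C{O}_C\subseteq c_1^{\cdot}(\C{I}^{q^n})\cdot\C{O}_C=J_1^{\,q^n}$ by the Frobenius estimate. Combined with the invariance inclusion $J_1^{\,q^n}$ (a power of $J_1$, whose radical sits inside $\sqrt{(c'_1)^\cdot\C{I}}$... ) — more directly: invariance gives $\sqrt{J_1^{q^n}}\subseteq \sqrt{J_2}$, hence $J_1^{q^n}\subseteq\sqrt{J_2}$, and then $(J_1^{q^n})^{q^n}=J_1^{q^{2n}}\subseteq(\sqrt{J_2})^{q^n}\subseteq J_2^{\,2}$ by the $\ram$ estimate, while $J_1^{q^{2n}-q^n}\subseteq J_1^{q^n}\subseteq\sqrt{J_2}\subseteq J_2$ trivially — wait, that last needs $\sqrt{J_2}\subseteq J_2$, which is false in general. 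So instead: from $J_1^{q^n}\subseteq\sqrt{J_2}$ and $(\sqrt{J_2})^{m}\subseteq J_2$ we get $J_1^{mq^n}\subseteq J_2$, giving the first clause $c_1^{\cdot}(\C{I})\subseteq c_2^{\cdot}(\C{I})\cdot\C{O}_C$ after noting $c_1^\cdot(\C{I})$ consists of functions whose large powers lie in $J_2$, hence lie in $\sqrt{J_2}$... The cleanest route, which I would write out, is: (i) show $\sqrt{J_1}\subseteq\sqrt{J_2}$ directly from $c^{(n)}$-invariance (this handles the first, non-quantitative clause, since $c_1^\cdot(\C{I})\subseteq\sqrt{J_1}\subseteq\sqrt{J_2}$ and then one checks the genuine containment $c_1^\cdot(\C{I})\C{O}_C\subseteq J_2$ using that on each irreducible component the vanishing locus of $J_1$ contains that of $J_2$); (ii) for the quantitative clause, use $\phi_{q^n}^{-1}\C{I}\subseteq\C{I}^{q^n}$ to get $J_1\subseteq (c_1^\cdot\C{I}\,\C{O}_C)$ and, crucially, $c_1^{(n)\cdot}(\C{I})\subseteq J_1^{q^n}\subseteq J_2^{\lfloor q^n/m\rfloor}\cdot(\text{something})\subseteq J_2^{2}$, where the passage through $\ram(c_2,Z)<q^n$ converts the radical $\sqrt{J_2}$ containing $J_1$ into an honest power $J_2^{2}=J_2^{n+1}$ with $n+1=2$ — and then note $c_1^{(n)\cdot}(\C{I})\C{O}_C\subseteq J_2^2=(c_2^\cdot\C{I}\,\C{O}_C)^{n+1}$ with $n=1$, which is the required inequality with the Definition's "$n$" equal to $1$.

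The main obstacle — and the place to be careful — is the bookkeeping between \emph{radicals} and \emph{honest powers} of ideals: $c^{(n)}$-invariance is only a set-theoretic (radical) statement, whereas the contracting condition demands strict ideal inclusions $c_1^{\cdot}(\C{I})\subseteq c_2^{\cdot}(\C{I})\cdot\C{O}_C$ and $c_1^{\cdot}(\C{I})^{n}\subseteq c_2^{\cdot}(\C{I})^{n+1}\cdot\C{O}_C$. The hypothesis $q^n>\ram(c_2,Z)$ is exactly the device that bridges this gap: raising to the $q^n$-th power (which Frobenius does to the pulled-back ideal of $Z$) beats the radical-to-power exponent $m=\ram(c_2,Z)$, so a radical containment upstairs becomes a power containment downstairs. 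I would make sure the final inequality is stated with the Definition~\ref{D:contr}(b) integer taken to be $1$ (so "$n+1$" there is "$2$"), and that the argument is genuinely local so that one may shrink $X$ to an affine defined over $\fq$ and work with actual ideals in rings, where $\phi_{q^n}(g)=g^{q^n}$ for $g$ in the $\fq$-form; the sheaf-theoretic statement then follows. Since the lemma asserts this is "basically copied from \cite[Lem 2.2.3]{Va}", I would follow that reference's normalization for the exponents to keep the constants consistent with later use.
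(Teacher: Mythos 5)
Your high-level strategy matches the paper's: Frobenius raises pulled-back sections of $\C{I}_Z$ to $q^n$-th powers, $c^{(n)}$-invariance gives a radical containment $\C{I}_{(c_1^{(n)})^{-1}(Z)}\subset\sqrt{J_2}$ where $J_2:=\C{I}_{c_2^{-1}(Z)}$, and the hypothesis $q^n>m:=\ram(c_2,Z)$ (i.e.\ $\sqrt{J_2}^{\,m}\subset J_2$) is the bridge from radical to honest power. But there is a genuine gap in the exponent bookkeeping: you assert the integer in \rd{contr}(b) can be taken to be $1$, that is, $(c_1^{(n)})^{\cdot}(\C{I}_Z)\subset J_2^{\,2}$. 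What the hypotheses actually give is $(c_1^{(n)})^{\cdot}(\C{I}_Z)\subset\sqrt{J_2}^{\,q^n}\subset\sqrt{J_2}^{\,m+1}$, and $\sqrt{J_2}^{\,m+1}\not\subset J_2^{\,2}$ in general (take $J_2=(x^2)$ in $k[x]$, so $m=2$ and $\sqrt{J_2}^{\,3}=(x^3)\not\subset(x^4)=J_2^{\,2}$); your remark that $q^n>m$ already yields $\sqrt{J_2}^{\,q^n}\subset J_2^{\,2}$ would need $q^n\geq 2m$, which is not assumed. The fix, and what the paper does, is to take the integer to be $m$: raising $(c_1^{(n)})^{\cdot}(\C{I}_Z)\subset\sqrt{J_2}^{\,m+1}$ to the $m$-th power gives $(c_1^{(n)})^{\cdot}(\C{I}_Z)^m\subset\sqrt{J_2}^{\,m(m+1)}=(\sqrt{J_2}^{\,m})^{m+1}\subset J_2^{\,m+1}$, and the first, non-quantitative clause follows from $\sqrt{J_2}^{\,m+1}\subset\sqrt{J_2}^{\,m}\subset J_2$.

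One further difference worth flagging: you route the Frobenius step through $J_1:=c_1^{\cdot}(\C{I}_Z)\cdot\C{O}_C$ via an estimate $\phi_{q^n}^{\cdot}(\C{I}_Z)\subset\C{I}_Z^{\,q^n}$, which presupposes an $\fq$-form of $Z$ (you hedge on this). The paper sidesteps the issue by introducing $\varphi_{q^n}$, the inverse of the arithmetic Frobenius, and the identity $\phi_{q^n}^{\cdot}(f)=(\varphi_{q^n}^{\cdot}(f))^{q^n}$, valid for arbitrary sections $f$; so $(c_1^{(n)})^{\cdot}(\C{I}_Z)$ consists of $q^n$-th powers of elements of $c_1^{\cdot}\varphi_{q^n}^{\cdot}(\C{I}_Z)$ unconditionally. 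Since $\sqrt{J_2}$ is radical and contains $(c_1^{(n)})^{\cdot}(\C{I}_Z)$ by invariance, it contains $c_1^{\cdot}\varphi_{q^n}^{\cdot}(\C{I}_Z)$ itself, hence $(c_1^{(n)})^{\cdot}(\C{I}_Z)\subset\sqrt{J_2}^{\,q^n}$, with no rationality assumption on $Z$. This is worth adopting, as the lemma as stated does not assume $Z$ is defined over $\fq$.
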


\begin{proof}
Set $m:=\ram(c_2,Z)$, and let $\varphi_{q^n}$ be the inverse of the arithmetic Frobenius isomorphism
$X\isom X$ over $\B{F}_{q^n}$. Then for every section $f$ of $\C{O}_{X}$, we have
$\phi_{q^n}^{\cdot}(f)=(\varphi_{q^n})^{\cdot}(f)^{q^n}$.
Therefore $(c_1^{(n)})^{\cdot}(\C{I}_Z)$ equals
$c_1^{\cdot}(\varphi_{q^n})^{\cdot}(\C{I}_Z)^{q^n}$.

Since $Z$ is $c^{(n)}$-invariant, we get an inclusion $\C{I}_{(c_1^{(n)})^{-1}(Z)}\subset\sqrt{\C{I}_{c_2^{-1}(Z)}}$. Hence
$c_1^{\cdot}(\varphi_{q^n})^{\cdot}(\C{I}_Z)
\subset\sqrt{\C{I}_{c_2^{-1}(Z)}}$, thus
$(c_1^{(n)})^{\cdot}(\C{I}_Z)\subset\sqrt{\C{I}_{c_2^{-1}(Z)}}^{q^n}$.
As  $q^n\geq m+1$, we conclude that
$(c_1^{(n)})^{\cdot}(\C{I}_Z)\subset \sqrt{\C{I}_{c_2^{-1}(Z)}}^{m+1}
\subset \C{I}_{c_2^{-1}(Z)}$. Furthermore, $(c_1^{(n)})^{\cdot}(\C{I}_Z)^m$ is contained in
$\sqrt{\C{I}_{c_2^{-1}(Z)}}^{m(m+1)}
\subset(\C{I}_{c_2^{-1}(Z)})^{m+1}$. Hence $c^{(n)}$ is contracting near $Z$, as claimed.
\end{proof}

\begin{Cor} \label{C:contr} 
In the situation of \re{finfld}(b), let $n\in\B{N}$ be such that
$q^n>\ram(c_2,Z)$ and $Z$ is locally $c^{(n)}$-invariant over $\fq$. 
Then the correspondence $c^{(n)}$ is locally contracting near $Z$ over $\fq$.
\end{Cor}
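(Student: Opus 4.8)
The plan is to deduce Corollary \ref{C:contr} from Lemma \ref{L:contr} by localizing on $X$, taking care that the neighborhoods involved are defined over $\fq$. So suppose $q^n>\ram(c_2,Z)$ and that $Z$ is locally $c^{(n)}$-invariant over $\fq$. Fix a point $x\in X$. By the definition of local $c^{(n)}$-invariance over $\fq$ (see \re{finfld}(b)), there is an open neighborhood $U\subset X$ of $x$, defined over $\fq$, such that $Z\cap U\subset U$ is $c^{(n)}|_U$-invariant. The goal is to apply Lemma \ref{L:contr} to the correspondence $c|_U:c_1^{-1}(U)\cap c_2^{-1}(U)\to U\times U$ and the closed subset $Z\cap U\subset U$, and conclude that $(c|_U)^{(n)}=c^{(n)}|_U$ is contracting near $Z\cap U$; since $U$ is defined over $\fq$, this exhibits the required $\fq$-neighborhood of $x$ and proves that $c^{(n)}$ is locally contracting near $Z$ over $\fq$.

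To run this argument I need to check the two hypotheses of Lemma \ref{L:contr} for the restricted data. The second, that $Z\cap U$ is $c^{(n)}|_U$-invariant, is exactly how $U$ was chosen, once one notes that $(c|_U)^{(n)}$ agrees with $c^{(n)}|_U$ (both are obtained by post-composing the first projection with $\phi_{q^n}$, which commutes with restriction to the $\fq$-open $U$). The first hypothesis requires $q^n>\ram(c_2|_U,Z\cap U)$, so the main point is the inequality $\ram(c_2|_U,Z\cap U)\leq\ram(c_2,Z)$. This is a local statement about the integer controlling nilpotence of the ideal sheaf of the schematic preimage: writing $m=\ram(c_2,Z)$, we have $\sqrt{\C{I}_{c_2^{-1}(Z)}}^{\,m}\subset\C{I}_{c_2^{-1}(Z)}$ on all of $C$, and restricting this inclusion of ideal sheaves to the open subscheme $c_1^{-1}(U)\cap c_2^{-1}(U)$ — on which the schematic preimage of $Z\cap U$ under $c_2|_U$ is the restriction of $c_2^{-1}(Z)$ and the radical commutes with the flat localization — gives $\sqrt{\C{I}_{(c_2|_U)^{-1}(Z\cap U)}}^{\,m}\subset\C{I}_{(c_2|_U)^{-1}(Z\cap U)}$, whence $\ram(c_2|_U,Z\cap U)\leq m<q^n$.

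I expect the only mildly delicate step to be this compatibility of $\ram$ with open restriction, i.e. that forming schematic preimages, taking radicals of ideal sheaves, and the bound $m$ all behave well under passing to an open subscheme; but this is routine since all of these operations are local on the base and the square defining $(c|_U)^{-1}(Z\cap U)$ is cartesian. Everything else is bookkeeping: $U$ defined over $\fq$ plus the conclusion of Lemma \ref{L:contr} applied over $U$ is precisely the assertion that $c^{(n)}$ is locally contracting near $Z$ over $\fq$ in the sense of \re{finfld}(c). Since $x\in X$ was arbitrary, this completes the proof.
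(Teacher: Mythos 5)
Your proof is correct and follows essentially the same route as the paper: localize to an $\fq$-open neighborhood $U$ furnished by the hypothesis of local $c^{(n)}$-invariance over $\fq$, observe that $\ram((c|_U)_2,Z\cap U)\leq\ram(c_2,Z)$ since formation of schematic preimages and radicals is local, and then apply Lemma \ref{L:contr} to $c|_U$. The paper simply states the $\ram$ inequality without the justification you supply; otherwise the two arguments coincide.
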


\begin{proof}
For every open subset $U\subset X$ we have $\ram((c|_U)_2,Z\cap U)\leq \ram(c_2,Z)$. Thus the assertion follows 
from  \rl{contr}. 
\end{proof}
\section{Main technical result}

\begin{Emp} \label{E:setup}
{\bf Set up.} Let  $c:C\to X\times X$ be a correspondence over $k$, and let $X^0\subset X$ be a non-empty open subset
such that $X$ and $C$ are irreducible, $c_2$ is dominant, and $\dim C=\dim X$.
\end{Emp}

\begin{Lem} \label{L:quasifinite}
In the situation of \re{setup}, there exist non-empty open subsets $V\subset U\subset X^0$ such
that

\indent\indent(i) $c_1^{-1}(V)\subset c_2^{-1}(U)$;

\indent\indent(ii) the closed subset $U\sm V\subset U$ is locally
$c|_{U}$-invariant.
\end{Lem}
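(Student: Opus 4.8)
The plan is to obtain $V\subseteq U$ in two stages: first shrink $X^0$ to a non-empty open subset $U$ over which $c_2$ is quasi-finite, and then set $V:=U\setminus Z$ for a carefully chosen closed subset $Z\subseteq U$ which is locally $c|_U$-invariant and has $\dim Z<\dim U$. Such a $V$ satisfies (ii) tautologically, so the real content is (i), which I will reduce to the requirement that $Z$ be large enough.

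First I would pass to the quasi-finite locus of $c_2$. Since $c_2$ is dominant and $\dim C=\dim X$, its generic fibre is $0$-dimensional, so by upper semicontinuity of fibre dimension the open set $C'\subseteq C$ of points at which $c_2$ has $0$-dimensional fibres is dense; hence $C\setminus C'$ is a proper closed subset of the irreducible $C$ and $\overline{c_2(C\setminus C')}$ is a proper closed subset of $X$. Put $U:=X^0\setminus\overline{c_2(C\setminus C')}$, a non-empty open subset of $X^0$ with $c_2^{-1}(U)\subseteq C'$, so that $c_2$ is quasi-finite over $U$. Next put $B:=c_2^{-1}(X\setminus U)$; as $c_2$ is dominant, $B$ is a proper closed subset of $C$, hence $\dim B<\dim X$, and therefore $Z_1:=U\cap\overline{c_1(B)}$ is a closed subset of $U$ with $\dim Z_1<\dim X=\dim U$. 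The point of $Z_1$ is this: if $V\subseteq U\setminus Z_1$ and $c_1(p)\in V$, then $c_1(p)\notin\overline{c_1(B)}$, so $p\notin B$, i.e. $c_2(p)\in U$; thus $V\subseteq U\setminus Z_1$ already forces $c_1^{-1}(V)\subseteq c_2^{-1}(U)$, which is (i).

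The main step is to enlarge $Z_1$ to a locally $c|_U$-invariant closed subset of $U$ that still has dimension $<\dim U$, by forming an ``invariant saturation''. Set $Z^{(0)}:=Z_1$ and, for the correspondence $c|_U:c_1^{-1}(U)\cap c_2^{-1}(U)\to U\times U$, define $Z^{(k+1)}:=Z^{(k)}\cup\bigcup_{S\in\Irr(F(c|_U,Z^{(k)}))}\overline{c_1(S)}$, all closures taken in $U$ (see \rn{fg} for $F(\cdot,\cdot)$ and $\Irr(\cdot)$). Each component $S$ satisfies $S\subseteq c_2^{-1}(Z^{(k)})\subseteq c_2^{-1}(U)\subseteq C'$, so by quasi-finiteness of $c_2$ over $U$ we get $\dim\overline{c_1(S)}\le\dim S\le\dim Z^{(k)}$; hence the $Z^{(k)}$ form an increasing chain of closed subsets of $U$ of dimension $\le\dim Z_1<\dim U$, which stabilizes by Noetherianity at some $Z:=Z^{(N)}=Z^{(N+1)}$. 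Then $\overline{c_1(S)}\subseteq Z$ for every $S\in\Irr(F(c|_U,Z))$, while also $S\subseteq c_1^{-1}(U\setminus Z)$, so $c_1(S)\subseteq Z\cap(U\setminus Z)=\emptyset$ and $S=\emptyset$; since this holds for every component, $F(c|_U,Z)=\emptyset$. By Remark~\re{dim}(a), $Z$ is $c|_U$-invariant, hence locally $c|_U$-invariant, and $\dim Z\le\dim Z_1<\dim U$.

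It then suffices to take $V:=U\setminus Z$: this is a non-empty open subset of $U\subseteq X^0$ because $\dim Z<\dim U$ and $U$ is irreducible; (ii) holds since $U\setminus V=Z$ is locally $c|_U$-invariant; and (i) holds because $Z\supseteq Z_1$, by the observation closing the second paragraph. The step I expect to be the crux is the dimension bookkeeping in the saturation: it is exactly the quasi-finiteness of $c_2$, arranged in the first step and reflected in \rco{locinv}, that keeps the sets $\overline{c_1(S)}$ from gaining dimension and eventually swallowing all of $U$; everything else is formal manipulation of $F(c,\cdot)$ and of (local) $c$-invariance.
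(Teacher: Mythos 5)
The first two paragraphs of your argument (passing to the quasi-finite locus to get $U$, defining $B$ and $Z_1$, and observing that any $V\subseteq U\setminus Z_1$ already forces (i)) match the start of the paper's proof and are fine. The gap is in the saturation step. You form an \emph{increasing} chain of closed subsets $Z^{(0)}\subseteq Z^{(1)}\subseteq\cdots$ of $U$ and assert it ``stabilizes by Noetherianity.'' Noetherianity gives the \emph{descending} chain condition on closed sets, not the ascending one, and a uniform dimension bound does not rescue this: an increasing sequence of finite subsets of a curve never stabilizes. Concretely, over $k=\B{C}$ take $X=C=\B{A}^1$, $c_1(t)=t$, $c_2(t)=t+1$, $X^0=\B{A}^1\setminus\{0\}$. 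Then $U=X^0$, $Z_1=\{-1\}$, and your recursion gives $Z^{(k)}=\{-1,-2,\ldots,-(k+1)\}$: all $0$-dimensional, never stabilizing, with Zariski-dense union.

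In fact your target is unreachable in principle, not just unproved: you are trying to build a proper closed $c|_U$-\emph{invariant} $Z\supseteq Z_1$, but in the example above no proper closed $c|_U$-invariant subset contains $\{-1\}$. What saves the lemma is that (ii) only asks for \emph{local} invariance, which is strictly weaker, and $\{-1\}$ is indeed locally $c|_U$-invariant (restrict to $U\setminus\{-2\}$ and $F$ becomes empty). The paper's proof exploits exactly this slack: rather than enlarging by $\overline{c_1(S)}$, it enlarges (and also shrinks $U$) using $G(c,\cdot)$, i.e.\ by the intersections $\overline{c_1(S)}\cap\overline{c_2(S)}$. By \rco{locinv}(c), quasi-finiteness of $c_2$ forces $\dim G(c,Z)\le\dim Z-1$, a \emph{strict} drop that your bound $\dim\overline{c_1(S)}\le\dim Z^{(k)}$ does not give. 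That strict decrease is what makes the paper's recursion terminate; you should replace your saturation with the $G$-based descent (and allow yourself to shrink $U$ at each step, since local invariance is tested after restriction).
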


\begin{proof}
Since $\dim C=\dim X$ and $c_2$ is dominant, there exists a non-empty open subset
$U_0\subset X^0$ such that $c_2|_{c_2^{-1}(U_0)}$ is quasi-finite.
By induction, we define for every $j\geq 0$ open subsets
$V_j\subset U_j\subset X^0$ by the rules
\begin{equation} \label{Eq:vj}
V_j:=U_j\sm \ov{c_1(c_2^{-1}(X\sm U_j))},
\end{equation}
\begin{equation} \label{Eq:zj}
Z_{j}:=G(c|_{U_{j}},U_{j}\sm V_{j}), \text{ and }U_{j+1}:=
U_{j}\sm Z_{j}\subset U_j.
\end{equation}

First we claim that $U_j$ and $V_j$ are non-empty. Indeed, $U_0\neq\emptyset$ by construction, and if
$U_j\neq\emptyset$, then $c_2^{-1}(X\sm U_j)\neq C$ since $c_2$ is dominant, hence
$\dim c_2^{-1}(X\sm U_j)<\dim C$, since $C$ is irreducible. Thus
\[\dim\ov{c_1(c_2^{-1}(X\sm U_j))}\leq \dim c_2^{-1}(X\sm U_j)<\dim C=\dim X=\dim U_j,
\]
hence $V_j\neq\emptyset$.
Finally, $Z_j\subset U_{j}\sm V_{j}$ (see \re{dim}(b)), thus $U_{j+1}\supset V_j$, hence
$U_{j+1}\neq\emptyset$.

We claim that for every $j\geq 0$, we have

(i)$'$ $c_1^{-1}(V_j)\subset c_2^{-1}(U_j)$,  and

(ii)$'$  $U_{j+1}\sm V_{j}=U_{j+1}\cap (U_j\sm V_j)$ is locally
$c|_{U_{j+1}}$-invariant.

\noindent Indeed,  (i)$'$ is equivalent to the equality $c_1^{-1}(V_j)\cap c_2^{-1}(X\sm U_j)=\emptyset$, hence to the equality
$V_j\cap c_1(c_2^{-1}(X\sm U_j))=\emptyset$, so (i)$'$ follows from \form{vj}. Next, (ii)$'$ follows from
\rl{locinv2}.

We will show that $Z_j=\emptyset$ for some $j$.
In this case, $U:=U_j=U_{j+1}$ and $V:=V_j$ satisfy the properties of the
lemma. Indeed, properties (i) and (ii) would follow from (i)$'$ and (ii)$'$,
respectively.

Since $U_{j+1}\subset U_j$, formula \form{vj} implies that $V_{j+1}\subset V_j$. It suffices to show that for every $j$ we have inequalities
\begin{equation} \label{Eq:dim}
\dim Z_{j+1}+1\leq\dim(V_{j}\sm V_{j+1})\leq\dim Z_j.
\end{equation}

Let $\ov{V_{j}\sm V_{j+1}}\subset U_{j+1}$ be the closure of $V_{j}\sm V_{j+1}$. Then
$U_{j+1}\sm V_{j+1}$ equals $(U_{j+1}\sm V_{j})\cup(V_{j}\sm V_{j+1})=(U_{j+1}\sm V_{j})\cup\ov{V_{j}\sm V_{j+1}}$, hence we conclude from \rco{locinv}(b) that  $Z_{j+1}=G(c|_{U_{j+1}},U_{j+1}\sm V_{j+1})$
is contained in
\[
G(c|_{U_{j+1}},\ov{V_{j}\sm V_{j+1}})\cup G(c|_{U_{j+1}},U_{j+1}\sm V_{j}).
\]
Using (ii)$'$ and \rco{locinv}(a), we conclude that
$G(c|_{U_{j+1}},U_{j+1}\sm V_{j})$ is empty, thus
$Z_{j+1}\subset G(c|_{U_{j+1}},\ov{V_{j}\sm V_{j+1}})$. Therefore, by
\rco{locinv}(c), we get inequalities
$\dim Z_{j+1}+1\leq \dim G(c|_{U_{j+1}},\ov{V_{j}\sm V_{j+1}})+1\leq\dim\ov{V_{j}\sm
V_{j+1}}=\dim(V_{j}\sm V_{j+1})$.

Next, since  $X\sm U_{j+1}=(X\sm U_j)\cup Z_j$, it follows from
\form{vj} and \form{zj} that $V_{j+1}=V_{j}\sm[Z_j\cup \ov{c_1(c_2^{-1}(Z_{j}))}]$, thus
$V_{j+1}\sm V_{j}\subset Z_j\cup \ov{c_1(c_2^{-1}(Z_{j}))}$.

Finally, since $Z_{j}\subset U_{j}\subset U_0$, we conclude that
$c_2|_{c_2^{-1}(Z_{j})}$ is quasi-finite, hence
$\dim \ov{c_1(c_2^{-1}(Z_{j}))}\leq\dim c_2^{-1}(Z_{j})\leq \dim
Z_{j}$. Therefore $\dim(V_{j}\sm V_{j+1})\leq\dim Z_j$, and the proof of \form{dim} is complete.
\end{proof}

\begin{Prop} \label{P:blowup}
In the situation of \re{setup}, there exists a non-empty open subset
$V\subset X^0$ and a blow-up $\pi:\wt{X}\to X$, which is an
isomorphism over $V$, such that for every correspondence
$\wt{c}:\wt{C}\to \wt{X}\times\wt{X}$ lifting $c$, the closed
subset $\wt{X}\sm \pi^{-1}(V)\subset \wt{X}$ is locally
$\wt{c}$-invariant.
\end{Prop}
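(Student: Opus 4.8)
The plan is to produce the blow-up $\pi:\wt X\to X$ so that the boundary $\wt X\smallsetminus\pi^{-1}(V)$ becomes locally $\wt c$-invariant, by a two-step procedure: first record the obstruction on $X$ itself using the set $G$ from Notation \ref{N:fg}, then blow it away, and iterate. First I would apply Lemma \ref{L:quasifinite} to obtain non-empty open subsets $V\subset U\subset X^0$ with $c_1^{-1}(V)\subset c_2^{-1}(U)$ and with $U\smallsetminus V$ locally $c|_U$-invariant. The point of this step is that on $U$ the boundary $U\smallsetminus V$ is already ``good'', so all the failure of local invariance is concentrated in the smaller closed set $Z:=X\smallsetminus U$, which has strictly smaller dimension than $X$ — or rather, its relevant part does. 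The natural idea is then to blow up $X$ along (the closure of) the locus where invariance fails and argue by Noetherian induction on dimension.

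The key mechanism I would use is Corollary \ref{C:locinv}: a closed $Z$ is locally $c$-invariant iff $G(c,Z)=\emptyset$, and $\dim G(c,Z)\le\dim Z-1$ when $c_2$ is quasi-finite (Corollary \ref{C:locinv}(c)). Combined with the functoriality Lemma \ref{L:locinv} — local $c$-invariance pulls back along any morphism of correspondences — this gives the inductive engine: if $\wt c$ lifts $c$ via $(f,f_C)$ and $Z$ is locally $c$-invariant, then $f^{-1}(Z)$ is locally $\wt c$-invariant, so it suffices to arrange that the boundary of $\wt X$ over $V$ is a finite union of preimages of locally invariant closed subsets of $X$ together with one piece living over a scheme of strictly smaller dimension, to which the inductive hypothesis applies. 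Concretely: write $X\smallsetminus V=(U\smallsetminus V)\cup Z$ with $U\smallsetminus V$ locally $c|_U$-invariant and $\dim Z<\dim X$; the issue is that $U\smallsetminus V$ is only locally invariant on $U$, not on $X$, and that $Z$ is not itself a correspondence-stable object. I would handle the first issue by noting that $\pi^{-1}(U\smallsetminus V)$ is locally $\wt c$-invariant as a statement over $\pi^{-1}(U)$, and that local $\wt c$-invariance is a local condition (Remark \ref{E:dim}(d) and Remark \ref{E:dim}(c)), so it can be checked by covering $\wt X$ by $\pi^{-1}(U)$ and a neighbourhood of $\pi^{-1}(Z)$; the second issue is resolved by applying the inductive hypothesis to a suitable correspondence induced on (a blow-up dominating) $Z$ — or more cleanly, by blowing up $X$ along the scheme-theoretic $Z$ and observing that on the blow-up the strict transform of the boundary over $V$ decomposes into the strict transform of $U\smallsetminus V$ (handled above) plus the exceptional locus, which maps to $Z$ and hence has the right dimension bound after we restrict the lifted correspondence.

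The main obstacle I anticipate is the bookkeeping needed to make the induction actually close: one must ensure that after blowing up, the \emph{lifted} correspondence $\wt c$ (which is not unique, and which we are only told exists) still has $\wt c_2$ quasi-finite over the relevant open set so that Corollary \ref{C:locinv}(c) applies, and one must control the dimension of the locus $G(\wt c,\text{boundary})$ not just set-theoretically but compatibly with further blow-ups — in particular the claim must hold for \emph{every} lift $\wt c$ simultaneously, which forces the construction of $\pi$ to be made using only $c$ (not any lift), presumably by blowing up along $G(c|_U, U\smallsetminus V)$ and its iterates as in the proof of Lemma \ref{L:quasifinite}. So I expect the real content to be: choose $\pi$ to be the composite of blow-ups along the centers $Z_j$ produced by running the algorithm of Lemma \ref{L:quasifinite}, then show that on the resulting $\wt X$ every lift $\wt c$ has locally invariant boundary over $V$, using Lemma \ref{L:locinv} to transport the local invariance of the $U_j\smallsetminus V_j$ upstairs and a dimension count to kill the exceptional contributions. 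The termination of the process is guaranteed exactly by the dimension inequalities \form{dim} already established.
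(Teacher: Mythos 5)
Your first step (invoking \rl{quasifinite} to get $V\subset U\subset X^0$ with $c_1^{-1}(V)\subset c_2^{-1}(U)$ and $U\sm V$ locally $c|_U$-invariant) matches the paper, and you correctly identify both that the obstruction is measured by $G(c,X\sm V)$ and that the center of the blow-up must be built from $c$ alone, since the lift $\wt{c}$ is arbitrary. But the core of your argument --- ``blow up the bad locus and induct on dimension, killing the exceptional contributions by a dimension count'' --- has a genuine gap, and it is exactly the gap you flag yourself without resolving. The induction cannot close: after one blow-up there is no canonical correspondence on $\wt{X}$ (only the unspecified lifts $\wt{c}$, which need not exist or be unique), so there is no well-defined center for a second blow-up, and nothing forces $G(\wt{c},\pi^{-1}(X\sm V))$ to be smaller-dimensional or eventually empty. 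Moreover, \rco{locinv}(c) requires $\wt{c}_2$ quasi-finite, which you cannot guarantee for an arbitrary lift. Blowing up the reduced set $G(c,X\sm V)$, or the centers $Z_j$ from \rl{quasifinite}, does not by itself make any lift invariant near the exceptional locus.

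The missing idea is a separation trick that finishes in a \emph{single} blow-up. Set $F:=F(c,X\sm V)=c_2^{-1}(X\sm V)\cap c_1^{-1}(V)$ and, for each $S\in\Irr(F)$, blow up the ideal $\C{K}_S:=\C{I}_{\ov{c_1(S)}}+\C{I}_{\ov{c_2(S)}}$ of the \emph{schematic} intersection $\ov{c_1(S)}\cap\ov{c_2(S)}$ (take $\pi:\wt{X}=\Bl_{\C{K}}(X)\to X$ with $\C{K}=\prod_S\C{K}_S$). Such a blow-up makes the strict transforms of $\ov{c_1(S)}$ and $\ov{c_2(S)}$ disjoint; since $c_1(S)\subset V$ and $c_2(S)\subset U$ (by \rl{quasifinite}(i)) and $\pi$ is an isomorphism over $U$ (because the support of $\C{O}_X/\C{K}$ is $G(c,X\sm V)\subset X\sm U$), these strict transforms are exactly $\ov{\pi^{-1}(c_1(S))}$ and $\ov{\pi^{-1}(c_2(S))}$, so $\ov{\pi^{-1}(c_1(S))}\cap\ov{\pi^{-1}(c_2(S))}=\emptyset$. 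Now for any lift $\wt{c}$ one has $F(\wt{c},\pi^{-1}(X\sm V))=\pi_C^{-1}(F)$, so every irreducible component $\wt{S}$ maps into some $S$, whence $\ov{\wt{c}_1(\wt{S})}\cap\ov{\wt{c}_2(\wt{S})}\subset\ov{\pi^{-1}(c_1(S))}\cap\ov{\pi^{-1}(c_2(S))}=\emptyset$, i.e.\ $G(\wt{c},\pi^{-1}(X\sm V))=\emptyset$ and \rco{locinv}(a) concludes. Note that the center must be the sum of ideals, not its radical: blowing up the set-theoretic intersection need not separate the strict transforms. No induction, and no quasi-finiteness of $\wt{c}_2$, is needed.
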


\begin{proof}
The argument goes similarly to that of \cite[Lem.
1.5.4]{Va}. Let $V\subset U\subset X^0$ be as in \rl{quasifinite}. Set
$F:=F(c,X\sm V)=c_2^{-1}(X\sm V)\cap c_1^{-1}(V)$. For every $S\in
\Irr(F)$, we denote by $\C{K}_S$ the sheaf of ideals
$\C{I}_{\ov{c_1(Z)}}+\C{I}_{\ov{c_2(Z)}}\subset \C{O}_X$ of the schematic intersection $\ov{c_1(S)}\cap
\ov{c_2(S)}$, and set $\C{K}:=\prod_{S\in \Irr(F)}\C{K}_S\subset\C{O}_X$. Let
$\wt{X}$ be the blow-up $\Bl_{\C{K}}(X)$, and denote by
$\pi:\wt{X}\to X$ the canonical projection.

We claim that $V$ and $\pi$ satisfy the required properties.
Notice that the support of $\C{O}_X/\C{K}$
equals  $\cup_S(\ov{c_1(S)}\cap
\ov{c_2(S)})=G(c,X\sm V)$. Since $U\sm V$ is locally
$c|_U$-invariant, $G(c,X\sm V)$ is therefore contained
in $X\sm U$ (by \rl{locinv2}). In particular, $\pi$ is an
isomorphism over $U$, hence over $V\subset U$.

Next, we show that for every $S\in \Irr(F)$ we have
$\ov{\pi^{-1}(c_1(S))}\cap \ov{\pi^{-1}(c_2(S))}=\emptyset$. By the definition of
$\pi:\wt{X}\to X$, the strict preimages of $\ov{c_1(S)}$ and
and $\ov{c_2(S)}$ in $\wt{X}$ do not intersect. Thus it suffices to show that every
$\ov{\pi^{-1}(c_i(S))}$ is a strict preimage of $\ov{c_i(S)}$. Since $\pi$ is an isomorphism over $U$,
it suffices to show that both $c_1(S)$ and $c_2(S)$ are contained in $U$.
But $S$ is contained in $F\subset c_1^{-1}(V)\subset c_2^{-1}(U)$ (by
\rl{quasifinite}(i)), hence $c_1(S)\subset V\subset U$  and
$c_2(S)\subset U$.

Now we are ready to show the assertion. Let
$\wt{c}:\wt{C}\to\wt{X}\times\wt{X}$ be any correspondence lifting
$c$, and denote by $\pi_C$ the corresponding morphism $\wt{C}\to
C$. We claim that $\wt{Z}:=\wt{X}\sm \pi^{-1}(V)=\pi^{-1}(X\sm V)$
is locally $\wt{c}$-invariant. Set $\wt{F}:=F(\wt{c},\wt{Z})$ and
fix $\wt{S}\in \Irr(\wt{F})$. We want to show that
$\ov{\wt{c}_1(\wt{S})}\cap \ov{\wt{c}_1(\wt{S})}=\emptyset$ (use \rco{locinv}(a)).
Observe that $\wt{F}$ equals
\[
\wt{c}_2^{-1}(\pi^{-1}(X\sm V))\cap\wt{c}_1^{-1}(\pi^{-1}(V))=
\pi_C^{-1}(c_2^{-1}(X\sm V)\cap c_1^{-1}(V))=\pi_C^{-1}(F).
\]
Therefore $\pi_C(\wt{F})$ is contained in $F$. Hence there exists
$S\in \Irr(F)$ such that $\pi_C(\wt{S})\subset S$. Then for every
$i=1,2$, we have $\pi(\wt{c}_i(\wt{S}))=c_i(\pi_C(\wt{S}))\subset
c_i(S)$. Thus $\wt{c}_i(\wt{S})\subset\pi^{-1}(c_i(S))$. Hence the
intersection $\ov{\wt{c}_1(\wt{S})}\cap \ov{\wt{c}_1(\wt{S})}$ is
contained in $\ov{\pi^{-1}(c_1(S))}\cap
\ov{\pi^{-1}(c_2(S))}=\emptyset$. Therefore $\ov{\wt{c}_1(\wt{S})}\cap \ov{\wt{c}_1(\wt{S})}=\emptyset$, as
claimed.
\end{proof}

For the applications, we will need the following version of \rp{blowup}.








\begin{Cor} \label{C:blowup}
In the situation of \re{setup}, assume that $k=\B{F}$, and that $X$ and $X^0$ are defined over $\fq$.

Then there exists an open subset $V\subset X^0$ and a blow-up $\pi:\wt{X}\to X$ which is an isomorphism over $V$,
such that both $V$ and $\pi$ are defined over $\fq$, and for every
map $\wt{c}:\wt{C}\to \wt{X}\times \wt{X}$ lifting $c$ the closed subset $\wt{X}\sm
\pi^{-1}(V)\subset \wt{X}$ is locally $\wt{c}$-invariant over $\fq$.
\end{Cor}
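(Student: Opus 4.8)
The plan is to redo the construction of \rp{blowup} so that all the data can be chosen over $\fq$. Throughout, for a closed subscheme $Y$ of $X$ (or of a blow-up of $X$) which is defined over some finite extension $\fqk/\fq$, write $Y^{\diez}$ for its \emph{$\fq$-rational hull}: the smallest $\fq$-rational closed subscheme containing $Y$, equivalently the scheme-theoretic image of $Y$ under a finite map $X_{\fqk}\to X$; set-theoretically $Y^{\diez}=\bigcup_{\si\in\Gal(\fqk/\fq)}Y^{\si}$. Note $\dim Y^{\diez}=\dim Y$, and $(\cdot)^{\diez}$ is monotone and commutes with finite unions.

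First I would prove an $\fq$-rational version of \rl{quasifinite}: non-empty open subsets $V\subseteq U\subseteq X^0$, defined over $\fq$, with $c_1^{-1}(V)\subseteq c_2^{-1}(U)$ and $U\sm V$ locally $c|_U$-invariant. This is obtained by running the iteration in the proof of \rl{quasifinite} but replacing each ``removed'' closed subset ($\ov{c_1(c_2^{-1}(X\sm U_j))}$ and $G(c|_{U_j},U_j\sm V_j)$) by its $\fq$-rational hull; since $\fq$-hulls do not change dimensions and commute with finite unions, the dimension-decreasing argument of \rl{quasifinite} (using \rco{locinv}) survives verbatim. With $F:=F(c,X\sm V)$ I then set $\C{K}:=\prod_{S\in\Irr(F)}\bigl(\C{I}_{(\ov{c_1(S)})^{\diez}}+\C{I}_{(\ov{c_2(S)})^{\diez}}\bigr)\subseteq\C{O}_X$, a finite product of $\fq$-rational ideal sheaves and hence $\fq$-rational, and take $\pi:\wt X:=\Bl_{\C{K}}(X)\to X$, a blow-up defined over $\fq$.

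Then I would verify the analogues of the steps in \rp{blowup}. (a) $\pi$ is an isomorphism over $V$: $\operatorname{supp}(\C{O}_X/\C{K})\subseteq\bigcup_S(\ov{c_2(S)})^{\diez}$, and $(\ov{c_2(S)})^{\diez}\subseteq X\sm V$ because $\ov{c_2(S)}\subseteq X\sm V$ (as $S\subseteq c_2^{-1}(X\sm V)$) and $V$ is $\fq$-rational, so that all $\Gal$-conjugates of $\ov{c_2(S)}$ remain in $X\sm V$. (b) For each $S$ the blow-up separates $(\ov{c_1(S)})^{\diez}$ and $(\ov{c_2(S)})^{\diez}$: their strict transforms $\wt Y_1^{(S)},\wt Y_2^{(S)}$ are disjoint $\fq$-rational closed subsets of $\wt X$, and $\ov{\pi^{-1}(c_i(S))}\subseteq\wt Y_i^{(S)}$. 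Granting (a) and (b), the conclusion follows as in \rp{blowup}: for a lift $\wt c$ of $c$ and $\wt Z:=\wt X\sm\pi^{-1}(V)$ one has $F(\wt c,\wt Z)=\pi_C^{-1}(F)$, so every $\wt S\in\Irr(F(\wt c,\wt Z))$ satisfies $\pi_C(\wt S)\subseteq S$ for some $S\in\Irr(F)$, whence $\ov{\wt c_i(\wt S)}\subseteq\wt Y_i^{(S)}$; given $\wt x\in\wt Z$, for each $S\in\Irr(F)$ choose $A_S\in\{\wt Y_1^{(S)},\wt Y_2^{(S)}\}$ with $\wt x\notin A_S$ (possible, the two being disjoint), and let $U':=\wt X\sm\bigcup_S A_S$. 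This is an $\fq$-rational open neighbourhood of $\wt x$, and for each $\wt S$ the corresponding $\wt c_i(\wt S)$ misses $U'$, so $F(\wt c,\wt Z)\cap\wt c_1^{-1}(U')\cap\wt c_2^{-1}(U')=\emptyset$, i.e.\ $\wt Z\cap U'$ is $\wt c|_{U'}$-invariant (Remark \re{dim}(a)). Hence $\wt Z$ is locally $\wt c$-invariant over $\fq$.

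The main obstacle is the inclusion $\ov{\pi^{-1}(c_2(S))}\subseteq\wt Y_2^{(S)}$ in (b): one needs $\ov{\pi^{-1}(c_2(S))}$ to lie in the \emph{strict} transform and not merely the total transform of $(\ov{c_2(S)})^{\diez}$, which requires $\pi$ to be an isomorphism near $c_2(S)$. For $c_1(S)$ this is automatic since $c_1(S)\subseteq V$; but $c_2(S)$ typically meets $\operatorname{supp}(\C{O}_X/\C{K})$ (e.g.\ along $\ov{c_1(S)}\cap\ov{c_2(S)}$, which after passing to $\fq$-hulls need not be contained in $X\sm U$). In \rp{blowup} the analogous difficulty is resolved because $\pi$ is an isomorphism over the larger open $U\supseteq c_2(S)$; reproducing this with an $\fq$-rational center is the technical heart of the corollary, and requires a more careful choice of $V$ (possibly shrunk further, still $\fq$-rationally) and of the $\fq$-rational center, so that $\operatorname{supp}(\C{O}_X/\C{K})$ stays inside $X\sm U$ while still separating the hulls $(\ov{c_i(S)})^{\diez}$. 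This is the point at which the hypotheses of \re{setup} ($c_2$ dominant, $\dim C=\dim X$, forcing $\dim S<\dim X$) genuinely enter; everything else is routine given \rp{blowup}, \rl{locinv}, and standard facts about blow-ups and $\fq$-rational hulls.
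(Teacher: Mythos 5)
Your approach --- reconstructing the proof of \rp{blowup} over $\fq$ by replacing the various closed subsets by their Galois orbits --- is genuinely different from the paper's, and it stalls at exactly the point you identify: after replacing $\ov{c_1(S)}$ and $\ov{c_2(S)}$ by their $\fq$-rational hulls, the intersection of those hulls picks up off-diagonal Galois terms $\si(\ov{c_1(S)})\cap\tau(\ov{c_2(S)})$ with $\si\neq\tau$, and these need not lie in $X\sm U$. This breaks the step in \rp{blowup} where one argues that $\ov{\pi^{-1}(c_i(S))}$ is the \emph{strict} (not merely total) transform of $\ov{c_i(S)}$, which required $\pi$ to be an isomorphism over all of $U$. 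You flag this as ``the technical heart'' and propose it can be fixed ``with a more careful choice of $V$ and of the $\fq$-rational center,'' but you do not carry that out, so as written the proposal does not close the gap.

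The paper's route avoids the issue entirely and is much shorter. The crucial observation is that \re{setup} and \rp{blowup} are stated over an arbitrary field $k$ --- in particular they apply with $k=\fq$. Since $X$ and $X^0$ are defined over $\fq$, pick $\fq$-models $\un{X}$, $\un{X}^0$ and let $\om:X\to\un{X}$ be the projection. The scheme $C$ need not descend to $\fq$, but one only needs \emph{some} correspondence over $\fq$: choose a model $\un{C}$ of $C$ over $\B{F}_{q^r}$, view it as a scheme of finite type over $\fq$ via $\Spec\B{F}_{q^r}\to\Spec\fq$, and observe that $(\om\times\om)\circ c$ factors through a map $\un{c}:\un{C}\to\un{X}\times\un{X}$ which satisfies the hypotheses of \re{setup} over $\fq$. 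Applying \rp{blowup} over $\fq$ gives $\un{V}$ and $\un{\pi}:\un{\wt{X}}\to\un{X}$, and $V$, $\pi$ are taken to be their base changes to $\B{F}$. Given a lift $\wt{c}$ of $c$, the composite $\wt{d}:=(\om\times\om)\circ\wt{c}$ lifts $\un{c}$, so $\un{\wt{X}}\sm\un{\pi}^{-1}(\un{V})$ is locally $\wt{d}$-invariant; pulling back $\fq$-rational open neighborhoods via $\wt{X}\to\un{\wt{X}}$ as in \rl{locinv} gives local $\wt{c}$-invariance of $\wt{X}\sm\pi^{-1}(V)$ over $\fq$. In other words, the $\fq$-rationality is automatic because the construction is simply run over $\fq$ from the outset, rather than over $\B{F}$ followed by an attempt to make each step Galois-equivariant afterward.
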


\begin{proof}
By assumption, there exists a scheme of finite type $\un{X}$ over $\fq$ and an open subscheme $\un{X}^0$ of $\un{X}$, whose base change to
$\B{F}$ are $X$ and $X^0$, respectively. Let $\om:X\to\un{X}$ be the canonical morphism.

Choose $r\in\B{N}$ such that $C$ and $c:C\to X\times X$ are defined over $\B{F}_{q^r}$. Then there exists $r\in\B{N}$ and a 
scheme of finite type $\un{C}$ over $\B{F}_{q^r}$, whose base change to $\B{F}$ is $C$, such that the composition $(\om\times\om)\circ c:C\to \un{X}\times\un{X}$ factors through $\un{c}:\un{C}\to \un{X}\times\un{X}$. Then $\un{c}$ satisfies all the assumptions of \re{setup} for $k=\fq$.

Let $\un{V}\subset\un{X}^0$ and $\un{\pi}:\un{\wt{X}}\to\un{X}$ be an open subset and a blow-up 
from \rp{blowup}, respectably, and let $V\subset X^0$ and
$\pi:\wt{X}\to X$ be their base changes to $\B{F}$. Then $V$ and $\pi$ satisfy the required properties.

Indeed, since $\wt{c}$ lifts $c$, the composition $\wt{d}:=(\om\times\om)\circ\wt{c}:\wt{C}\to\un{\wt{X}}\times\un{\wt{X}}$ lifts $\un{c}$. 
Hence, by the assumption on $\un{V}$ and $\un{\pi}$, the closed subset $\un{\wt{X}}\sm \un{p}^{-1}(\un{V})\subset \un{\wt{X}}$ is locally $\wt{d}$-invariant. Finally, since $\wt{c}$ is a lift of $\wt{d}$, we conclude as in \rl{locinv} that $\wt{X}\sm\pi^{-1}(V)\subset \wt{X}$ is locally $\wt{c}$-invariant over $\fq$.
\end{proof}


%
%
%
%
\pagebreak
\section{Geometric construction of Pink \cite{Pi}}

\begin{Emp} \label{E:pink}
{\bf The construction}. 

(a) Let $X$ be a smooth scheme of relative
dimension $d$ over a field $k$, and let  $X_{i}\subset X$, $i\in I$ be a
finite collection of smooth divisors with normal crossings.
We set $\p X:= \cup_{i\in I}X_{i}\subset X$ and $X^0:=X\sm \p X$.

(b) For every $J\subset I$, we set $X_J:=\cap_{i\in J} X_{i}$. In particular, $X_{\emptyset}=X$.
Then every $X_{J}$ is either empty, or smooth over $k$ of relative dimension
$d-|J|$. We also set $\p X_J:=\cup_{i\in I\sm J}X_{J\cup\{i\}}$, and
$X_J^0:=X_J\sm \p X_J$.

(c) Set $Y:=X\times X$, $Y_i:=X_i\times X_i$, $i\in I$, $\p Y:=\cup_{i\in I} Y_i$ and $Y^0:=Y\sm \p
Y$. We denote by $\C{K}_i:=\C{I}_{Y_i}\subset\C{O}_Y$ the sheaf
of ideals of $Y_i$, set $\C{K}:=\prod_{i\in I}\C{K}_i$, let $\wt{Y}$ be the blow-up $\Bl_{\C{K}}(Y)$, and let
$\pi:\wt{Y}\to Y$ be the projection map. Then $\pi$ is an isomorphism over $Y^0$.

(d) For every  $J\subset I$, we set $Y_J:=X_J\times X_J\subset Y$ and 
$E_J:=\pi^{-1}(Y_J)\subset \wt{Y}$, denote by $i_J$ the inclusion
$E_J\hra\wt{Y}$, and by $\pi_J$ the projection $E_J\to Y_{J}$. We
also set $\p Y_J:=\cup_{i\in I\sm J}Y_{J\cup\{i\}}$ and
$Y_J^0:=Y_J\sm \p Y_J$.  Explicitly, a point $y\in Y$ belongs to $Y_J^0$
 if and only if $y\in Y_j$ for every $j\in J$, and
$y\notin Y_j$ for every $j\in I\sm J$.
\end{Emp}

\begin{Emp} \label{E:basic}
{\bf Basic case.} (a) Assume that $X=\B{A}^I$ with coordinates
$\{x_i\}_{i\in I}$,  and let $X_i=Z(x_i)\subset
X$ (the zero scheme of $x_i$) for all $i\in I$.

(b) The product $Y=X\times X$ is the affine space
$(\B{A}^2)^I$ with coordinates
$x_i,y_i,i\in I$, and $\wt{Y}=(\wt{\B{A}^2})^m$,
where $\wt{\B{A}^2}:=\Bl_{(0,0)}(\B{A}^2)$. Explicitly,  $\wt{Y}$ is
a closed subscheme of the product $(\B{A}^2\times\B{P}^1)^I$ with
coordinates $(x_i,y_i,(a_i:b_i))_{i\in I}$ given by equations
$x_ib_i=y_ia_i$.

(c) For every $J\subset I$, the subschemes $Y_J\subset Y$ and
$E_J\subset\wt{Y}$ are given by equations $x_j=y_j=0$ for all $j\in
J$. Thus $Y_J\cong(\B{A}^2)^{I\sm J}$, $E_J\cong
(\wt{\B{A}^2})^{I-J}\times(\B{P}^1)^{J}$, and $\pi_J:E_J\to Y_J$ is the
projection $(\wt{\B{A}^2})^{I\sm J}\times(\B{P}^1)^{J}\to
(\wt{\B{A}^2})^{I\sm J}\to (\B{A}^2)^{I\sm J}$.

\end{Emp}

\begin{Emp} \label{E:loc}
{\bf Local coordinates.} Suppose that we are in the situation of \re{pink}.

(a) For every $J\subset I$, we set $\C{K}_J:=\prod_{j\in J}\C{K}_j$, and denote by $\wt{Y}_J$ the blow-up $\Bl_{\C{K}_J}(Y)$. Then we have a natural projection $\wt{Y}\to \wt{Y}_J$, which
is an isomorphism over $Y\sm(\cup_{i\in I\sm J}Y_i)$.

(b) Let $a\in X_I\subset X$ be a closed point.  Then there exists an open neighbourhood $U\subset X$ of $a$ and regular functions $\{\psi_i\}_{i\in I}$ on $U$ such that $\psi=(\psi_i)_{i\in I}$ is a smooth morphism $U\to\B{A}^I$, and $X_i\cap U$ is the scheme of zeros $Z(\psi_i)$ of $\psi_i$ for every $i\in I$.
Then $X_i\cap U$ is the schematic preimage $\psi^{-1}(Z(x_i))$ for all $i\in I$.

(c) Let  $a,b\in X_I$ be two closed points, and let $\psi_a:U_a\to \B{A}^I$
and  $\psi_b:U_b\to \B{A}^I$ be two smooth morphisms as in (b). Then $\psi:=(\psi_a,\psi_b)$
is a smooth morphism $U:=U_a\times U_b\to (\B{A}^2)^I$, which induces an isomorphism
$\wt{Y}\times_Y U\to(\wt{\B{A}^2})^I\times_{(\B{A}^{2})^I}U$.
\end{Emp}

\begin{Lem} \label{L:smooth}
In the situation of \re{pink}, for every $J\subset I$ the closed subscheme
$E_J\subset\wt{Y}$ is smooth of dimension $2d-|J|$.
\end{Lem}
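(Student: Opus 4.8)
The claim is local on $\wt{Y}$, so I would reduce to verifying smoothness of $E_J$ in a neighbourhood of each closed point of $E_J$. Since $\pi\colon\wt{Y}\to Y$ is an isomorphism over $Y^0$ and $E_J=\pi^{-1}(Y_J)$, a point of $E_J$ lying over $Y^0_{J'}$ for some $J'\supseteq J$ forces us to work near a point $a=(a_1,a_2)\in Y$ with $a\in Y_i$ exactly for $i$ in some $J'\supseteq J$; shrinking, I may assume $a_1,a_2\in X_I$ (discarding the indices $i\notin J'$, for which $Y_i$ is simply irrelevant locally). Now apply the local coordinates of \re{loc}(c): there is a smooth morphism $\psi\colon U=U_{a_1}\times U_{a_2}\to(\B{A}^2)^I$ inducing an isomorphism $\wt{Y}\times_Y U\cong(\wt{\B{A}^2})^I\times_{(\B{A}^2)^I}U$, and under this identification $E_J\times_Y U$ corresponds to the preimage of the subscheme of $(\wt{\B{A}^2})^I$ cut out by $x_j=y_j=0$ for $j\in J$, i.e. to $(E_J^{\mathrm{basic}})\times_{(\B{A}^2)^I}U$ in the notation of \re{basic}(c).

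Since $\psi$ is smooth, smoothness of $E_J$ over $k$ at the chosen point follows once I know that the ``basic'' model $E_J^{\mathrm{basic}}\subset(\wt{\B{A}^2})^I$ is smooth over $k$ of the expected dimension, and that the dimension count is compatible with the smooth base change. By \re{basic}(c), $E_J^{\mathrm{basic}}\cong(\wt{\B{A}^2})^{I\sm J}\times(\B{P}^1)^J$. The scheme $\wt{\B{A}^2}=\Bl_{(0,0)}(\B{A}^2)$ is smooth of dimension $2$ (it is covered by two affine charts, each isomorphic to $\B{A}^2$), and $\B{P}^1$ is smooth of dimension $1$; hence $E_J^{\mathrm{basic}}$ is smooth of dimension $2(|I|-|J|)+|J|=2|I|-|J|$. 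Because $\psi\colon U\to(\B{A}^2)^I$ is smooth of relative dimension $\dim U-2|I|=2d-2|I|$, the fibre product $E_J\times_Y U\to E_J^{\mathrm{basic}}$ is smooth of the same relative dimension, so $E_J\times_Y U$ is smooth over $k$ of dimension $(2|I|-|J|)+(2d-2|I|)=2d-|J|$. As the closed points of $E_J$ lying over such $a$ cover $E_J$, this proves that $E_J$ is smooth of dimension $2d-|J|$ everywhere.

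The only genuinely delicate point is the identification of $E_J\times_Y U$ with the basic model: one must check that the blow-up $\wt{Y}=\Bl_{\prod_i\C{K}_i}(Y)$ commutes with the smooth (in fact, flat) base change along $\psi$, and that the product ideal $\prod_{i\in I}\C{K}_i$ pulls back to $\prod_{i\in I}(\text{ideal of }Z(x_i,y_i))$ — this is exactly the content of \re{loc}(b),(c) together with the fact that $X_i\cap U=\psi^{-1}(Z(x_i))$, so $Y_i\cap U=\psi^{-1}(Z(x_i,y_i))$ and hence $\C{K}_i|_U=\psi^{\cdot}(\C{I}_{Z(x_i,y_i)})\cdot\C{O}_U$. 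Granting \re{loc}, the remaining steps are the routine local computations on $\wt{\B{A}^2}$ sketched above, and there is no further obstacle.
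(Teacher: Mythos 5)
Your argument is correct and follows essentially the same route as the paper: localize at a closed point of $E_J$, discard the irrelevant indices via \re{loc}(a), use the smooth chart $\psi\colon U\to(\B{A}^2)^I$ of \re{loc}(b),(c) to identify $\wt{Y}\times_Y U$ with the pullback of the basic model, and conclude from the explicit description $E_J\cong(\wt{\B{A}^2})^{I\sm J}\times(\B{P}^1)^J$ of \re{basic}(c). Your extra care about the compatibility of the blow-up with flat base change and the explicit dimension bookkeeping only makes explicit what the paper leaves implicit.
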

\begin{proof}
In the basic case
\re{basic}, the assertion follows from the explicit description in \re{basic}(c) and the observation that $\wt{\B{A}}^2$ is smooth of dimension two. Since the assertion is
local on $Y$, the general case follows from this and \re{loc}. Namely, it suffices to show that for every closed point $a\in Y$ there exists an open neighbourhood $U$ such that $\pi^{-1}(U)\cap E_J$ is smooth of dimension $2d-|J|$.

Choose $J\subset I$ such that $a\in Y_J^0$. Then $a\in Y\sm (\cup_{i\in I\sm J}Y_i)$,
thus it follows from \re{loc}(a), that we can replace $I$ by $J$, thus assuming that
$a\in Y_I=X_I\times X_I$. In this case, by \re{loc}(c), there exists an open neighbourhood $U\subset Y$ of $a$ and a smooth morphism $U\to (\B{A}^2)^I$, which induces an isomorphism $\wt{Y}\times_Y U\to(\wt{\B{A}^2})^I\times_{(\B{A}^{2})^I}U$. Thus the assertion in general follows from the basic case.
\end{proof}

\begin{Emp} \label{E:prpreim}
{\bf Notation.} For every morphism $c:C\to Y$, we denote by
$\wt{c}:\wt{C}\to\wt{Y}$ the {\em strict preimage} of $c$. Explicitly,
$\wt{C}$ is the schematic closure of $c^{-1}(Y^0)\times_Y
\wt{Y}$ in $C\times_Y\wt{Y}$. Notice that $\wt{c}$ is a closed
embedding (resp. finite),  if $c$ is such.
\end{Emp}

\begin{Emp} \label{E:pinkfq}
{\bf Set-up.} In the situation of \re{pink}, assume that $k=\B{F}$ and that $X$ and all the $X_i$'s are defined over $\fq$.
\end{Emp}

\begin{Lem} \label{L:trans}
In the situation of \re{pinkfq}, assume that the correspondence
$c:C\to X\times X$ is locally contracting near $Z:=\p X$ over $\fq$. Let
$\Gm\subset X\times X$ be the graph of $\phi_q$, and let
$\wt{\Gm}\subset \wt{Y}$ be the strict preimage of $\Gm$. Then  $\wt{c}(\wt{C})\cap\wt{\Gm}\subset\pi^{-1}(X^0\times X^0)$.
\end{Lem}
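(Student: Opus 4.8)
The plan is to work locally near a boundary point and reduce everything to the basic case \re{basic}, where the blow-up and all strict preimages are given by explicit equations. Since the statement $\wt{c}(\wt{C})\cap\wt{\Gm}\subset\pi^{-1}(X^0\times X^0)$ is closed and can be checked on a neighbourhood of every point of $\p X\times\p X$ (outside $\p Y$ the map $\pi$ is an isomorphism and $\wt{\Gm}\cap\pi^{-1}(\p Y)$ projects into $\p X\times\p X$, so a point of $\wt{c}(\wt{C})\cap\wt{\Gm}$ not in $\pi^{-1}(X^0\times X^0)$ would have to live over some $Y_i$), I would first use \re{loc}(a) to shrink the index set, then pick a point $a=(a_1,a_2)\in X_I\times X_I$, choose smooth charts $\psi_{a_1},\psi_{a_2}:U_{a_1},U_{a_2}\to\B{A}^I$ as in \re{loc}(b), and use \re{loc}(c) to get a smooth morphism $\psi:U=U_{a_1}\times U_{a_2}\to(\B{A}^2)^I$ identifying $\wt{Y}\times_Y U$ with $(\wt{\B{A}^2})^I\times_{(\B{A}^2)^I}U$. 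I would also need $a_1,a_2$ to lie on $\Gm$, i.e. $a_2=\phi_q(a_1)$ — and here I would use that both charts can be taken compatibly with the $\fq$-structure, so that $\psi_{a_2}\circ\phi_q$ and $\phi_q\circ\psi_{a_1}$ agree on the relevant locus, making $\wt\Gm$ in the chart the strict preimage of the graph of a "Frobenius-like" self-map of $\B{A}^I$.

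The heart of the matter is the contracting hypothesis. The assumption that $c$ is locally contracting near $\p X$ over $\fq$ gives, after further shrinking $U$ over $\fq$, the inclusions $c_1^{\cdot}(\C{I}_{\p X})\subset c_2^{\cdot}(\C{I}_{\p X})\cdot\C{O}_C$ and $c_1^{\cdot}(\C{I}_{\p X})^n\subset c_2^{\cdot}(\C{I}_{\p X})^{n+1}\cdot\C{O}_C$ for some $n>0$; restricting to a single $i\in I$ and pulling back the coordinate functions $x_i$ (for $c_1$) and $y_i$ (for $c_2$), this says that on $C$ the function $c_1^*(\psi_i)$ is divisible by $c_2^*(\psi_i)$, say $c_1^*(\psi_i)=h_i\cdot c_2^*(\psi_i)$, and moreover $(c_1^*\psi_i)^n$ is divisible by $(c_2^*\psi_i)^{n+1}$, so $h_i^n$ is divisible by $c_2^*(\psi_i)$. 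In the Pink chart the $i$-th $\B{P}^1$-coordinate of the strict preimage $\wt{c}$ is $(a_i:b_i)=(c_1^*\psi_i:c_2^*\psi_i)=(h_i:1)$ on the open locus where $c_2^*\psi_i\neq 0$, and extends by continuity; what the contracting condition buys is that along $E_i$ (where $c_2^*\psi_i=0$) one has $h_i=0$ as well, so the limiting value of the $i$-th coordinate of $\wt{c}(\wt{C})$ on $E_i$ is $(0:1)$.

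On the other side, the strict preimage $\wt\Gm$ of the graph of Frobenius: in the chart, $\Gm$ is cut out (to first order) by $y_i=x_i^q$ (more precisely by the pullback of such relations through the smooth chart), so on $\Gm$ we have $(a_i:b_i)=(x_i:x_i^q)=(1:x_i^{q-1})$ away from $x_i=0$, which extends to the value $(1:0)$ along $E_i$ — i.e. the $i$-th coordinate of $\wt\Gm$ on $E_i$ is $(1:0)$, the "opposite pole" from $(0:1)$. Hence if a point $\wt{y}\in\wt{c}(\wt{C})\cap\wt\Gm$ lay over some $Y_i$ (equivalently $\wt y\in E_i$ in the chart), its $i$-th $\B{P}^1$-coordinate would have to equal both $(0:1)$ and $(1:0)$, a contradiction. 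Therefore no point of the intersection lies over $\cup_i Y_i=\p Y$, so $\wt{c}(\wt{C})\cap\wt\Gm\subset\pi^{-1}(Y\sm\p Y)=\pi^{-1}(X^0\times X^0)$.

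The step I expect to be the main obstacle is making the comparison of the two limiting coordinates rigorous \emph{simultaneously over all of $I$ and through the smooth charts}: the functions $\psi_i$ are only local étale-type coordinates, not literally the $x_i$, so I must check that "$c_1^*\psi_i$ is divisible by $c_2^*\psi_i$" really does follow from the sheaf-theoretic inclusion $c_1^{\cdot}(\C{I}_{\p X})\subset c_2^{\cdot}(\C{I}_{\p X})\cdot\C{O}_C$ (it does, since $\psi_i$ generates $\C{I}_{X_i}$ locally and the containments are with respect to the full ideal $\C{I}_{\p X}=\prod_i\C{I}_{X_i}$, so one has to disentangle the product of ideals — here I would argue one index at a time after passing to the open subscheme where the other $\psi_j$, $j\neq i$, are units, exactly as \re{loc}(a) permits), and similarly that the Frobenius graph genuinely has the shape $y_i = (\text{unit})\cdot x_i^{q}$ in a chart compatible with the $\fq$-structure, using $\phi_q^{\cdot}(f)=(\varphi_q)^{\cdot}(f)^q$ as in the proof of \rl{contr}. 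Once the local normal form is pinned down, the contradiction between the coordinates $(0:1)$ and $(1:0)$ on each exceptional $\B{P}^1$ is immediate.
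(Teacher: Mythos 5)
Your reduction to the basic case \re{basic}, and your observation that $\wt{\Gm}$ lands in the chart $\{a_i\neq 0\ \forall i\}$ with $i$-th projective coordinate $(1:x_i^{q-1})\to(1:0)$ along the exceptional locus, are exactly as in the paper. The gap is in the other half of your coordinate comparison: it is \emph{not} true that the $i$-th coordinate of $\wt{c}(\wt{C})$ on $E_i$ is $(0:1)$. Your derivation of this rests on the claim that, for each individual $i$, the function $c_1^{\cdot}(\psi_i)$ is divisible by $c_2^{\cdot}(\psi_i)$ with some extra vanishing of the quotient $h_i$ along $E_i$ — but the contracting hypothesis only controls the \emph{product} ideal $\C{I}_{\p X}=\prod_i\C{I}_{X_i}$, and this does not imply anything index-by-index. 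Concretely, take $I=\{1,2\}$, $X=\B{A}^2$, $C=\B{A}^1$ with $c_1(t)=(t^3,t^{10})$, $c_2(t)=(t^{10},t)$. Then $c_1^{\cdot}(\C{I}_{\p X})=(t^{13})$, $c_2^{\cdot}(\C{I}_{\p X})=(t^{11})$, so $c$ is contracting near $\p X$ (with $n=6$), yet $c_1^{\cdot}(\psi_1)=t^3$ is not divisible by $c_2^{\cdot}(\psi_1)=t^{10}$, and the strict preimage $\wt{C}$ at $t=0$ has first coordinate $(a_1:b_1)=\lim(t^3:t^{10})=(1:0)$ — \emph{the same} as $\wt{\Gm}$, not the opposite pole $(0:1)$. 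So the per-index contradiction you are hoping for simply does not materialize. Your proposed fix — pass to the open where the $\psi_j$ with $j\neq i$ are units, as in \re{loc}(a) — cannot be applied either, because the point $\wt{y}$ you are analysing lies over $Y_I$, where \emph{all} the $\psi_j$ vanish, so no such open neighbourhood of $\wt{y}$ exists.

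What the paper does instead is to avoid decomposing over $I$ altogether. It first shows $\wt{\Gm}\subset\wt{Y}':=\{a_i\neq 0\ \forall i\}$ (so one may as well work on $\wt{C}':=\wt{c}^{-1}(\wt{Y}')$), and then argues directly with the product $x=\prod_i x_i$, $y=\prod_i y_i$, $\ov b=\prod_i(b_i/a_i)$, noting the identity $y=x\ov b$ on $\wt{Y}'$. The contracting relation $c^{\cdot}_1(x)^n=c^{\cdot}_2(x)^{n+1}\cdot f$ pulls back to $\wt{c}^{\cdot}(x)^n=\wt{c}^{\cdot}(x)^{n+1}\wt{c}^{\cdot}(\ov b)^{n+1}\wt f$ on $\wt{C}'$. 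On the dense open $C\times_Y(\wt{Y}'\times_Y Y^0)$, the function $x$ is a unit (on $\wt{Y}'\cap\pi^{-1}(Y^0)$ each $x_i$ is forced to be nonzero since $y_i=x_i\ov b_i$), so there $\wt{c}^{\cdot}(x)\cdot\wt{c}^{\cdot}(\ov b)^{n+1}\cdot\wt f=1$; because $\wt{C}'$ is the \emph{schematic} closure of that open locus, this identity persists on all of $\wt{C}'$, whence $\wt{c}^{\cdot}(x)$ is a unit on $\wt{C}'$, i.e.\ every $\wt{c}^{\cdot}(x_i)$ is nonvanishing and $\wt{c}(\wt{C})\cap\wt{Y}'\subset\pi^{-1}(Y^0)$. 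In short: the conclusion you want is correct, but it needs the all-indices-at-once argument with the product $x$ and the closure property of the strict transform; the per-index $(0:1)$ vs.\ $(1:0)$ dichotomy fails.
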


\begin{proof}
Choose a closed point $\wt{y}\in \wt{c}(\wt{C})\cap\wt{\Gm}$, and set $y=(y_1,y_2):=\pi(\wt{y})\in\Gm$. 
Then $y_2=\phi_{q^n}(y_1)$, and it remains to show that $y_1\in X^0$.

By assumption (see \re{finfld}(c)), there exists an open neighborhood $U$ of $y_1$ defined over $\fq$ such that $c|_U$ is contracting near $Z\cap U$. Then $y_2=\phi_{q^n}(y_1)\in U$, so we can replace $c$ by $c|_U$, thus assuming that the correspondence $c$ is contracting near $Z$.

Assume that $y_1\notin X^0$. Arguing as in the proof of \rl{smooth} and using \re{loc}(a),
we can assume that $y_1\in X_I$, thus also $y_2=\phi_{q^n}(y_1)\in X_I$. 
By \re{loc}(b), there exists an open neighborhood $U$ of $y_1$ and a smooth morphism $\psi:U\to\B{A}^I$ such that 
$\psi^{-1}(Z(x_i))=X_i$ for every $i\in I$. Moreover, we can assume that $U$ and $\psi$ are defined over $\fq$. 
Again $y_2=\phi_{q^n}(y_1)\in U$, thus replacing $c$ by $c|_U$ we can assume that $U=X$. 

Consider correspondence $d:=(\psi,\psi)\circ c:C\to\B{A}^I\times \B{A}^I$. Then equalities $\psi^{-1}(Z(x_i))=X_i$ imply that
$d$ is contracting near $\p\B{A}^I:=\cup_{i\in I}Z(x_i)$. Then the assertion
for $c, X$ and  $X_i$ follows from the corresponding assertion for $d, \B{A}^I$ and  $Z(x_i)$.
In other words, we can assume that we are in the basic case \re{basic}.

We denote by  $\wt{Y}'\subset\wt{Y}$ the open subscheme, given by inequalities
$a_i\neq 0$ for all $i$. The assertion now follows from the part (a) of the following claim.
\end{proof}

\begin{Cl} \label{C:basic}
(a) We have inclusions $\wt{\Gm}\subset\wt{Y}'$, and $\wt{c}(\wt{C})\cap \wt{Y}'\subset\pi^{-1}(X^0\times
X^0)$.

(b) The projection $\wt{\Gm}\to\Gm$, induced by $\pi$, is an isomorphism.
\end{Cl}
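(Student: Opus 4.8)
The plan is to work in explicit affine coordinates on $\wt Y'$, and to split the statement into the ``graph side'' --- the inclusion $\wt\Gm\subset\wt Y'$ together with (b), which involve only the blow-up and the Frobenius graph --- and the ``correspondence side'', namely $\wt c(\wt C)\cap\wt Y'\subset\pi^{-1}(X^0\times X^0)$, where the hypothesis inherited from the proof of \rl{trans}, that $c$ is contracting near $\p X=\cup_{i\in I}Z(x_i)$, will be essential. On the chart $\wt Y'=\{a_i\neq 0\ \forall i\}$ one normalises $a_i=1$, so that $\wt Y'\cong(\B{A}^2)^I$ with coordinates $(x_i,b_i)$, the equation $x_ib_i=y_ia_i$ becomes $y_i=x_ib_i$, and $\pi|_{\wt Y'}$ is the morphism $(x_i,b_i)\mapsto(x_i,x_ib_i)$. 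In particular $\pi^{-1}(Y^0)\cap\wt Y'=\{x_i\neq 0\ \forall i\}$, where $Y^0=X^0\times X^0$.

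For (b) and for $\wt\Gm\subset\wt Y'$, I would exhibit the strict transform explicitly. Since $\Gm=\{y_i=x_i^q\ \forall i\}\cong\B{A}^I$ with coordinates $t_i=x_i|_\Gm$, the assignment $(t_i)_i\mapsto\bigl(t_i,\,t_i^q,\,(1:t_i^{q-1})\bigr)_i$ defines a morphism $\sigma\colon\Gm\to\wt Y$ (the relation $t_i\cdot t_i^{q-1}=t_i^q\cdot 1$ holds) which lands in $\wt Y'$ and satisfies $\pi\circ\sigma=\iota_\Gm$, the inclusion $\Gm\hra Y$. Thus $\sigma$ is a section of the separated morphism $\Gm\times_Y\wt Y\to\Gm$, hence a closed immersion, so $\sigma(\Gm)$ is closed in $\wt Y$ and $\sigma\colon\Gm\to\sigma(\Gm)$ is an isomorphism. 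Over $Y^0$ the map $\pi$ is an isomorphism with inverse $\sigma$, so $\sigma$ carries $\Gm\cap Y^0$ onto the copy of $\Gm\cap Y^0$ inside $\wt Y$; since $\Gm\cap Y^0=\{t_i\neq 0\ \forall i\}$ is dense in $\Gm$, its $\sigma$-image is dense in $\sigma(\Gm)$, and therefore $\wt\Gm$, the schematic closure of that copy (see \re{prpreim}), equals $\sigma(\Gm)\subset\wt Y'$. In particular $\pi|_{\wt\Gm}\colon\wt\Gm\to\Gm$ is an isomorphism with inverse $\sigma$, which is (b).

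For $\wt c(\wt C)\cap\wt Y'\subset\pi^{-1}(X^0\times X^0)$, put $f_i:=c^*(x_i)$, $g_i:=c^*(y_i)\in\C{O}_C$ and $f:=\prod_if_i$, $g:=\prod_ig_i$. The ideal of $\p X=\cup_iZ(x_i)$ is $(\prod_ix_i)$, so ``$c$ contracting near $\p X$'' reads $f\in(g)$ and $f^n\in(g^{n+1})$ in $\C{O}_C$ for some $n>0$; write $f=gh$ and $f^n=g^{n+1}h'$. We may assume $C$ is reduced (the assertion is set-theoretic) and $c^{-1}(Y^0)\neq\emptyset$ (else $\wt C=\emptyset$); then $\wt C$ is integral and $\pi_C^*(f)$ is not identically zero on $\wt C$, since over $Y^0$ the morphism $\wt c$ lifts $c$ and forces all $x_i\circ\wt c$ to be nonzero. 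On $W:=\wt c^{-1}(\wt Y')$, the identity $y_i=x_ib_i$ on $\wt Y'$ and the compatibility $\pi\circ\wt c=c\circ\pi_C$ (with $\pi_C\colon\wt C\to C$) give $\pi_C^*(g)=\pi_C^*(f)\cdot s$ in $\C{O}(W)$, where $s:=\wt c^*(\prod_ib_i)$. Substituting into $\pi_C^*(f)^n=\pi_C^*(g)^{n+1}\pi_C^*(h')$ yields $\pi_C^*(f)^n\bigl(1-\pi_C^*(f)\,s^{n+1}\pi_C^*(h')\bigr)=0$ on $W$. For each point $\wt p\in W$ the local ring $\C{O}_{\wt C,\wt p}$ is a domain in which $\pi_C^*(f)\neq 0$, so cancelling $\pi_C^*(f)^n$ shows that $\pi_C^*(f)$ is a unit there; hence $f_i(\pi_C(\wt p))=x_i(\wt c(\wt p))\neq 0$ for all $i$, i.e. $\wt c(\wt p)\in\pi^{-1}(Y^0)\cap\wt Y'=\pi^{-1}(X^0\times X^0)$. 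As any point of $\wt c(\wt C)\cap\wt Y'$ is $\wt c(\wt p)$ with $\wt p\in W$, this is the desired inclusion.

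The step I expect to be the main obstacle is the last one: upgrading ``$f=\prod_if_i$ vanishes nowhere over $Y^0$'' to ``$f$ vanishes nowhere on all of $W$''. This is false for a general correspondence, and it is exactly the quantitative half $f^n\in(g^{n+1})$ of the contracting condition that makes $\pi_C^*(f)$ an honest unit once $g$ has been rewritten as $f\cdot s$ on the chart $\wt Y'$; the weaker relation $f\in(g)$ alone only shows that $\pi_C^*(f)$ and $\pi_C^*(g)$ are units or non-units simultaneously. A secondary point requiring care is that $\wt c$, being merely the strict preimage of $c$, is still compatible with the projections $\pi_C$ and $\pi$ over the open locus $W$, which is what legitimises the pullback identity $\pi_C^*(g)=\pi_C^*(f)\,s$.
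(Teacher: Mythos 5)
Your argument follows the paper's proof almost exactly: the same explicit chart $\wt Y'$ with $y_i=x_i\ov b_i$, the same explicit closed copy of $\Gm$ inside $\wt Y'$ for part (b), and for part (a) the same use of the quantitative contracting relation $f^n\in(g^{n+1})$ rewritten via $g=f\cdot s$ to produce $\pi_C^*(f)^n\bigl(1-\pi_C^*(f)s^{n+1}\pi_C^*(h')\bigr)=0$ and force $\pi_C^*(f)$ to be a unit. (The paper extends the identity $\wt c^{\cdot}(x)\cdot\wt c^{\cdot}(\ov b)^{n+1}\cdot\wt f=1$ from the dense open locus over $Y^0$ by schematic closure rather than localizing at each point, but these are interchangeable.) Your closing remark correctly identifies why the weaker relation $f\in(g)$ would not suffice.

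There is, however, one genuine slip at the very end, caused by conflating two different open sets. In the paper $Y^0:=Y\sm\cup_i(X_i\times X_i)$, which strictly contains $X^0\times X^0$; on the chart $\wt Y'$ one has $\pi^{-1}(Y^0)\cap\wt Y'=\{x_i\neq0\ \forall i\}$ (because $x_i=0$ forces $y_i=x_ib_i=0$ there), but $\pi^{-1}(X^0\times X^0)\cap\wt Y'=\{x_i\neq0\text{ and }b_i\neq0\ \forall i\}$ is smaller: the point $(x_i,b_i)=(1,0)$ maps to $(1,0)\in X^0\times X_i$. So your final inference ``all $x_i(\wt c(\wt p))\neq0$, hence $\wt c(\wt p)\in\pi^{-1}(X^0\times X^0)$'' does not follow as written; you must also check that all $y_i\neq0$ at $\pi(\wt c(\wt p))$. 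The repair is already contained in your own identity: since $s^{n+1}\pi_C^*(h')$ is the inverse of $\pi_C^*(f)$ in $\C{O}_{\wt C,\wt p}$, the element $s=\wt c^*(\prod_i b_i)$ is itself a unit there, hence $\pi_C^*(g)=\pi_C^*(f)\cdot s$ is a unit and all $y_i$ are nonzero at the image point. A related imprecision: your justification that $\pi_C^*(f)$ is not identically zero (``over $Y^0$ ... all $x_i\circ\wt c$ are nonzero'') is only valid on $W=\wt c^{-1}(\wt Y')$, again via $y_i=x_ib_i$; with the paper's $Y^0$ it is false on $\wt C$ globally, but since you only cancel at points of $W$ this is harmless once stated correctly.
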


\begin{proof}
Notice that every $\ov{b}_i:=b_i/a_i$ is a regular function on $\wt{Y}'$, and that $\wt{Y}'$ is a closed subscheme of the affine space
$(\B{A}^3)^I$ with coordinates $x_i,y_i,\ov{b}_i$, given by equations $y_i=x_i\ov{b}_i$.

Consider the closed subscheme $\wt{\Gm}'\subset\wt{Y}'$ given
by equations $y_i=x_i^q$ and $\ov{b}_i=x_i^{q-1}$. Then $\pi:\wt{Y}\to Y$
induces an isomorphism $\wt{\Gm}'\to\Gm$. In particular, the projection $\wt{\Gm}'\to\Gm$ is proper.
Since $\Gm$ is closed in $Y$ and $\pi$ is proper,
this implies that $\wt{\Gm}'$ is closed in $\wt{Y}$ and therefore $\wt{\Gm}'$ has to coincide
with $\wt{\Gm}$. This implies assertion (b) and proves the inclusion $\wt{\Gm}\subset\wt{Y}'$.

Set $\wt{C}':=\wt{c}^{-1}(\wt{Y}')\subset\wt{C}$. We have to show that $\wt{C}'\subset\wt{c}^{-1}(\pi^{-1}(X^0\times X^0))$.
Consider regular functions $x:=\prod_{i\in I}x_i, y:=\prod_{i\in I}y_i$ and  $\ov{b}:=\prod_{i\in I}\ov{b}_i$
on $\wt{Y}'$. It remains to show that both pullbacks $\wt{c}^{\cdot}(x)$ and $\wt{c}^{\cdot}(y)$ to $\wt{C}'$ are invertible.

Since $c$ is contracting near $\p X$, there exist $n>0$ and a regular function $f$ on $C$
such that $c^{\cdot}(x)^n=c^{\cdot}(y)^{n+1}\cdot f$. Taking pullbacks to $\wt{C}'$, we get the equality
\begin{equation} \label{Eq:contr0}
\wt{c}^{\cdot}(x)^n=\wt{c}^{\cdot}(y)^{n+1}\cdot \wt{f},
\end{equation}
so it remains to show that $\wt{c}^{\cdot}(x)$ is invertible on $\wt{C}'$.
Since on $\wt{Y}'$ we have the equality $y=x\cdot\ov{b}$, equation \form{contr0} can be rewritten as
\begin{equation} \label{Eq:contr}
\wt{c}^{\cdot}(x)^n=\wt{c}^{\cdot}(x)^{n+1}\cdot \wt{c}^{\cdot}(\ov{b})^{n+1}\cdot\wt{f}.
\end{equation}
We claim that $\wt{c}^{\cdot}(x)\cdot \wt{c}^{\cdot}(\ov{b})^{n+1}\cdot\wt{f}=1$ on $\wt{C}'$, which obviously implies that
$\wt{c}^{\cdot}(x)$ is invertible.

Since $\wt{C}$ is the closure of $c^{-1}(Y^0)\times_Y \wt{Y}=C\times_Y (\wt{Y}\times_Y Y^0)$, we conclude that
$\wt{C}'$ is the closure of $C\times_Y (\wt{Y}'\times_Y Y^0)\subset C\times_Y\wt{Y}'$. Thus
it suffices to show that $\wt{c}^{\cdot}(x)\cdot \wt{c}^{\cdot}(\ov{b}^{n+1})\cdot\wt{f}=1$ on $C\times_Y (\wt{Y}'\times_Y Y^0)$.
By \form{contr}, it is enough to show that $x$ is invertible on $\wt{Y}'\times_Y Y^0$, that is,
$x_i(a)\neq 0$ for every closed point  $a\in\wt{Y}'\times_Y Y^0$ and every $i\in I$.
Since $y_i=x_i\ov{b}_i$, it suffices to show that for every $i\in I$ we have either $x_i(a)\neq 0$ or $y_i(a)\neq 0$, but this follows from the definition of $Y^0$.
\end{proof}

\begin{Lem} \label{L:grfrob}
In the situation of \re{pinkfq}, let $J\subset I$, and let $\Gm_J\subset X_J\times X_J$
(resp. $\Gm^0_J\subset X^0_J\times X_J^0$ ) be the graph of
$\phi_{q}$.

Then the schematic closure $\ov{\pi_J^{-1}(\Gm^0_J)}\subset E_J$ is smooth of dimension $d$, and the
schematic preimage $\pi_J^{-1}(\Gm_J)\subset E_J$ is a schematic
union of $\ov{\pi_{J'}^{-1}(\Gm_{J'})}$ with $J'\supset J$.
\end{Lem}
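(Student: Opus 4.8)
The plan is to reduce both assertions to an explicit computation in the basic case of \re{basic}, in the spirit of the proofs of \rl{smooth} and \rl{trans}.

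First I would observe that smoothness, equidimensionality of a prescribed dimension, reducedness, and the property ``$A$ is the schematic union of closed subschemes $B_\alpha$'' (that is, $\C{I}_A=\bigcap_\alpha\C{I}_{B_\alpha}$) can all be checked on an open cover of $\wt Y$, so it suffices to work near an arbitrary closed point $\wt y\in E_J$. Put $y=(y_1,y_2):=\pi(\wt y)$. Since $\Gm_{J'}\subset\Gm_J$ whenever $J'\supset J$, every subscheme occurring in the statement lies in $\pi^{-1}(\Gm_J)$, so if $y\notin\Gm_J$ there is nothing to prove; hence assume $y_1\in X_J$ and $y_2=\phi_q(y_1)$. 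Let $K\supset J$ be the set of $i\in I$ with $y_1\in X_i$. Using that $X_K$ and all the $X_i$ are defined over $\fq$, one checks that $y\in Y^0_K$ and that for $J'\not\subset K$ the subscheme $\pi_{J'}^{-1}(\Gm_{J'})$ is empty near $\wt y$ (because $y\notin\Gm_{\{i\}}$ for $i\in J'\sm K$); so by \re{loc}(a) I may shrink $I$ to $K$ and thereby assume $y_1,y_2\in X_I$.

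Next I would introduce local coordinates as in \rl{smooth} and \rl{trans}. Since $X$ and the $X_i$ are defined over $\fq$, \re{loc}(b) provides an open $U\subset X$ through $y_1$ defined over $\fq$, together with a smooth $\fq$-morphism $\psi=(\psi_i)_{i\in I}\colon U\to\B{A}^I$ with $X_i\cap U=\psi^{-1}(Z(x_i))$; as $U$ is defined over $\fq$ it is $\phi_q$-stable, hence $y_2=\phi_q(y_1)\in U$, and by \re{loc}(c) one gets an isomorphism $\wt Y\times_Y(U\times U)\cong(\wt{\B{A}^2})^I\times_{(\B{A}^2)^I}(U\times U)$. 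The crucial point is that, because $\psi$ is defined over $\fq$, it intertwines the Frobenii ($\psi\circ\phi_q=\phi_q\circ\psi$ on $U$), so $\psi\times\psi$ carries $\Gm_J\cap(U\times U)=\{(x,\phi_q x):x\in X_J\cap U\}$ into the basic Frobenius graph $\Gm^{\B{A}}_J\subset(\B{A}^2)^I$ of $\phi_q$ on $(\B{A}^I)_J:=\bigcap_{j\in J}Z(x_j)$; under the first-projection identifications $\Gm_J\cap(U\times U)\cong X_J\cap U$ and $\Gm^{\B{A}}_J\cong(\B{A}^I)_J$ this map is just $\psi|_{X_J\cap U}$, a base change of $\psi$ along $(\B{A}^I)_J\hra\B{A}^I$, hence smooth of relative dimension $d-|I|$. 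Chasing the fibre products, one identifies $\pi_J^{-1}(\Gm_J)$ over $\pi^{-1}(U\times U)$ with the pullback of $(\pi^{\B{A}}_J)^{-1}(\Gm^{\B{A}}_J)\subset(\wt{\B{A}^2})^I$ along the smooth morphism $\psi|_{X_J\cap U}$, and similarly (schematic closure commuting with flat base change) each $\ov{\pi_{J'}^{-1}(\Gm^0_{J'})}$, $J\subset J'\subset I$, with the pullback of $\ov{(\pi^{\B{A}}_{J'})^{-1}(\Gm^{0,\B{A}}_{J'})}$. Since pullback along a smooth morphism preserves smoothness, reducedness and the property of being a schematic union, and raises dimension by $d-|I|$, the lemma is reduced to the case $X=\B{A}^I$, $d=|I|$.

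Finally I would settle the basic case. Here $E_J=(\wt{\B{A}^2})^{I\sm J}\times(\B{P}^1)^J$ and $\Gm_J=\{y_i=x_i^q:i\in I\sm J\}$ involves only the coordinates $i\in I\sm J$, so everything over $E_J$ is the product, over $i\in I\sm J$, of the one-variable picture, times the factor $(\B{P}^1)^J$. For a single blow-up $\wt{\B{A}^2}=\Bl_{(0,0)}(\B{A}^2)$, in the chart $a\neq0$ with coordinates $x,t$ and $y=xt$, the schematic preimage of $\{y=x^q\}$ is cut out by $x(t-x^{q-1})$ and so is the reduced union of the exceptional divisor $E_i$ and the strict transform $\wt{\Gm}_i=\{t=x^{q-1}\}$ of $\{y=x^q\}$, which is smooth and (cf. \rcl{basic}) equals $\ov{\pi^{-1}(\{y=x^q\}\sm\{0\})}$. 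Multiplying over $i\in I\sm J$ and by $(\B{P}^1)^J$ gives $\ov{(\pi^{\B{A}}_J)^{-1}(\Gm^{0,\B{A}}_J)}=\prod_{i\in I\sm J}\wt{\Gm}_i\times(\B{P}^1)^J$, smooth of dimension $|I|=d$, which is part (a); and $(\pi^{\B{A}}_J)^{-1}(\Gm^{\B{A}}_J)=\prod_{i\in I\sm J}(\wt{\Gm}_i\cup E_i)\times(\B{P}^1)^J$, which is reduced and decomposes as the union, over $S\subset I\sm J$, of $\prod_{i\in S}E_i\times\prod_{i\in(I\sm J)\sm S}\wt{\Gm}_i\times(\B{P}^1)^J$; since $E_i$ for $i\in S$ is the $i$-th $\B{P}^1$-factor of $E_{J\cup S}$, the $S$-th term equals $\ov{(\pi^{\B{A}}_{J\cup S})^{-1}(\Gm^{0,\B{A}}_{J\cup S})}$, so as $S$ runs over the subsets of $I\sm J$ these are exactly the pieces $\ov{(\pi^{\B{A}}_{J'})^{-1}(\Gm^{0,\B{A}}_{J'})}$ with $J\subset J'\subset I$; as all of them and their union are reduced, the schematic union is the set-theoretic one, which the same computation shows to coincide with $\bigcup_{J'\supset J}\pi_{J'}^{-1}(\Gm_{J'})$ as well, so the two readings of part (b) agree. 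The step I expect to be the main obstacle is the reduction in the third paragraph: the Frobenius graph does not pull back along $\psi\times\psi$ in the naive way (since $\psi$ is not injective), and one must see instead that $\pi_J^{-1}(\Gm_J)$ is the pullback of the basic-case scheme along the smooth morphism $\psi$ restricted to the \emph{source} copy of $X_J$; once this is set up, all the assertions follow from the elementary basic case.
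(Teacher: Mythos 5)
Your proof is correct and follows essentially the same route as the paper's: reduce to the basic case $X=\B{A}^I$ via a local smooth chart $\psi$ defined over $\fq$ (so that $\psi$ is Frobenius-equivariant and carries Frobenius graphs to Frobenius graphs), then compute explicitly one blow-up at a time. The subtlety you flag at the end is precisely what the paper's phrase ``$\psi$ induces a smooth morphism $\Gm_J\to(\Gm_J)_{\B{A}^I}$, hence smooth morphisms $\pi_J^{-1}(\Gm_J)\to\pi_J^{-1}(\Gm_J)_{\B{A}^I}$'' is packaging; your version just makes the base-change step more explicit.
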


\begin{proof}
Assume first that we are in the basic case. By \re{basic}(c),
we immediately reduce to the case $J=\emptyset$. In this case, $\ov{\pi_J^{-1}(\Gm^0)}=\wt{\Gm}$ is isomorphic to $\Gm$ (by \rcl{basic}(b)), thus it is smooth of dimension $d$. This shows the first assertion.

Next, the second assertion immediately reduces to the case $m=1$, in which it is easy. Indeed,
$\Gm\subset \B{A}^2$ is given by equation $y=x^q$, thus $\pi^{-1}(\Gm)\subset \B{A}^2\times \B{P}^1$ is given by equations $y=x^q$ and $xb=x^q a$.
Thus $\pi^{-1}(\Gm)$ equals the schematic union of $\wt{\Gm}$, given by $y=x^q$ and $b=x^{q-1}a$, and the exceptional divisor $x=y=0$.

Finally, arguing as in Lemmas \ref{L:smooth} and \ref{L:trans},  we reduce the general case to the basic case.
Namely, replacing $X$ by its open subset, we may assume that there exists a smooth map
$\psi:X\to\B{A}^I$ defined over $\fq$ such that $\psi^{-1}(Z(x_i))=X_i$ for every $i\in I$.
Then $\psi$ induces an isomorphism $\wt{Y}\isom Y\times_{(\B{A}^2)^I}(\wt{\B{A}^2})^I$.
Moreover, if we denote by $(\cdot)_{\B{A}^I}$ the objects corresponding to $\B{A}^I$ instead of $X$, then
$\psi$ induces a smooth morphism
$\Gm_{J}\to(\Gm_{J})_{\B{A}^I}$, hence smooth morphisms $\pi_J^{-1}(\Gm_J)\to \pi_J^{-1}(\Gm_J)_{\B{A}^I}$ and $\pi_J^{-1}(\Gm^0_J)\to \pi_J^{-1}(\Gm^0_J)_{\B{A}^I}$. Thus both assertions follow from the basic case.
\end{proof}

\section{Formula for the intersection number}

\begin{Emp}
{\bf Intersection theory.} (a) Let $X$ be a scheme of finite type over a field $k$. For  $i\in\B{N}$, we denote by $A_i(X)$ the group of $i$-cycles modulo rational equivalence (see \cite[1.3]{Fu}). For a closed subscheme $Z\subset X$ of pure dimension $i$, we denote by $[Z]$ its class in $A_i(X)$ (see \cite[1.5]{Fu}). For a proper morphism of schemes $f:X\to Y$,
we denote by $f_*:A_i(X)\to A_i(Y)$ the induced morphism (see \cite[1.4]{Fu}).
In particular, if $X$ is proper over $k$, then the projection $p_X: X\to \Spec k$ gives rise to the
degree map $\deg:=(p_X)_*:A_0(X)\to A_0(\Spec k)=\B{Z}$.

(b) Let $X$ be a smooth connected scheme
over $k$ of dimension $d$. For every $i$,  we set
$A^i(X):=A_{d-i}(X)$. For every $i,j$, we have the intersection
product \\ $\cap: A^i(X)\times A^j(X)\to A^{i+j}(X)$ (see
\cite[8.3]{Fu}). In particular, if $X$ is also proper,  we have the
intersection product $\cdot:=\deg\circ\cap:A_i(X)\times
A^i(X)\to\B{Z}$.

(c) For every morphism $f:X\to Y$ between smooth connected schemes, we
have a pullback map $f^*:A^i(Y)\to A^i(X)$ (see \cite[8.1]{Fu}). Moreover, if $f$ is
proper, then we have the equality $f^*(x)\cdot
y=x\cdot f_*(y)$ for every $x\in A^i(Y)$ and $y\in A_i(X)$, called
the projection formula (see \cite[Prop 8.3(c)]{Fu}).
\end{Emp}

\begin{Emp} \label{E:expb}
{\bf Example.} Let $f:X\to Y$ be a morphism between smooth connected schemes,
and let $C\subset Y$ be a closed subscheme such that both
inclusions $i:C\hra Y$ and $i':f^{-1}(C):=C\times_Y X\hra X$ are
regular imbeddings of codimension $d$. Then
$f^*([C])=[f^{-1}(C)]$.

Indeed, $f$ is a composition $X\overset{(\Id,f)}{\lra}X\times Y\overset{\pr_2}{\lra}Y$.
Since the assertion for $\pr_2$ is clear, and $(\Id,f)$ is a regular embedding,
we may assume that $f$ is a regular embedding. In this case, the
induced morphism $f^{-1}(C)\to C$ is a regular embedding as well,
so the assertion follows, for example, from \cite[Thm 6.2 (a) and Rem
6.2.1]{Fu}.
\end{Emp}

From now on we assume that $k$ is an algebraically closed field, and $l$ is a prime,
different from the characteristic of $k$.

\begin{Emp} \label{E:cycle}
{\bf The cycle map.} Let $X$ be a $d$-dimensional smooth connected scheme over $k$.

(a) Recall (see \cite{Gr}) that for every closed integral
subscheme $C\subset X$ of codimension $i$ one can associate its
class $\cl(C)\in H^{2i}(X,\ql(i))$. Namely, by the
Poincar\'e duality, $\cl(C)$ corresponds to the composition
\[
H^{2(d-i)}_c(X,\ql(d-i))\overset{\res_C}{\lra} H_c^{2(d-i)}(C,\ql(d-i))\overset{\Tr_{C/k}}{\lra} \ql
\]
of the restriction map and the trace map.
In other words, $\cl(C)\in H^{2i}(X,\ql(i))$ is characterized by the condition that
$\Tr_{X/k}(x\cup \cl(C))= \Tr_{C/k}(\res_C^*(x))$ for every $x\in H^{2(d-i)}_c(X,\ql(d-i))$.

(b) Note that $\cl(C)$ only depends on the class $[C]\in A^i(X)$ (see, for example,
\cite[Thm 6.3]{Lau}), thus $\cl$ induces a map $A^i(X)\to H^{2i}(X,\ql(i))$.
Furthermore, for every $x\in A^i(X)$ and $y\in A^j(X)$, we have an
equality $\cl(x\cap y)=\cl(x)\cup\cl(y)$ (see \cite[Cor 7.2.1]{Lau} when $X$ is quasi-projective, which suffices for the purpose of this note, or use \cite[Lem 2.1.2]{KS} in the general case).

(c) If $X$ is proper, then it follows from the
description of (a) that $\Tr_{X/k}(\cl(x))=\deg(x)$ for every
$x\in A_0(X)$.
\end{Emp}

\begin{Emp} \label{E:end}
{\bf Endomorphism of the cohomology.} Let $X_1$ and $X_2$ be smooth connected
proper schemes over $k$ of dimensions $d_1$ and $d_2$,
respectively, and set $Y:=X_1\times X_2$.

(a) Fix an element $u\in H^{2d_1}(Y,\ql(d_1))$. Then $u$ induces
a morphism $H^i(u):H^i(X_1,\ql)\to H^i(X_2,\ql)$ for every $i$.
Namely, by the Poincar\'e duality, $H^i(u)$ corresponds to the map
$H^i(X_1,\ql)\times H^{2d_2-i}(X_2,\ql(d_2))\to\ql$, which sends
$(x,y)$ to $\Tr_{Y/k}(u\cup(x\boxtimes y))$. Here we set $x\boxtimes y:=p_1^*x\cup p_2^*y\in
H^{2d_2}(X_2,\ql(d_2))$.

(b) As in (a), an element $v\in H^{2d_2}(Y,\ql(d_2))$ induces a morphism
$H^i(v):H^i(X_2,\ql)\to H^i(X_1,\ql)$. Moreover, we have $u\cup v\in H^{2(d_1+d_2)}(Y,\ql(d_1+d_2))$,
and  it was shown in \cite[Prop. 3.3]{Gr} that $\Tr_{Y/k}(u\cup v)$
equals the alternate trace
\[
\Tr(H^*(v)\circ
H^*(u)):=\sum_{i}(-1)^i\Tr(H^i(v)\circ H^i(u)).
\]
\end{Emp}

\begin{Emp} \label{E:Excycl}
{\bf Connection with the cycle map.} In the situation of \re{end}, let $C\subset
X_1\times X_2$ be a closed integral subscheme of dimension $d_2$, hence of codimension $d_1$.

(a) Then $C$ gives rise to an element $\cl(C)\in   H^{2d_1}(Y,\ql(d_1))$, hence it induces
an endomorphism $H^i([C]):=H^i(\cl(C)):H^i(X_1,\ql)\to H^i(X_2,\ql)$.
Let $(p_1,p_2):C\hra X_1\times X_2$ be the inclusion.
Denote by $(p_2)_*: H^i(C,\ql)\to H^i(X_2,\ql)$ the push-forward
map, corresponding by duality to the map $ H^i(C,\ql)\times
H^{2d_2-i}(X_2,\ql(d_2))\to\ql$, defined by  $(x,y)\mapsto
\Tr_{C/k}(x\cup p_2^*(y))$.

(b) We claim that the endomorphism
$H^i([C]):H^i(X_1,\ql)\to H^i(X_2,\ql)$ decomposes as  $H^i(X_1,\ql)\overset{(p_1)^*}{\lra}
H^i(C,\ql)\overset{(p_2)_*}{\lra}H^i(X_2,\ql)$. Indeed, by
\re{cycle}(a) and \re{end}(a), $H^i([C])$ corresponds by
duality to the map $H^i(X_1,\ql)\times H^{2d_2-i}(X_2,\ql(d_2))\to\ql$, defined by the rule
$(x,y)\mapsto \Tr_{Y/k}((x\boxtimes y)\cup\cl(C))=\Tr_{C/k}(p_1^*(x)\cup p_2^*(y))$. Thus
$H^i([C])$ decomposes as $(p_2)_*\circ (p_1)^*$.

(c) It follows from (b) that if $\Gm_f\subset X_1\times X_2$ is the
graph of the map $f:X_2\to X_1$, then $H^i([\Gm_f])$ is simply the
pullback map $f^*=H^i(f)$.

(d) Assume now that $X_1=X_2=X$ is of dimension $d$ and that both
$p_1,p_2:C\to X$ are dominant. Then $p_1$ is generically finite, and
it follows from the description of (b) and basic properties of the
trace map that $H^{2d}([C])=\deg(p_1)\Id$.
\end{Emp}

\begin{Emp} \label{E:Ex}
{\bf Example.}
In the situation of \re{end}, let $C\subset
X_1\times X_2$ be a closed integral subscheme of dimension $d_1$, and let
$f:X_2\to X_1$ be a morphism. Then $C$ defines a class $[C]\in A_{d_1}(Y)=A^{d_2}(Y)$ and a morphism
$H^i([C]):H^i(X_2,\ql)\to H^i(X_1,\ql)$ (see \re{Excycl}(a)). 

On the other hand, morphism $f$ defines a class
$[\Gm_f]\in A^{d_1}(Y)$, and
a morphism $f^*=H^i(f):H^i(X_1,\ql)\to H^i(X_2,\ql)$. Then have the equality
\begin{equation} \label{Eq:traces}
[C]\cdot[\Gm_f]=\Tr(f^*\circ H^*([C])).
\end{equation}
Indeed, since $f^*=H^*([\Gm_f])$ by \re{Excycl}(c), the right hand side of
\form{traces} equals $\Tr_{Y/k}(\cl(C)\cup\cl(\Gm_f))$ by \re{end}(b), hence to
$\Tr_{Y/k}(\cl([C]\cap[\Gm_f]))=[C]\cdot[\Gm_f]$ by \re{cycle}(b),(c).
\end{Emp}

\begin{Emp} \label{E:purity}
{\bf Purity.} Let $X$ be a smooth proper variety of dimension $d$ over $\B{F}$, defined over $\fq$.
 It is well known that $H^i(X,\ql)=0$ if $i>2d$. Moreover, by a theorem of Deligne \cite{De}, for every $i$ and
every embedding $\iota:\qlbar\hra\B{C}$,
all eigenvalues $\la$ of $\phi_q^*:H^i(X,\ql)\to H^i(X,\ql)$ satisfy $|\iota(\la)|=q^{i/2}$.
\end{Emp}

\begin{Emp} \label{E:not}
{\bf Notation.} From now on we assume that we are in the situation
of \re{pink}.  

Let $C\subset Y$ be a closed integral subscheme of dimension $d$ such that $C\cap Y^0\neq\emptyset$, and
let $\wt{C}\subset\wt{Y}$ be the strict transform of $C$.

(a) Consider cycle classes $[C]\in A_d(Y)$ and $[\wt{C}]\in A_d(\wt{Y})$. For every
$J\subset I$, schemes $Y_J$ and $E_J$ are smooth and proper over $k$, therefore we can form a cycle class $[\wt{C}]_J:=(\pi_{J})_*i_J^*[\wt{C}] \in A_{d-|J|}(Y_{J})$. In particular, we have $[\wt{C}]_J=0$,
if $X_{J}=\emptyset$.

(b) Assume in addition that $k=\B{F}$, and that $X$ and all the $X_i$'s are defined over $\fq$.

Fix $n\geq 0$. Let $\Gm=\Gm_{q^n}\subset Y=X\times X$,  $\Gm_J=\Gm_{J,q^n}\subset Y_J=X_J\times X_J$ and $\Gm^0_J\subset X^0_J\times X^0_J$ be the graphs of $\phi_{q^n}$, and let $\wt{\Gm}=\wt{\Gm}_{q^n}\subset\wt{Y}$ be the strict preimage of $\Gm$.
We denote by  $[\wt{\Gm}]\in A_d(\wt{Y})$,  $[\Gm]\in A_d(Y)$
and $[\Gm_J]\in A_{d-|J|}(Y_J)$ the
corresponding classes.
\end{Emp}

The following result is an analog of \cite[Prop.
IV.6]{Laf}.

\begin{Lem} \label{L:inters}
In the situation of \re{not}, we have the equality

\begin{equation} \label{Eq:classes}
[\wt{C}]\cdot[\wt{\Gm}]=[C]\cdot[\Gm]+
\sum_{J\neq\emptyset}(-1)^{|J|}[\wt{C}]_J\cdot[\Gm_J].
\end{equation}
\end{Lem}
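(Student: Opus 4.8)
The plan is to compute the self-intersection-type number $[\wt C]\cdot[\wt\Gm]$ on the smooth proper scheme $\wt Y$ by relating it to the corresponding number on $Y=X\times X$ via the blow-up $\pi$, and to organize the discrepancy along the exceptional locus $\p\wt Y:=\pi^{-1}(\p Y)$ using the stratification $\{E_J\}_{J\subset I}$ of Pink's construction. The key geometric input is the description of $\pi^{-1}(\Gm)$ provided by \rl{grfrob}: the schematic preimage $\pi_J^{-1}(\Gm_J)$ is the schematic union of the $\ov{\pi_{J'}^{-1}(\Gm_{J'})}$ for $J'\supset J$, and in particular $\pi^{-1}(\Gm)=\bigcup_{J}\ov{\pi_J^{-1}(\Gm^0_J)}$, each piece smooth of dimension $d$. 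This inclusion–exclusion pattern over subsets $J$ is exactly what will produce the alternating sum with signs $(-1)^{|J|}$.

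Concretely, I would proceed as follows. First, since $\pi$ is proper and an isomorphism over $Y^0$, and $C\cap Y^0\ne\emptyset$ with $C$ integral of dimension $d$, the strict transform $\wt C$ satisfies $\pi_*[\wt C]=[C]$. By the projection formula of intersection theory, $[\wt C]\cdot[\wt\Gm]=[\wt C]\cdot \pi^*[\Gm]+[\wt C]\cdot([\wt\Gm]-\pi^*[\Gm])$, and the first term equals $\pi_*[\wt C]\cdot[\Gm]=[C]\cdot[\Gm]$. So everything reduces to computing the correction term $[\wt C]\cdot([\wt\Gm]-\pi^*[\Gm])$. The second step is to identify $[\wt\Gm]-\pi^*[\Gm]$ as a cycle supported on $\p\wt Y$: since $\pi^{-1}(\Gm)$ is, as a scheme, the union of $\wt\Gm$ with components lying over $\p Y$, one has $\pi^*[\Gm]=[\wt\Gm]+(\text{excess components along }\p\wt Y)$, and \rl{grfrob} lets me resolve these excess components inductively as the pushforwards $(i_J)_*\ov{\pi_J^{-1}(\Gm^0_J)}$ for $J\ne\emptyset$, with multiplicities determined by the blow-up of the ideal $\C K=\prod\C K_i$. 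The third step is to evaluate $[\wt C]\cdot (i_J)_*[\ov{\pi_J^{-1}(\Gm^0_J)}]$: by the projection formula applied to the regular embedding $i_J\colon E_J\hra\wt Y$ and the projection $\pi_J\colon E_J\to Y_J$ (using that $E_J$ and $Y_J$ are smooth proper by \rl{smooth}), this equals $i_J^*[\wt C]\cdot [\ov{\pi_J^{-1}(\Gm^0_J)}]$ computed on $E_J$, and pushing the second factor down along $\pi_J$ to $Y_J$ — noting $\ov{\pi_J^{-1}(\Gm^0_J)}$ maps to $\Gm_J$ — gives $(\pi_J)_*i_J^*[\wt C]\cdot[\Gm_J]=[\wt C]_J\cdot[\Gm_J]$, which is exactly the term appearing in \form{classes}. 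Bookkeeping the multiplicities coming from the iterated blow-up structure (using the local coordinate description \re{basic}, \re{loc}) produces precisely the sign $(-1)^{|J|}$ — this is the heart of Pink's computation and the place where $E_J\cong(\wt{\B A^2})^{I\sm J}\times(\B P^1)^J$ enters.

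The main obstacle I expect is the multiplicity count in the second and third steps: one must show that the "defect cycle" $\pi^*[\Gm]-[\wt\Gm]$, when pushed through the strict transform $\wt C$ and reorganized via the strata $E_J$, contributes $(\pi_J)_*i_J^*[\wt C]$ with coefficient exactly $(-1)^{|J|}$ and nothing else — in particular that there is no contribution from components of $\pi^{-1}(\Gm)$ other than the $\ov{\pi_{J'}^{-1}(\Gm_{J'})}$, and that the inclusion–exclusion across nested $J'\supset J$ collapses cleanly. This is essentially a local computation on $\wt Y$, which by \re{loc}(c) reduces to the basic case $X=\B A^I$, where $\wt Y=(\wt{\B A^2})^I$ and one can compute the relevant cycle classes and their pullbacks by hand using the explicit equations $x_ib_i=y_ia_i$, $y_i=x_i^{q^n}$ of \rcl{basic} and \rl{grfrob}; the multivariable inclusion–exclusion then factors as a product over $i\in I$ of the one-variable identity on $\wt{\B A^2}$, which yields the sign $(-1)^{|J|}$. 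I would also need the compatibility of the cycle class map with pullback and intersection product, but that is recorded in \re{cycle}(b) and \re{expb}.
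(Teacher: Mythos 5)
Your overall plan is close to the paper's, and the first reduction is exactly right: write $[C]\cdot[\Gm]=[\wt C]\cdot\pi^*[\Gm]$ and $[\wt C]_J\cdot[\Gm_J]=[\wt C]\cdot(i_J)_*\pi_J^*[\Gm_J]$ via the projection formula, so the whole statement reduces to an identity of cycle classes on $\wt Y$ not involving $\wt C$ at all. But your third step contains a genuine error. You compute
$[\wt C]\cdot(i_J)_*[\ov{\pi_J^{-1}(\Gm^0_J)}]=i_J^*[\wt C]\cdot[\ov{\pi_J^{-1}(\Gm^0_J)}]$ on $E_J$, and then claim that ``pushing the second factor down along $\pi_J$'' yields $(\pi_J)_*i_J^*[\wt C]\cdot[\Gm_J]=[\wt C]_J\cdot[\Gm_J]$. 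That manipulation is not a valid use of the projection formula: to push an intersection product through $(\pi_J)_*$ you need one of the two factors to be a pullback $\pi_J^*(\cdot)$, and $[\ov{\pi_J^{-1}(\Gm^0_J)}]$ is \emph{not} $\pi_J^*[\Gm_J]$. By \rl{grfrob}, $\pi_J^*[\Gm_J]=[\pi_J^{-1}(\Gm_J)]$ (regular embeddings, \re{expb}) is the strictly larger class $\sum_{J'\supset J}[\ov{\pi_{J'}^{-1}(\Gm^0_{J'})}]$. So your step 3 is off by exactly the contributions of the deeper strata $J'\supsetneq J$, and these are not negligible: they are precisely what produces the alternating signs.

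You do acknowledge this obstacle and propose to repair it by a local multiplicity count in the basic case $\wt Y=(\wt{\B A^2})^I$, but that is not where the signs come from, and that route does not obviously close. The fix is much simpler and is purely combinatorial. Work throughout with the class $\pi_J^*[\Gm_J]$, not with $[\ov{\pi_J^{-1}(\Gm^0_J)}]$. Then $(i_J)_*\pi_J^*[\Gm_J]=\sum_{J'\supset J}[\ov{\pi_{J'}^{-1}(\Gm^0_{J'})}]$ for every $J$, and in particular for $J=\emptyset$ one gets $\pi^*[\Gm]=[\wt\Gm]+\sum_{J'\neq\emptyset}[\ov{\pi_{J'}^{-1}(\Gm^0_{J'})}]$. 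Plugging the first identity into $\sum_{J\neq\emptyset}(-1)^{|J|}(i_J)_*\pi_J^*[\Gm_J]$ and interchanging the order of summation, the coefficient of $[\ov{\pi_{J'}^{-1}(\Gm^0_{J'})}]$ becomes $\sum_{\emptyset\neq J\subset J'}(-1)^{|J|}=-1$ (since $\sum_{J\subset J'}(-1)^{|J|}=0$ for $J'\neq\emptyset$), which exactly cancels the defect $\pi^*[\Gm]-[\wt\Gm]$. So the sign $(-1)^{|J|}$ is not a blow-up multiplicity and requires no further local computation; all the geometric content is already in \rl{grfrob} and \re{expb}, and the rest is the M\"obius identity on the Boolean lattice of subsets of $I$.
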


\begin{proof}
Note that $[C]=\pi_*[\wt{C}]$. Since $[\wt{C}]_J=(\pi_J)_*i^*_J
[\wt{C}]$, we conclude from the projection formula that $[C]\cdot
[\Gm]=[\wt{C}]\cdot\pi^*[\Gm]$ and $[\wt{C}]_J\cdot
[\Gm_J]=[\wt{C}]\cdot (i_J)_*\pi_{J}^*[\Gm_J]$. Hence it suffices
to show that
\begin{equation} \label{Eq:1}
[\wt{\Gm}]=\pi^*[\Gm]+\sum_{J\neq\emptyset}(-1)^{|J|}(i_J)_*\pi_{J}^*
[\Gm_J].
\end{equation}
Note that for every $J$, scheme $X_J$ is smooth, thus the inclusion  $\Gm_J\hra Y_J$ is regular of codimension $\dim X_J=d-|J|$. On the other hand, by \rl{grfrob}, every schematic irreducible component of $\pi_J^{-1}(\Gm_J)$ is of dimension $d-|J|$, thus the inclusion
$\pi_J^{-1}(\Gm_J)\hra E_J$ is also regular of codimension $d-|J|$.
It therefore follows from example \re{expb} that $\pi_{J}^*
[\Gm_J]=[\pi_{J}^{-1}(\Gm_J)]$.

Using \rl{grfrob} again, we get that
$[\pi_{J}^{-1}(\Gm_J)]=\sum_{J'\supset J}
[\ov{\pi_{J'}^{-1}(\Gm^0_{J'})}]$, thus
\begin{equation} \label{Eq:2}
(i_J)_*\pi_{J}^*[\Gm_J]=\sum_{J'\supset J}
[\ov{\pi_{J'}^{-1}(\Gm^0_{J'})}].
\end{equation}
Applying this in the case $J=\emptyset$, we get the equality
\begin{equation} \label{Eq:3}
\pi^*[\Gm]=[\wt{\Gm}]+\sum_{J'\neq\emptyset}
[\ov{\pi_{J'}^{-1}(\Gm^0_{J'})}].
\end{equation}
Finally, equation \form{1} follows from \form{2}, \form{3} and the observation that for every $J'\neq 0$,
we have the equality $\sum_{J\subset J'}(-1)^{|J|}=0$.
\end{proof}


\begin{Cor} \label{C:est}
In the situation of \rl{inters}, assume that both projections
$p_1,p_2:C\to X$ are dominant. Then for every sufficiently large
$n$, we have $[\wt{C}]\cdot[\wt{\Gm}_{q^n}]\neq 0$, thus
$\wt{C}\cap\wt{\Gm}_{q^n}\neq\emptyset$.
\end{Cor}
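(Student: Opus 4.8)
The plan is to combine the intersection-number formula of \rl{inters} with Deligne's purity estimate \re{purity}, applied to each of the smooth proper schemes $X_J$. First I would recall from \re{Ex} that for every $J\subset I$ the term $[\wt{C}]_J\cdot[\Gm_{J,q^n}]$ can be rewritten cohomologically: since $\Gm_{J,q^n}$ is the graph of $\phi_{q^n}=(\phi_q)^n$ on $X_J$, formula \form{traces} gives
\begin{equation*}
[\wt{C}]_J\cdot[\Gm_{J,q^n}]=\Tr\bigl((\phi_q^*)^n\circ H^*([\wt{C}]_J)\bigr)=\sum_{i=0}^{2(d-|J|)}(-1)^i\Tr\bigl((\phi_q^*)^n\circ H^i([\wt{C}]_J)\bigr),
\end{equation*}
and likewise $[C]\cdot[\Gm_{q^n}]=\sum_i(-1)^i\Tr((\phi_q^*)^n\circ H^i([C]))$ on $X=X_\emptyset$. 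Substituting into \form{classes} yields exactly the alternating-sum expression \form{altsum} from the introduction.

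Next I would isolate the dominant term. By \re{Excycl}(d), since $p_1,p_2:C\to X$ are both dominant, the top-degree operator $H^{2d}([C])$ on $H^{2d}(X,\ql)\cong\ql(-d)$ is multiplication by $\deg(p_1)\neq 0$, and $\phi_q^*$ acts on $H^{2d}(X,\ql)$ by $q^d$. Hence the $J=\emptyset$, $i=2d$ summand contributes exactly $\deg(p_1)\,q^{dn}$. For the purity bound, fix an embedding $\iota:\qlbar\hra\B{C}$; by \re{purity} every eigenvalue $\la$ of $\phi_q^*$ on $H^i(X_J,\ql)$ satisfies $|\iota(\la)|=q^{i/2}$, so
\begin{equation*}
\bigl|\iota\,\Tr\bigl((\phi_q^*)^n\circ H^i([\wt{C}]_J)\bigr)\bigr|\le \dim H^i(X_J,\ql)\cdot\|H^i([\wt{C}]_J)\|\cdot q^{in/2},
\end{equation*}
where the operator norm is taken with respect to $\iota$ and is a constant independent of $n$. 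Every index $i$ appearing in a term other than the distinguished one satisfies either $J\neq\emptyset$ (so $i\le 2(d-|J|)\le 2d-2$) or $i\le 2d-1$; in all cases $q^{in/2}=O(q^{(d-1/2)n})=o(q^{dn})$.

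Therefore, summing the finitely many terms (the index set $\{J\subset I\}$ and the cohomological degrees are all finite), we obtain
\begin{equation*}
\iota\bigl([\wt{C}]\cdot[\wt{\Gm}_{q^n}]\bigr)=\iota(\deg p_1)\,q^{dn}+O\bigl(q^{(d-1/2)n}\bigr),
\end{equation*}
which is nonzero for all sufficiently large $n$ because $\deg(p_1)\neq 0$. Since intersection numbers are integers, $[\wt{C}]\cdot[\wt{\Gm}_{q^n}]\neq 0$ for $n\gg 0$, and a nonzero intersection number on the smooth proper-enough ambient space forces $\wt{C}\cap\wt{\Gm}_{q^n}\neq\emptyset$. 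I expect the only mildly delicate point to be bookkeeping: making sure that the $[\wt{C}]_J$ for which $X_J=\emptyset$ contribute zero (already noted in \re{not}(a)), that all relevant $X_J$ are smooth and proper so that \re{purity} and \re{Ex} apply, and that the operator norms of the $H^i([\wt{C}]_J)$ are genuinely $n$-independent — none of which is a real obstacle, since the geometry was arranged in the previous sections; the one genuine input is Deligne's purity theorem.
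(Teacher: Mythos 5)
Your argument matches the paper's proof exactly: convert the terms of \form{classes} to alternating Frobenius traces via \re{Ex}, isolate the leading term $\deg(p_1)q^{dn}$ via \re{Excycl}(d), and bound the remaining terms using Deligne's purity \re{purity}. One small imprecision: since semisimplicity of $\phi_q^*$ is not known, your estimate $|\iota\,\Tr((\phi_q^*)^n\circ H^i([\wt{C}]_J))|\le \dim H^i(X_J,\ql)\cdot\|H^i([\wt{C}]_J)\|\cdot q^{in/2}$ should carry a polynomial-in-$n$ factor arising from possible Jordan blocks of $\phi_q^*$ --- this is exactly what the paper supplies via \rl{bound} --- but since $n^{C}q^{(d-1/2)n}$ is still $o(q^{dn})$, your conclusion is unaffected.
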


\begin{proof}
By equation \form{traces} from \re{Ex}, we have 
$[C]\cdot[\Gm_{q^n}]=\Tr((\phi^*_q)^n\circ H^*([C]), H^*(X,\ql))$ and
$[\wt{C}]_J\cdot[\Gm_{J,q^n}]=\Tr((\phi^*_q)^n\circ H^*([\wt{C}]_J), H^*(X_J,\ql))$ for all $n$ and $J$.

By \re{Excycl}(d), we have
$H^{2d}([C])=\deg(p_1)\Id$. Therefore we conclude from a combination of Deligne's theorem (see \re{purity}) and \rl{bound} below that for large $n$ we have
$[C]\cdot[\Gm_{q^n}]\sim\deg(p_1)q^{dn}$, and
$[\wt{C}]_J\cdot[\Gm_{J,q^n}]= O(q^{2(d-|J|)n})$ for $J\neq\emptyset$.
Hence, by \form{classes}, for large $n$ we have
$[\wt{C}]\cdot[\wt{\Gm}_{q^n}]\sim\deg(p_1)q^{dn}$, thus  $[\wt{C}]\cdot[\wt{\Gm}_{q^n}]\neq 0$.
\end{proof}

\begin{Lem} \label{L:bound}
Let $a>1$, and let $A,B\in\Mat_d(\B{C})$ be such that every eigenvalue $\la$ of $A$ satisfies $|\la|\leq a$.
Then $\Tr(A^n B)$ is of magnitude $O(n^{d-1}a^n)$.
\end{Lem}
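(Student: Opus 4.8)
The plan is to prove the estimate $\Tr(A^n B) = O(n^{d-1} a^n)$ by reducing $A$ to Jordan normal form and bounding the entries of $A^n$ directly. First I would write $A = P J P^{-1}$ with $J$ in Jordan normal form, so that $\Tr(A^n B) = \Tr(J^n P^{-1} B P) = \Tr(J^n B')$ where $B' := P^{-1} B P$ is a fixed matrix independent of $n$. Since $\Tr(J^n B') = \sum_{i,j} (J^n)_{ij} (B')_{ji}$ is a finite sum of $d^2$ terms with fixed coefficients $(B')_{ji}$, it suffices to show that every entry of $J^n$ is $O(n^{d-1} a^n)$.

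Next I would reduce to a single Jordan block, since $J$ is block-diagonal and hence so is $J^n$. For a Jordan block of size $m \leq d$ with eigenvalue $\la$ (where $|\la| \leq a$), one has the explicit formula $(J^n)_{k,k+t} = \binom{n}{t}\la^{n-t}$ for $0 \le t \le m-1$ (and $0$ below the diagonal), which follows by writing the block as $\la I + N$ with $N$ nilpotent, $N^m = 0$, and expanding $(\la I + N)^n$ by the binomial theorem. Thus the absolute value of each entry is bounded by $\binom{n}{t}|\la|^{n-t} \leq \binom{n}{t} a^{n-t}$ for some $t \leq m-1 \leq d-1$.

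Finally I would bound $\binom{n}{t} a^{n-t}$. Since $a > 1$, we have $a^{n-t} \le a^n$ for $n \ge t$, and $\binom{n}{t} \le n^t \le n^{d-1}$ for $n$ large (as $t \le d-1$), so each entry of $J^n$ is indeed $O(n^{d-1} a^n)$. Combining with the previous paragraphs, $\Tr(A^n B)$ is a fixed linear combination of such entries and hence also $O(n^{d-1} a^n)$, as claimed.

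I do not expect any serious obstacle here; the only mild subtlety is making sure the hypothesis $a > 1$ is used correctly so that $a^{n-t} = O(a^n)$ uniformly (if we had $a \le 1$ the factor $a^{-t}$ would still be a harmless constant, but the stated hypothesis $a>1$ makes the bookkeeping cleanest, and it is exactly the case needed in the application where $a = q^{|J|/2} \ge \sqrt q > 1$). The argument is entirely elementary linear algebra.
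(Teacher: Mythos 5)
Your proof is correct and is exactly the paper's argument, just written out in full: the paper's proof also reduces to Jordan form and notes that all entries of $A^n$ are $O(n^{d-1}a^n)$. No issues.
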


\begin{proof}
We can assume that $A$ has a Jordan form. Then all entries of $A^n$ are of magnitude $O(n^{d-1}a^n)$.
This implies the assertion.
\end{proof}

\section{Proof of the main theorem}

\begin{Emp} \label{E:red}
{\bf Reduction steps.} (a) Assume that we are given a commutative
diagram of schemes of finite type over $\B{F}$
\begin{equation} \label{Eq:red}
\CD
        \wt{C}^0        @>{\wt{c}^0}>>         \wt{X}^0\times \wt{X}^0\\
        @V{f_C}VV                        @V{f\times f}VV\\
       C^0        @>{c^0}>>         X^0\times X^0,
\endCD
\end{equation}
such that $c^0$ and $\wt{c}^0$ satisfy the assumptions of \rt{main}, and $f:\wt{X}\to X$ is defined over $\fq$. Then
\rt{main} for $\wt{c}^0$ implies that for $c^0$.

Indeed, for every $n\geq 0$,
we have an inclusion $f_C((\wt{c}^0)^{-1}(\Gm^0_{q^n}))\subset
(c^0)^{-1}(\Gm^0_{q^n})$. Thus we have $(c^0)^{-1}(\Gm^0_{q^n})\neq\emptyset$, if
$(\wt{c}^0)^{-1}(\Gm^0_{q^n})\neq\emptyset$.

(b) By (a), for every open subset $U\subset X^0$ over $\fq$,
\rt{main} for $c^0$ follows from that for $c^0|_{U}$. Also for
every open $W\subset C^0$, \rt{main} for $c^0$ follows from that for
$c^0|_W$.

(c) Assume that the commutative diagram \form{red} satisfies
$f=\Id_{X^0}$ and $f_C$ is surjective. Then
$f_C((\wt{c}^0)^{-1}(\Gm^0_{q^n}))=(c^0)^{-1}(\Gm^0_{q^n})$, thus \rt{main}
for $c^0$ implies that for $\wt{c}^0$.

(d) For every $m\in\B{N}$, the Frobenius twist
$(c^0)^{(m)}:C^0\to X^0\times X^0$ (see \re{finfld}(a)) satisfies
$((c^0)^{(m)})^{-1}(\Gm^0_{q^n})=(c^0)^{-1}(\Gm^0_{q^{n+m}})$. Thus
\rt{main} for $c^0$ follows from that for $(c^0)^{(m)}$.

(e) Fix $r\in\B{N}$. Then every $n\in\B{N}$ has a form $n=rn'+m$ with $m\in\{0,\ldots, r-1\}$.
Thus the assertion \rt{main} for $c^0$ over $\fq$ is equivalent to the corresponding assertion for
$(c^0)^{(m)}$ over $\B{F}_{q^r}$ for $m=0,\ldots, r-1$.
\end{Emp}

\begin{Cl} \label{C:red}
It is enough to prove \rt{main} under the assumption that there exists
a Cartesian diagram
\begin{equation} \label{Eq:cred}
\CD
        C^0        @>{c^0}>>         X^0\times X^0\\
        @V{j_C}VV                        @V{j\times j}VV\\
       C        @>{c}>>         X\times X
\endCD
\end{equation}
of schemes of finite type over $\B{F}$ such that

(i) $X$ is irreducible projective, and $j$ is an open embedding, both 
defined over $\fq$.

(ii) $C$ is irreducible of dimension $\dim X$, and  $c$ is finite.

(iii) $X$ is smooth, and the complement $\p X:=X\sm j(X^0)$ is a
union of smooth divisors $X_i$ with normal crossings, defined over
$\fq$.

(iv)  The correspondence $c$ is locally contracting near $\p X$  over $\fq$.
\end{Cl}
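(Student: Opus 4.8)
The plan is to reduce \rt{main} to the situation of \rcl{red} in several steps, using the reduction principles collected in \re{red}, together with the geometric input provided by \rp{blowup}, \rco{blowup}, de Jong's theorem on alterations, and \rco{contr}.

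\emph{Step 1: reduction to $\dim C^0=\dim X^0$ and $c^0$ finite.} Since $c^0_2$ is dominant, after replacing $X^0$ by a dense open subscheme defined over $\fq$ (allowed by \re{red}(b)) and $C^0$ by the preimage, we may assume $c^0_2$ is flat, hence all fibers have dimension $\dim C^0-\dim X^0=:e\geq 0$. Pulling $c^0$ back along a sufficiently generic linear section (after embedding a quasi-projective locus of $X^0$ appropriately — note $X^0$ can be assumed quasi-projective by replacing it with an affine, hence quasi-projective, open defined over $\fq$), or more directly: intersecting $C^0$ with the preimage under $c^0_1$ of $e$ general hyperplanes defined over a finite extension of $\fq$, we cut down to an irreducible closed subscheme $C'\subset C^0$ with $\dim C'=\dim X^0$ on which both projections are still dominant. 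By \re{red}(b) we may replace $c^0$ by $c^0|_{C'}$; by \re{red}(e) we may enlarge $\fq$ to absorb the field of definition of $C'$. Next, replacing $X^0$ by an open subscheme over which $c^0_2$ is quasi-finite (possible as $\dim C^0=\dim X^0$ and $c^0_2$ dominant) and then, by \re{red}(c), replacing $C^0$ by the normalization of its image, we may assume $c^0$ is finite onto its image; taking the scheme-theoretic image we may assume $c^0$ is a closed embedding, in particular finite. This gives (ii) on the open locus, and we keep $X^0$ quasi-projective and defined over $\fq$.

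\emph{Step 2: compactify and make the boundary locally invariant.} Choose a projective compactification $X_1$ of $X^0$ defined over $\fq$ with $X^0$ open dense, and let $C_1\to X_1\times X_1$ be the closure of $c^0$; this is a correspondence lifting $c^0$ in the sense of \rn{cor}(d). Apply \rco{blowup} to $c_1:C_1\to X_1\times X_1$, $X_1^0=X^0$: we obtain an open $V\subset X^0$ and a blow-up $\pi:\wt{X}_1\to X_1$, both over $\fq$, such that $\wt{X}_1\sm\pi^{-1}(V)$ is locally $\wt{c}_1$-invariant over $\fq$ for any lift $\wt{c}_1$ of $c_1$; take $\wt{c}_1$ to be the strict transform. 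By \re{red}(b) replace $X^0$ by $V$, so now $X_1$ is replaced by $\wt{X}_1$ and the boundary $\p X_1:=X_1\sm X^0$ is locally $c_1$-invariant over $\fq$.

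\emph{Step 3: alterations to achieve normal crossings, then apply \rl{contr}.} By de Jong's theorem on alterations (applied over $\fq$), there is a proper surjective generically finite morphism $f:X\to X_1$ with $X$ smooth projective over $\fq$ such that $\p X:=f^{-1}(\p X_1)_{\mathrm{red}}$ is a union of smooth divisors $X_i$ with normal crossings, all defined over $\fq$. Let $X^0:=X\sm\p X$; then $f:X^0\to V$ is still proper surjective generically finite, and we may use \re{red}(a) to replace $c^0$ by a correspondence on $X^0$ lifting the old one — concretely, take $C:=$ (an irreducible component dominating the source of) the closure in $X\times X$ of $(f\times f)^{-1}$ of the graph, adjusting by \re{red}(b),(e) to keep $C$ irreducible of dimension $\dim X$ with $c$ finite, and defining $c:C\to X\times X$ as the closure of $c^0$. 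By \rl{locinv}, $\p X=f^{-1}(\p X_1)$ is locally $c$-invariant over $\fq$ (pull back the local invariance of $\p X_1$ along $f\times f$ and then along the lift). This secures (i), (ii), (iii) and the \emph{invariance} half of (iv).

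\emph{Step 4: pass to a Frobenius twist to get contracting.} Finally, by \re{remfinfld}, since $\p X$ is defined over $\fq$ and locally $c$-invariant over $\fq$, it is locally $c^{(n)}$-invariant over $\fq$ for every $n$. Choose $n$ with $q^n>\ram(c_2,\p X)$; by \rco{contr} (the local version of \rl{contr}), the twist $c^{(n)}$ is locally contracting near $\p X$ over $\fq$. By \re{red}(d), \rt{main} for $c^0$ follows from \rt{main} for $(c^0)^{(n)}$, which now sits in a diagram of the form \form{cred} satisfying (i)--(iv). \textbf{The main obstacle} is Step 1: keeping $C$ irreducible of the right dimension with $c$ finite (ideally a closed embedding) while all reductions remain compatible over $\fq$ — one must be careful that cutting by hyperplanes and normalizing do not destroy the hypotheses of \re{setup} needed later, and that the field of definition is controlled via \re{red}(e); the subsequent steps are essentially a bookkeeping assembly of the geometric results already proved.
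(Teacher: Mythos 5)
Your Steps 2--4 follow the paper's own reduction (compactify, apply \rco{blowup}, alter via de Jong, then Frobenius-twist and invoke \re{remfinfld} and \rco{contr}), and that part is fine modulo the usual bookkeeping with \re{red}(e). The genuine problem is your Step 1, specifically the mechanism you propose for forcing $\dim C^0=\dim X^0$. Intersecting $C^0$ with $(c^0_1)^{-1}(H_1)\cap\dots\cap(c^0_1)^{-1}(H_e)$ for hyperplanes $H_i$ produces a subscheme $C'$ with $c^0_1(C')\subset H_1\cap\dots\cap H_e$, a proper closed subset of $X^0$ of codimension $e$; so $c^0_1|_{C'}$ is \emph{not} dominant whenever $e\geq 1$, and the hypotheses of \rt{main} are destroyed rather than preserved. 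Your claim that ``both projections are still dominant'' is false for this construction. The alternative you gesture at (a generic linear section of $C^0$ itself in some projective embedding) could in principle be repaired, but it requires a Bertini irreducibility argument over the countable field $\B{F}$ plus a separate check that a single section can be chosen generic simultaneously for the two fibration structures $c^0_1$ and $c^0_2$; none of this is supplied.

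The paper avoids the issue entirely by going in the opposite direction: instead of cutting $C^0$ down, it \emph{enlarges} $X^0$. Having reduced to $C^0$ affine, one picks maps $d^0_i:C^0\to\B{A}^r$ (with $r=\dim C^0-\dim X^0$) so that $e^0_i:=c^0_i\times d^0_i:C^0\to X^0\times\B{A}^r$ is dominant for $i=1,2$, and applies \re{red}(a) with $f$ the projection $X^0\times\B{A}^r\to X^0$, which is defined over $\fq$. This keeps $C^0$ untouched, needs no Bertini, and raises no field-of-definition questions. Relatedly, your passage to ``the normalization of its image'' is not what \re{red}(c) supports: that reduction needs a surjection $f_C$ \emph{onto} the new correspondence scheme, so the correct move (as in the paper's Step 2) is to replace $c^0$ by the inclusion of a dense open $U\subset\ov{c^0(C^0)}$ contained in the image, and then shrink $X^0$ to turn the locally closed embedding into a closed one. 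As written, Step 1 does not establish the hypotheses needed for everything that follows.
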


\begin{proof}
We carry out the proof in six steps.

{\bf Step 1.} We may assume that $X^0$ is quasi-projective, and that
$\dim C^0=\dim X^0$.

Indeed, by \re{red}(b), we may replace $X^0$ and $C^0$ by their open neighborhoods,
thus assuming that $X^0$ and $C^0$ are affine. Next, since $c^0_1,c^0_2:C^0\to X^0$ are dominant, the
difference $r:=\dim C^0-\dim X^0$ is nonnegative, and there exist
maps $d^0_1,d^0_2:C^0\to\B{A}^r$ such that $e^0_i:=c^0_i\times
d^0_i:C^0\to X^0\times\B{A}^r$ is dominant for $i=1,2$. Thus by \re{red}(a), it
suffices to prove the theorem for $e^0:=(e^0_1,e^0_2)$ instead of
$c^0$, thus we may assume that $\dim C^0=\dim X^0$.

{\bf Step 2.} In addition to the assumptions of Step 1, we may assume that $c^0$ is a closed embedding.

Indeed, assume that we are in the situation of Step 1, and let $C'$ be the closure $\ov{c^0(C^0)}\subset X^0\times X^0$.
Then the image $c^0(C^0)$ contains a non-empty open subset $U\subset C'$. Then by
\re{red}(b), (c), we may replace $c^0$ by the inclusion $U\hra
X^0\times X^0$, thus assuming that $c^0$ is a locally closed
embedding. Moreover, since $\dim(C'\sm U)<\dim C^0=\dim X^0$, we can replace
$X^0$ by $X'_0:=X^0\sm (\ov{c^0_1(C'\sm U)}\cup \ov{c^0_2(C'\sm U)})$
and $c^0$ by $c^0|_{X'_0}$, thus assuming that $c^0$ is a closed
embedding.

{\bf Step 3.} We may assume that there exists a Cartesian diagram
\form{cred} satisfying (i) and (ii).

Indeed, assume that we are in the situation of Step 2.
Since $X^0$ is quasi-projective, there exists an open
embedding with dense image $j:X^0\hra X$ over $\fq$ with $X$
projective.  Let $C\subset X\times X$ be the closure of
$c^0(C^0)$, and let $c:C\hra X\times X$ be the inclusion map.
Then $c$ defines a  Cartesian diagram
\form{cred} satisfying (i) and (ii).

{\bf Step 4.} In addition to the assumptions of Step 3, we may assume that $\p X:=X\sm j(X_0)$
is locally $c$-invariant over $\fq$.

Indeed, assume that we are in the situation of Step 3. Then, by \rco{blowup}, 
there exists an open subset $V\subset X^0$ and a blow-up $\pi:\wt{X}\to X$, which is an isomorphism over $V$,
such that both $V$ and $\pi$ are defined over $\fq$ and for every
map $\wt{c}:\wt{C}\to \wt{X}\times \wt{X}$ lifting $c$, the closed subset $\wt{X}\sm
\pi^{-1}(V)\subset \wt{X}$ is locally $\wt{c}$-invariant over $\fq$.

Replacing $X^0$ by $V$ and $c^0$ by $c^0|_V$, we may assume that
$V=X^0$ (use \re{red}(b)). Let $\wt{j}$ be the inclusion $X^0\cong \pi^{-1}(X^0)\hra
\wt{X}$, let  $\wt{C}\subset \wt{X}\times \wt{X}$ be the closure of
$C^0\subset C\times_{(X\times X)}(\wt{X}\times \wt{X})$, and let
$\wt{c}:\wt{C}\to \wt{X}\times \wt{X}$ be the projection. Then, replacing $c$ by
$\wt{c}$, we get the Cartesian diagram we are looking for.

{\bf Step 5.} In addition to the assumptions of Step 4, we may assume that property (iii) is
satisfied.

Indeed, assume that we are in the situation of Step 4.
By a theorem of de Jong on alterations (see \cite[Thm 4.1 and Rem 4.2]{dJ}),
there exists a proper generically finite map $\pi:\wt{X}\to X$ such that $\wt{X}$ is smooth
and geometrically connected over $\B{F}$, and $\pi^{-1}(\p X)\subset \wt{X}$ is a union of smooth divisors
with strict normal crossings $\wt{X}_i$. Choose $r$ such that $\wt{X}$, $\wt{X}_i$ and $\pi$ are defined over
$\B{F}_{q^r}$.

By \re{red}(e), in order to prove \rt{main} for $c^0$ over $\fq$ it suffices to prove
\rt{main} over $\B{F}_{q^r}$ for $(c^0)^{(m)}$ for $m=0,\ldots, r-1$. Note that every $(c^0)^{(m)}$ satisfies
all the assumptions of Step 4. Indeed, the twist $c^{(m)}:C\to X\times X$ is finite, because $c$ and $\phi_q:X\to X$ are finite, and $c^{(m)}$ gives rise to the Cartesian diagram
we are looking for.  Thus replacing $\fq$
by its finite extension $\B{F}_{q^r}$ and $c^0$ by $(c^0)^{(m)}$, we may assume
that $\wt{X}$, $\wt{X}_i$ and $\pi$ are defined over $\fq$.

Since $c_1, c_2:C\to X$ are dominant, there exists a unique
irreducible component $\wt{C}$ of $C\times_{(X\times X)}(\wt{X}\times
\wt{X})$ such that both projections $\wt{c}_1,\wt{c}_2:\wt{C}\to\wt{X}$ are dominant.
Replacing $X^0$ by $\pi^{-1}(X^0)$ and $c$ by $\wt{c}=(\wt{c}_1,\wt{c}_2)$, we get the required Cartesian diagram.
Indeed, properties (i)-(iii) are satisfied by construction, while the locally invariance property of Step 4
is preserved by \rl{locinv}.

{\bf Step 6.} We may assume that all the assumptions of \rcl{red} are satisfied.

Assume that we are in the situation of Step 5. Choose $m\in\B{N}$
such that $q^m>\ram(c_2,\p X)$. Then  $\p X$
is locally $c$-invariant over $\fq$ by Step 4, then it is locally $c^{(m)}$-invariant, by \re{remfinfld}. 
Thus $c^{(m)}$ is locally contracting near $\p X$ by \rl{contr} over $\fq$. By \re{red}(d), we can replace
$c^0$ by $(c^0)^{(m)}$ (and $c$ by $c^{(m)}$), thus we can assume that all the assumptions of \rcl{red} are satisfied.
\end{proof}

\begin{Emp}
\begin{proof}[Proof of \rt{main}] By \rcl{red}, we can assume that
there exists a Cartesian diagram \form{cred} satisfying properties
(i)-(iv). To simplify the notation, we will identify $X^0$ with
$j(X^0)\subset X$. 
By property (iv) and \rl{trans}, we have an inclusion $\pi(\wt{c}(\wt{C})\cap\wt{\Gm}_{q^n})
\subset X^0\times X^0$ for every $n\in\B{N}$. Since $\pi_C(\wt{C})\subset C$ and  $\pi(\wt{\Gm}_{q^n})\subset\Gm_{q^n}$, we conclude that
\begin{equation} \label{Eq:incl}
\pi(\wt{c}(\wt{C})\cap\wt{\Gm}_{q^n})\subset (c(C)\cap
\Gm_{q^n})\cap (X^0\times X^0)=c^0(C^0)\cap  \Gm^0_{q^n}=c^0((c^0)^{-1}(\Gm^0_{q^n})).
\end{equation}
%

On the other hand, since $c$ is finite its image  $C':=c(C)\subset Y$ is a closed integral subscheme of dimension $d$, and
its strict preimage $\wt{C'}\subset \wt{Y}$ equals $\wt{c}(\wt{C})$. Thus, it follows from \rco{est}, applied to $C'$, that for every
sufficiently large $n\in\B{N}$ we have
$\wt{c}(\wt{C})\cap\wt{\Gm}_{q^n}\neq\emptyset$. Thus, by \form{incl},
we get $(c^0)^{-1}(\Gm^0_{q^n})\neq\emptyset$, and the proof of \rt{main} is complete.
\end{proof}
\end{Emp}

\end{document}